\newlist{enumarabic}{enumerate}{5}
\setlist[enumarabic]{label=\arabic*}
\newlist{enumarabicd}{enumerate}{5}
\setlist[enumarabicd]{label=\arabic*.}
\newlist{enumarabicp}{enumerate}{5}
\setlist[enumarabicp]{label=(\arabic*)}
\newlist{enumAlph}{enumerate}{5}
\setlist[enumAlph]{label=\Alph*}
\newlist{enumAlphd}{enumerate}{5}
\setlist[enumAlphd]{label=\Alph*.}
\newlist{enumAlphp}{enumerate}{5}
\setlist[enumAlphp]{label=(\Alph*)}
\newlist{enumalph}{enumerate}{5}
\setlist[enumalph]{label=\alph*}
\newlist{enumalphd}{enumerate}{5}
\setlist[enumalphd]{label=\alph*.}
\newlist{enumalphp}{enumerate}{5}
\setlist[enumalphp]{label=(\alph*)}
\newlist{enumRoman}{enumerate}{5}
\setlist[enumRoman]{label=\Roman*}
\newlist{enumRomand}{enumerate}{5}
\setlist[enumRomand]{label=\Roman*.}
\newlist{enumRomanp}{enumerate}{5}
\setlist[enumRomanp]{label=(\Roman*)}
\newlist{enumroman}{enumerate}{5}
\setlist[enumroman]{label=\roman*}
\newlist{enumromand}{enumerate}{5}
\setlist[enumromand]{label=\roman*.}
\newlist{enumromanp}{enumerate}{5}
\setlist[enumromanp]{label=(\roman*)}
\theoremstyle{plain}
\newtheorem{theorem}{Theorem}[section]
\newtheorem{proposition}[theorem]{Proposition}
\newtheorem{lemma}[theorem]{Lemma}
\newtheorem{claim}[theorem]{Claim}
\theoremstyle{definition}
\newtheorem{definition}[theorem]{Definition}
\theoremstyle{remark}
\newtheorem{remark}[theorem]{Remark}
\newtheorem{example}[theorem]{Example}
\newtheorem*{acknowledgement}{Acknowledgement}
\crefname{theorem}{Theorem}{Theorem}
\Crefname{theorem}{Theorem}{Theorem}
\crefname{proposition}{Proposition}{Proposition}
\Crefname{proposition}{Proposition}{Proposition}
\crefname{corollary}{Corollary}{Corollary}
\Crefname{corollary}{Corollary}{Corollary}
\crefname{lemma}{Lemma}{Lemma}
\Crefname{lemma}{Lemma}{Lemma}
\crefname{claim}{Claim}{Claim}
\Crefname{claim}{Claim}{Claim}
\crefname{fact}{Fact}{Fact}
\Crefname{fact}{Fact}{Fact}
\crefname{definition}{Definition}{Definition}
\Crefname{definition}{Definition}{Definition}
\crefname{notation}{Notation}{Notation}
\Crefname{notation}{Notation}{Notation}
\crefname{remark}{Remark}{Remark}
\Crefname{remark}{Remark}{Remark}
\crefname{example}{Example}{Example}
\Crefname{example}{Example}{Example}
\crefname{problem}{Problem}{Problem}
\Crefname{problem}{Problem}{Problem}
\crefname{answer}{Answer}{Answer}
\Crefname{answer}{Answer}{Answer}
\crefname{section}{Section}{Section}
\Crefname{section}{Section}{Section}
\crefname{assumption}{Assumption}{Assumption}
\Crefname{assumption}{Assumption}{Assumption}
\newcommand{\parenlr}[1]{\left(#1\right)}
\newcommand{\bracketlr}[1]{\left[#1\right]}
\newcommand{\abracket}[1]{\langle#1\rangle}
\newcommand{\abs}[1]{\lvert#1\rvert}
\newcommand{\norm}[1]{\lVert#1\rVert}
\newcommand{\set}[2]{\{#1\mid#2\mbox{}\}}
\newcommand{\map}[3]{#1\colon#2\to#3}
\newcommand{\restr}[2]{#1|_{#2}}
\newcommand{\id}{\mathrm{id}}
\DeclareMathOperator{\Aut}{Aut}
\DeclareMathOperator{\Image}{Im} % \Im is already defined
\DeclareMathOperator{\Ker}{Ker}
\DeclareMathOperator{\rank}{rank}
\DeclareMathOperator{\RePart}{Re} % \Re is already defined
\newcommand{\mbfL}{\mathbf{L}}
\newcommand{\mbfS}{\mathbf{S}}
\newcommand{\mbfh}{\mathbf{h}}
\newcommand{\mcalA}{\mathcal{A}}
\newcommand{\mcalB}{\mathcal{B}}
\newcommand{\mcalC}{\mathcal{C}}
\newcommand{\mcalD}{\mathcal{D}}
\newcommand{\mcalE}{\mathcal{E}}
\newcommand{\mcalF}{\mathcal{F}}
\newcommand{\mcalG}{\mathcal{G}}
\newcommand{\mcalH}{\mathcal{H}}
\newcommand{\mcalO}{\mathcal{O}}
\newcommand{\mcalP}{\mathcal{P}}
\newcommand{\mcalS}{\mathcal{S}}
\newcommand{\mcalV}{\mathcal{V}}
\newcommand{\mcalW}{\mathcal{W}}
\newcommand{\mscrF}{\mathscr{F}}
\newcommand{\mscrG}{\mathscr{G}}
\newcommand{\mscrS}{\mathscr{S}}
\newcommand{\mscrV}{\mathscr{V}}
\newcommand{\mscrW}{\mathscr{W}}
\newcommand{\mfrakS}{\mathfrak{S}}
\newcommand{\mfraks}{\mathfrak{s}}
\newcommand{\mfrakt}{\mathfrak{t}}
\newcommand{\mfraku}{\mathfrak{u}}
\newcommand{\mbbP}{\mathbb{P}}
\newcommand{\mbbX}{\mathbb{X}}
\newcommand{\Z}{\mathbb{Z}}
\newcommand{\R}{\mathbb{R}}
\newcommand{\C}{\mathbb{C}}
\newcommand{\HB}{\mathbb{H}}
\numberwithin{equation}{section}
\newcommand{\hplus}[1]{\mcalH^+(#1)}
\newcommand{\shplus}[1]{S(\hplus{#1})}
\DeclareMathOperator{\Diff}{Diff}
\DeclareMathOperator{\Iso}{Iso}
\DeclareMathOperator{\Fr}{Fr}
\DeclareMathOperator{\HAut}{HAut}
\DeclareMathOperator{\HIso}{HIso}
\DeclareMathOperator{\univ}{univ}
\DeclareMathOperator{\pt}{pt}
\DeclareMathOperator{\Herm}{Herm}
\DeclareMathOperator{\Gr}{Gr}
\newcommand{\Diffplus}{\Diff^+(X)}
\newcommand{\Diffspin}{\Diff^+(X, \mfraks)}
\newcommand{\BDiffplus}{B\Diffplus}
\newcommand{\EDiffplus}{E\Diffplus}
\newcommand{\Xunivplus}{\mbbX^+_{\univ}}
\newcommand{\tiota}{\tilde\iota}
\newcommand{\tf}{\tilde f}
\newcommand{\tmfrakt}{\tilde\mfrakt}
\newcommand{\tmcalD}{\tilde\mcalD}
\newcommand{\tmcalE}{\tilde\mcalE}
\newcommand{\tmcalV}{\tilde \mcalV}
\newcommand{\tmcalW}{\tilde\mcalW}
\newcommand{\tmcalF}{\tilde \mcalF}
\newcommand{\tg}{\tilde g}
\newcommand{\pprime}{{\prime\prime}}
\begin{document}

\title{A gerbe-like construction in gauge theory}
\author[Mitsuyoshi Adachi]{Mitsuyoshi Adachi}
\address{Graduate School of Mathematical Sciences, the University of Tokyo, 3-8-1 Komaba, Meguro, Tokyo 153-8914, Japan}
\email{adachi-mitsuyoshi302@g.ecc.u-tokyo.ac.jp}

\begin{abstract}
  In 2022 Baraglia and Konno showed the following: for a smooth family of a homotopy $K3$ surface $X \to \mathbb{X} \stackrel{\pi}{\to} B$, if the tangent bundle along the fibers $T_B \mathbb{X}$ admits a spin structure, then $\mathcal{H}^+(\mathbb{X})$ also admits a spin structure, where $\mathcal{H}^+(\mathbb{X})$ is the vector bundle consisting of self-dual harmonic 2-forms. In this paper, we show that $T_B \mathbb{X} \oplus \pi^\ast \mathcal{H}^+(\mathbb{X})$ admits a canonical spin structure. The proof is carried out by canonically constructing a lifting $O(1)$-gerbe for the spin structure on $\mathcal{H}^+(\mathbb{X})$ using the families Seiberg--Witten equations, starting from a lifting $O(1)$-gerbe for the spin structure on $T_B \mathbb{X}$.
\end{abstract}

\maketitle

\tableofcontents

\section{Introduction}\label{sec1:intro}

Gauge theory is a powerful tool for studying the differential topology of 4-manifolds. The Seiberg--Witten theory utilizes non-linear partial differential equations called the Seiberg--Witten equations, which are introduced by Witten\cite{Witten-monopoles-1994}. The Seiberg--Witten invariants, important numerical invariants of 4-manifolds obtained from the Seiberg--Witten theory, assign integers to closed, oriented 4-manifolds equipped with a spin$^c$ structure. The computation of the Seiberg--Witten invariants has revealed that there exist countably infinitely many closed, oriented 4-manifolds that are homeomorphic but not diffeomorphic to a $K3$ surface.

Gauge theory is also applicable to the study of smooth families of 4-manifolds (fiber bundles whose fibers are smooth 4-manifolds $X$ and whose structure group is $\Diffplus$; see \cref{rem1:smooth fam} for the precise formulation) (\cite{Ruberman-an-obstruction1998,Ruberman-polynomial-invarint-of-diffeo-1999,Liu-family-blow-up2000,Ruberman-positive-scalar-curvature-2001,Li--Liu-family-wall-crossing2001,Nakamura-family-SW2003,Szymik-Characteristic-cohomotopy-classes-family-2010,Baraglia-Obstructions-to-smooth-actions-2019,Baraglia--Konno-gluing2020,Kronheimer--Mrowka-Dehn-twist-K3-2020,Baraglia-Constraints-on-families-2021,Kato--Konno--Nakamura-Rigidity-mod2-family-SW-2021,Konno-char-class2021,konno--taniguchi2021groups,Baraglia--Konno-2022,Konno--Nakamura-constraints2023,Konno--Lin-2022-arXiv,Lin-nonsymplectic-loops-2022-arXiv,baraglia-mod-2023-arxiv}). This was initiated by Ruberman\cite{Ruberman-an-obstruction1998,Ruberman-polynomial-invarint-of-diffeo-1999,Ruberman-positive-scalar-curvature-2001}.

One of the objects of study in investigating a smooth family $X \to \mbbX \to B$ of 4-manifolds is the real vector bundle $\hplus{\mbbX} \to B$ associated with $\mbbX$. This is constructed when fixing a continuous family of Riemannian metrics on $\mbbX$. The fiber of $\hplus{\mbbX}$ at each point $b \in B$ is the vector space of self-dual harmonic 2-forms on $\mbbX_b$. The purpose of this paper is to examine the additional structures that can be constructed on $\hplus{\mbbX}$ when $X$ is a homotopy $K3$ surface.

\subsection{Main results}\label{ssec1:main res}

We first state the main results. Here, we present one of the central results, and the remaining results are deferred to \cref{sec2:main thms}.

\begin{theorem}\label{thm1:spin on TX+H^+}
  Let $X$ be a homotopy $K3$ surface and $B$ a CW complex. Consider a smooth family of $X$
  \[
    X \to \mbbX \stackrel{\pi}{\to} B
  \]
  associated to a principal $\Diffplus$-bundle $\mcalE \to B$. Choose a continuous family of smooth Riemannian metrics on $\mbbX$. Fix an orientation on $\hplus{\mbbX}$. Then, we can construct a canonical spin structure on the vector bundle over $\mbbX$
  \[
    T_B \mbbX \oplus \pi^\ast \hplus{\mbbX}
  \]
  where
  \[
    T_B \mbbX \to \mbbX
  \]
  denotes the tangent bundle along the fibers.
\end{theorem}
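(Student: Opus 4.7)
The plan is to recast the problem in the language of $O(1)$-gerbes (equivalently, $B\mathbb{Z}/2$-bundles). To any oriented real vector bundle $V \to Y$ I attach its lifting gerbe $\mathcal{G}_V$, whose (global) trivializations are in bijection with spin structures on $V$ and whose Dixmier--Douady class is $w_2(V) \in H^2(Y;\mathbb{Z}/2)$. Since $\mathcal{G}_{V \oplus W} \simeq \mathcal{G}_V \otimes \mathcal{G}_W$ (and $O(1)$-gerbes are $2$-torsion), producing a canonical spin structure on $T_B \mbbX \oplus \pi^\ast \hplus{\mbbX}$ is equivalent to producing a canonical isomorphism of gerbes on $\mbbX$:
\[
  \Phi \colon \mathcal{G}_{T_B \mbbX} \xrightarrow{\;\sim\;} \mathcal{G}_{\pi^\ast \hplus{\mbbX}}.
\]
The orientation on $\hplus{\mbbX}$ enters here in that $\mathcal{G}_{\pi^\ast \hplus{\mbbX}}$ is only well-defined once the target is oriented.

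The construction of $\Phi$ proceeds locally, exploiting Baraglia--Konno as a black box over opens where $T_B \mbbX$ is spinnable. I would choose an open cover $\{U_i\}$ of $B$ such that $T_B \mbbX|_{\pi^{-1}(U_i)}$ admits a spin structure (which exists because, upon restriction to a contractible enough neighborhood, $w_2$ vanishes). Pick spin structures $\mathfrak{s}_i^T$ on each restriction; by Baraglia--Konno applied in families, the $\mathfrak{s}_i^T$ canonically determine spin structures $\mathfrak{s}_i^H$ on $\hplus{\mbbX}|_{U_i}$, where the canonicity rides on the families Seiberg--Witten equations (concretely, on the determinant line of the Dirac index bundle of the associated family of spin$^c$ structures, combined with the chosen orientation of $\hplus{\mbbX}$). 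The pair $(\mathfrak{s}_i^T, \mathfrak{s}_i^H)$ trivializes $\mathcal{G}_{T_B \mbbX} \otimes \mathcal{G}_{\pi^\ast \hplus{\mbbX}}$ over $\pi^{-1}(U_i)$, and equivalently provides a local isomorphism $\Phi_i$ between the two lifting gerbes.

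The remaining task---and the main obstacle---is to verify that the family $\{\Phi_i\}$ satisfies the $O(1)$-gerbe cocycle condition on double overlaps, i.e.\ that over $\pi^{-1}(U_i \cap U_j)$ the two spin structures $\mathfrak{s}_i^T$ and $\mathfrak{s}_j^T$ differ by a class $\alpha_{ij} \in H^1(\pi^{-1}(U_{ij});\mathbb{Z}/2)$ which coincides with the corresponding difference $\mathfrak{s}_i^H - \mathfrak{s}_j^H$. This is precisely the statement that the Baraglia--Konno assignment is $\mathbb{Z}/2$-equivariant under the $H^1(\,\cdot\,;\mathbb{Z}/2)$-action twisting spin structures. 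To establish this equivariance I would trace through the effect of twisting $\mathfrak{s}^T$ by a real line bundle $L$ on the associated family spin$^c$ Dirac operator $D_{\mathfrak{s}_c}$: twisting shifts $D$ by tensoring with $L$, and the induced change on $\det\operatorname{ind} D$ matches, via the $K3$ index calculation ($\indspinc = 2$ and $b^+ = 3$) and the orientation formula for the Seiberg--Witten moduli space, exactly the change $\alpha_{ij}$ imposes on the spin structure of $\hplus{\mbbX}$. Once this local-to-global compatibility is in place, the gluing of the $\Phi_i$ produces the desired $\Phi$, hence the canonical spin structure on the sum; triple-overlap compatibility is automatic since we work with $\mathbb{Z}/2$-coefficients and the comparison is already an equality of $1$-cocycles.
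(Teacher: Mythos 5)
Your gerbe-theoretic framing is the right one---the paper itself rephrases the theorem as a canonical isomorphism of lifting $O(1)$-gerbes (cf.\ \cref{rem5:gerbe}), and its proof of \cref{thm1-0:spin on TX+H^+} follows exactly your local-trivialize-and-glue scheme. The genuine gap is in the step you delegate to Baraglia--Konno. Their result (\cref{ref-thm1:w_2=0}, and more generally \cite[Theorem 1.3]{Baraglia--Konno-2022}) is a computation of Steenrod squares of families Seiberg--Witten invariants; over an open set where $T_B\mbbX$ is spin it yields only $w_2(\hplus{\mbbX})=0$, i.e.\ the \emph{existence} of an isomorphism class of spin structures, not an actual spin structure $\mfraks_i^H$ canonically and functorially attached to $\mfraks_i^T$. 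Producing that assignment is the main content of the paper (\cref{thm1:spin on H^+}(1)--(3), proved in Sections 3--5), and it cannot be read off from ``the determinant line of the Dirac index bundle'': that line bundle lives over $U_i$, whereas a spin structure on the rank-$3$ bundle $\restr{\hplus{\mbbX}}{U_i}$ requires (via \cref{prop2:spin from triple}) a complex line bundle over the sphere bundle $\shplus{\restr{\mbbX}{U_i}}$ whose $c_1$ generates $H^2$ of each fiber, together with an anti-linear lift $\tiota$ of the antipodal map with $\tiota^2=-1$. Building such a triple is where the $Pin(2)$-equivariant finite-dimensional approximation of the Seiberg--Witten map perturbed over $\shplus{\mbbX}$, the associated families of Clifford bundles and Fredholm operators, and the independence-of-choices arguments (stabilization, projections, cut-offs) all enter; none of this is available as a black box.

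Consequently the overlap check cannot be carried out as you describe either. The $\Z/2$-equivariance of the assignment $\mfraks^T\mapsto\mfraks^H$ under twisting is, in the paper, packaged as functoriality plus the statement that the deck transformation $-1$ of a lift $\tmcalE$ to a principal $\Diffspin$-bundle goes to the deck transformation $-1$ of the resulting spin structure (\cref{thm1:spin on H^+}(2),(3)); this is what makes the local isomorphisms independent of the choice of local lift and forces the cocycle condition. It is proved by tracing the action of $j\in Pin(2)$ through the determinant-line construction (yielding $\tiota^2=-1$), not by an index count such as $\indspinc=2$, which is blind to the $\Z/2$ in question. So your outline is an accurate skeleton of the argument, but the two loads you place on Baraglia--Konno---canonicity of $\mfraks_i^H$ and equivariance of the assignment---are precisely the two things their theorem does not supply and that the paper must construct from scratch.
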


It is well-known that $\hplus{\mbbX}$ is orientable when $X$ is a homotopy $K3$ surface. In \cref{ssec1:prev research}, we review the proof since it is a toy model of \cref{thm1:spin on TX+H^+}.

The statement of \cref{thm1:spin on TX+H^+} is stronger than merely specifying the isomorphism class of the spin structure. To the author's knowledge, this is the first work that constructs a canonical geometric object through gauge theory.

\cref{thm1:spin on TX+H^+} can be rephrased using the notion of a gerbe. The explanation of gerbes is given in \cref{rem6:gerbe}. The important point is that an $O(1)$-gerbe $\mscrG_{\mcalE}$ appears as an obstruction to lifting the principal $\Diffplus$-bundle
\[
  \mcalE \to B
\]
to a principal $\Diffspin$-bundle. (The definition of $\Diffspin$ is given in \cref{def1:str grp}.) From the viewpoint of $O(1)$-gerbes, \cref{thm1:spin on TX+H^+} can be rephrased as follows:
\begin{itemize}
  \item From the information of $\mscrG_{\mcalE}$, another $O(1)$-gerbe $\mscrG_{\hplus{\mbbX}}$ is canonically constructed. $\mscrG_{\hplus{\mbbX}}$ is the obstruction for $\hplus{\mbbX}$ to admit a spin structure.
  \item $\mscrG_{\mcalE}$ and $\mscrG_{\hplus{\mbbX}}$ are canonically isomorphic as $O(1)$-gerbes.
\end{itemize}

\subsection{Comparison with previous work}\label{ssec1:prev research}

We introduce two previous results that form the background of \cref{thm1:spin on TX+H^+}. First, we present a result essentially due to Morgan--Szab\'{o}\cite{Morgan--Szabo-homotopy-K3-1997}.

\begin{theorem}[{Morgan--Szab\'{o}\cite[Theorem 1.1]{Morgan--Szabo-homotopy-K3-1997}}]\label{ref-thm1:MS}
  Let $X$ be a homotopy $K3$ surface and $X \to \mbbX \to B$ a smooth family of $X$ over a CW complex $B$. Choose a continuous family of smooth Riemannian metrics on $\mbbX$. Then, $\hplus{\mbbX}$ has a canonically determined orientation as a vector bundle.
\end{theorem}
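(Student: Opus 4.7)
The plan is to use the family Seiberg--Witten equations associated with the spin structure on each fiber to pin down a canonical orientation of $\hplus{\mbbX}$. Since a homotopy $K3$ surface $X$ is simply connected with $w_2(X) = 0$, it carries a unique spin structure, hence a canonical $\mathrm{spin}^c$ structure $\mathfrak{s}_0$ with $c_1(\mathfrak{s}_0) = 0$. Because $H^1(X;\mathbb{Z}/2) = 0$, this data is determined up to canonical isomorphism on each fiber, so locally over $B$ one obtains a family $\mathrm{spin}^c$ structure $\mathfrak{s}_0$.

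Over a sufficiently small open $U \subset B$, consider the linearization $\mathcal{L}$ of the family Seiberg--Witten map at the reducible locus $\phi = 0$. It decouples into the family Dirac operator $D_{\mathfrak{s}_0}$ acting on positive spinors and the de Rham--type operator $d^{\ast} \oplus d^{+} \colon \Omega^1 \to \Omega^0 \oplus \Omega^{+}$. Using the canonical identifications $H^0 \cong \mathbb{R}$ and $H^1 = 0$, the determinant line bundle of $\mathcal{L}$ factorizes over $U$ as
\[
    \det(\mathcal{L}) \;\cong\; \det_{\mathbb{R}}\!\bigl(\ind_{\mathbb{C}} D_{\mathfrak{s}_0}\bigr) \otimes \det\!\bigl(\hplus{\mbbX}|_U\bigr)^{-1}.
\]
Because $\mathfrak{s}_0$ comes from a spin structure, $S^{+}$ carries a quaternionic structure $j$ anticommuting with complex multiplication and intertwining $D_{\mathfrak{s}_0}$, so $\ind_{\mathbb{C}} D_{\mathfrak{s}_0}$ is a quaternionic virtual bundle, and $\det_{\mathbb{R}}(\ind_{\mathbb{C}} D_{\mathfrak{s}_0})$ carries a canonical orientation. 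Meanwhile, $j$ extends to an involution of the full configuration space---negating $\mathrm{spin}^c$ connections and acting as $j$ on spinors---promoting $\mathcal{L}$ to a $\mathrm{Pin}(2)$-equivariant operator, which in turn endows $\det(\mathcal{L})$ with a canonical orientation independent of $\hplus{\mbbX}$. Matching the two orientations selects a canonical orientation on $\det(\hplus{\mbbX}|_U)$, and hence on $\hplus{\mbbX}|_U$.

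Globally, since the fiberwise spin structure is unique, the local orientation is independent of the choice of local $\mathrm{spin}^c$ lift, and by contractibility of the space of fiberwise Riemannian metrics it is independent of the local family metric up to continuous deformation; the local orientations therefore glue to a canonical global orientation on $\hplus{\mbbX}$. The main obstacle lies in the middle step: justifying that the $\mathrm{Pin}(2)$-equivariance of $\mathcal{L}$ genuinely canonically orients $\det(\mathcal{L})$. The Dirac summand is quaternionic and hence canonically oriented, but the de Rham summand carries only an involutive $j$-action (induced by complex conjugation of the determinant $U(1)$-bundle of $\mathfrak{s}_0$); one must verify that the combined $\mathrm{Pin}(2)$-structure is strong enough to trivialize $\det(\mathcal{L})$ up to positive scalars without invoking an a priori orientation of $\hplus{\mbbX}$, effectively promoting $\mathrm{Pin}(2)$-equivariance into a canonical trivialization of the virtual index.
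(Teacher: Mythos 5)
Your argument has a genuine gap at precisely the step you flag, and that step cannot be closed within the purely linear framework you set up. On the form part of the configuration space the $Pin(2)$-action is: $U(1)$ acts trivially and $j$ acts by $-1$ on $i\R$-valued forms. Hence the index of $d^{\ast}\oplus d^{+}$, viewed as a virtual $Pin(2)$-equivariant bundle, carries exactly the same information as the plain real virtual bundle $-\underline{\R}-\hplus{\mbbX}|_U$: the action through $Pin(2)\to\{\pm1\}$ is determined by the bundle and orients nothing. After cancelling the quaternionic Dirac summand (which is indeed canonically oriented), the assertion that $Pin(2)$-equivariance canonically orients $\det(\mathcal{L})$ is therefore \emph{equivalent} to the assertion that $\det(\hplus{\mbbX})$ is canonically oriented --- which is the statement to be proved. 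The linearization at the reducible is a linear object built from $\ind_{\C}D$ and $\hplus{\mbbX}$ and cannot produce an orientation of the latter out of nothing; some nonlinear gauge-theoretic input specific to the homotopy $K3$ surface is indispensable, and your proposal never invokes one.

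The paper's route supplies exactly that input and is much shorter: by Morgan--Szab\'o the Seiberg--Witten invariant of a homotopy $K3$ surface for its spin structure is odd, in particular nonzero. Since $H^1(X;\R)=0$, the only choice entering the sign of this integer invariant is an orientation of $\hplus{\mbbX_b}$ (the Dirac contribution to the determinant line being canonically oriented by its complex structure), so one declares the canonical orientation on each fiber to be the one making the invariant positive; continuity of the invariant in $b$ then makes this a well-defined orientation of the vector bundle $\hplus{\mbbX}$. If you want to salvage your outline, the missing ingredient is precisely this nonvanishing: the orientation of $\det(\mathcal{L})$ is fixed not by equivariance but by requiring the signed count of irreducible solutions of the (nonlinear, suitably perturbed) equations to agree in sign with its mod $2$ reduction, which Morgan--Szab\'o show is $1$.
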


The original statement of Morgan--Szab\'{o}\cite[Theorem 1.1]{Morgan--Szabo-homotopy-K3-1997} is that the Seiberg--Witten invariant of a homotopy $K3$ surface for a spin structure is odd. Thus, the orientation of $\hplus{\mbbX}$ is determined so that the Seiberg--Witten invariant for the spin structure on each fiber is positive.

\cref{ref-thm1:MS} implies that $w_1(\hplus{\mbbX})$ vanishes. For $w_2(\hplus{\mbbX})$, there is a result by Baraglia--Konno\cite{Baraglia--Konno-2022}.

\begin{theorem}[{Baraglia--Konno\cite[Corollary 4.21]{Baraglia--Konno-2022}}]\label{ref-thm1:w_2=0}
  Let $X$ be a homotopy $K3$ surface. Consider a smooth family $X \to \mbbX \to B$ of $X$ over a CW complex $B$. Choose a continuous family of smooth Riemannian metrics on $\mbbX$. If the tangent bundle along the fibers $T_B \mbbX$ admits a spin structure, then $w_2(\hplus{\mbbX})$ vanishes.
\end{theorem}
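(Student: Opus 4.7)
The plan is to use the families Bauer--Furuta construction (finite-dimensional approximation of the family Seiberg--Witten map) together with the Morgan--Szab\'{o} oddness theorem (\cref{ref-thm1:MS}). A spin structure on $T_B\mbbX$ induces a family of fiberwise spin structures, and hence a family of spin$^c$ structures $\mfraks_c$ with $c_1(\mfraks_c)=0$. The associated family Dirac operator produces a virtual complex vector bundle $\indspinc \to B$. As a virtual real bundle the realification carries a canonical complex (hence spin) structure, so $w_i(\indspinc)=0$ for all $i\geq 1$; in particular $w_2(\indspinc)=0$.

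Next, I would invoke the family version of the Bauer--Furuta construction: a suitable finite-dimensional approximation of the family monopole map yields a stable $\mathrm{Pin}(2)$-equivariant map of Thom spaces over $B$,
\[
  \mathrm{BF}\colon \mathrm{Th}(\indspinc) \longrightarrow \mathrm{Th}(\hplus{\mbbX}).
\]
Its restriction to a fiber $b\in B$ recovers the ordinary Bauer--Furuta invariant of $(\mbbX_b,\mfraks_{c,b})$. By \cref{ref-thm1:MS} this fiberwise invariant has odd degree, so the underlying non-equivariant $\mathrm{BF}$ is a mod-$2$ equivalence on each fiber.

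The final step is to promote this fiberwise mod-$2$ non-triviality to a global identification of the mod-$2$ Thom classes in $H^\ast(B;\Z/2)$, which I would carry out via a Leray--Hirsch / Serre spectral sequence argument applied to the Thom-space fibrations, using that Thom classes are uniquely characterized on fibers. Once the two Thom classes are identified under $\mathrm{BF}^\ast$, the Wu identity $\mathrm{Sq}^i(U_E)=w_i(E)\smile U_E$ combined with the Thom isomorphism yields
\[
  w_i(\hplus{\mbbX}) = w_i(\indspinc) \quad \text{in } H^\ast(B;\Z/2)
\]
for every $i$, and in particular $w_2(\hplus{\mbbX})=0$. The main obstacle I anticipate is controlling the stabilization in the finite-dimensional approximation: trivial summands must be added on both sides and the resulting Thom-class shifts must cancel, so that the Wu-formula comparison genuinely relates the Stiefel--Whitney classes of the geometrically meaningful bundles $\indspinc$ and $\hplus{\mbbX}$ rather than those of their stabilizations.
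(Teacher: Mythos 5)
There are two genuine gaps in your proposal, both at steps you treat as routine.

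First, the claim that the realification of $\indspinc$ is ``complex (hence spin), so $w_i(\indspinc)=0$ for all $i\geq 1$'' is false: a complex structure gives only $w_1=0$ and $w_2\equiv c_1 \pmod 2$, and $c_1$ of a complex index bundle need not vanish (this is exactly why Baraglia--Konno's Theorem 1.3 produces the answer $w_2(\hplus{\mbbX})=c_1(\indspinc) \bmod 2$ in the general spin$^c$ case). What saves you in the spin case is not the complex structure but the quaternionic one: the genuine spin structure makes the family Dirac operator $Pin(2)$-equivariant, so $\indspinc$ carries a quaternionic structure, the fiberwise symplectic form trivializes its determinant, $c_1(\indspinc)=0$, and only then does $w_2$ of the realification vanish. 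You must invoke the $j$-action, not just the $U(1)$-action.

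Second, and more seriously, the assertion that ``the underlying non-equivariant $\mathrm{BF}$ is a mod-$2$ equivalence on each fiber'' cannot be correct. The two Thom spaces have different dimensions (for a homotopy $K3$ the virtual index of the monopole map is $\ind_\C D - b^+ = 2\cdot 2 - 3 \neq 0$ in real rank), so there is no fiberwise degree, and in fact the non-equivariant fiberwise Bauer--Furuta class is a stable map $S^{5}\to S^{0}$ and is null-homotopic; the Morgan--Szab\'o oddness is invisible non-equivariantly. The Seiberg--Witten invariant is extracted only after exploiting the free $U(1)$-action away from the reducibles and passing to the quotient (compare the fiber integral $c(\mscrF_b)=\int x\cdot \mcalF_b^{\prime\ast}\tau_{\mcalW_b}$ in \cref{eq3:fam sw inv}, where the insertion of $x=c_1(\mcalO(1))$ over the projectivized sphere $S(V_\HB)/U(1)$ is essential). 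Consequently your Leray--Hirsch/Wu-formula comparison of Thom classes must be run on this quotient construction, where the Thom classes of the quaternionic pieces contribute nontrivially and the Wu formula yields a relation between $w(\hplus{\mbbX})$, $w(\indspinc)$, and the mod-$2$ reduction of the families invariant, with the Morgan--Szab\'o theorem guaranteeing that the degree-zero coefficient is odd and hence invertible. This is precisely the computation of Steenrod squares of the families Seiberg--Witten invariants that Baraglia--Konno carry out; as written, your step 2 skips the part of the argument where all the content lies.
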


Baraglia--Konno\cite[Theorem 1.3]{Baraglia--Konno-2022} has shown a stronger result than \cref{ref-thm1:w_2=0}: if $X$ satisfies certain assumptions and $T_B \mbbX$ admits a spin$^c$ structure, then $w_2(\hplus{\mbbX})$ equals the first Chern class of the family of the Dirac operators. Their strategy is to apply algebraic topology techniques to a finite-dimensional approximation of the Seiberg--Witten map. More precisely, they compute the Steenrod squares of the cohomology classes on $B$ called the families Seiberg--Witten invariants, and \cref{ref-thm1:w_2=0} follows as a corollary.

\cref{thm1:spin on TX+H^+} can be regarded as a generalization of Baraglia--Konno\cite[Corollary 4.21]{Baraglia--Konno-2022} in two senses. One is that \cref{thm1:spin on TX+H^+} obtains a canonical geometric object related to $\hplus{\mbbX}$. While the result of Baraglia--Konno\cite[Theorem 1.3]{Baraglia--Konno-2022} suggests that a spin structure exists on $\hplus{\mbbX}$, their algebraic topology approach does not lead to an explicit construction.

The other is that the assertion of \cref{thm1:spin on TX+H^+} holds even when the tangent bundle along the fibers $T_B \mbbX$ does not necessarily admit a spin or spin$^c$ structure. The approach of Baraglia--Konno\cite{Baraglia--Konno-2022} is applicable only when at least a spin$^c$ structure is present. When $X$ is a genuine $K3$ surface, there exists a family for which $T_B \mbbX$ does not admit a spin or spin$^c$ structure, so this difference is meaningful. As a consequence of \cref{thm1:spin on TX+H^+}, we also obtain a result identifying $w_2(\hplus{\mbbX})$ with a different cohomology class, even in cases where $T_B \mbbX$ does not necessarily admit a spin structure (\cref{thm1:alpha = w_2}).

\subsection{Outline of the proof}
We outline the strategy for proving \cref{thm1:spin on TX+H^+}. For simplicity, we consider a family $X \to \mbbX \to B$ of $X$ where $T_B \mbbX$ is equipped with a spin structure. In this case, the goal is to construct a spin structure on $\hplus{\mbbX}$. (This statement is part of \cref{thm1:spin on H^+} and is also an important step in the proof of \cref{thm1:spin on TX+H^+}.) The basic idea is as follows:
\begin{enumarabicp}
  \item When the base space $B$ satisfies certain nice properties, a spin structure on $\hplus{\mbbX}$ has a translation in terms of geometric data on $\shplus{\mbbX}$ (using the fact that $\hplus{\mbbX}$ has rank 3). The data consists of a complex line bundle $L$ over $\shplus{\mbbX}$ and an anti-linear $\Z/4$-action on $L$ with some additional properties. The goal is to construct such data.
  \item From the given spin structure on $T_B \mbbX$, we obtain a $Pin(2)$-equivariant family of the Seiberg--Witten maps.
  \item Noting the appearance of $\shplus{\mbbX}$ in (1), we consider a family of perturbations of the Seiberg--Witten map parameterized by $\shplus{\mbbX}$. This yields a finite-dimensional approximation parameterized by $\shplus{\mbbX}$.
  \item From the finite-dimensional approximation, we construct a family of Clifford bundles parameterized by $\shplus{\mbbX}$ via a procedure based on the construction of $K$-theoretic mapping degree. Due to the $Pin(2)$-equivariance of the Seiberg--Witten map, this family is equipped with an $\Z/4$-action. These data yield the geometric data mentioned in (1).
\end{enumarabicp}
To ensure that this construction produces a canonical spin structure, we need to pay attention to the following points:
\begin{itemize}
  \item In (4), the family of Clifford bundles needs to be constructed canonically from the finite-dimensional approximation. This is necessary for the constructed spin structure to be canonical. This is where our method differs from a naive application of algebraic topology techniques.
  \item There are two issues regarding the choice of the finite-dimensional approximation. One is that the finite-dimensional approximation may not be unique due to the possibility of stabilization. The other is that since we do not assume the compactness of the base space $B$, the finite-dimensional approximation in (3) may not be globally constructible over $B$. For the former issue, we must show that the spin structures constructed from two different finite-dimensional approximations are canonically isomorphic. The latter issue is resolved as follows: even if a finite-dimensional approximation is not globally constructible, it can be constructed locally on sufficiently small open subsets of $B$. Moreover, we can cover $B$ with such open subsets. By gluing the spin structures on $\hplus{\mbbX}$ constructed on each open subset, we finally construct a spin structure on $B$.
\end{itemize}

\subsection{Organization of the paper}\label{ssec1:org}
The organization of this paper is as follows. In \cref{sec2:main thms}, we state the precise statements of the main theorems.
In \cref{sec3:spin}, we characterize spin structures on oriented real vector bundles of rank 3.
In \cref{sec4:spin from fda}, we review the construction of the finite-dimensional approximation of the Seiberg--Witten map. After that, we construct a spin structure on $\hplus{\mbbX}$ when fixing a finite-dimensional approximation.
In \cref{sec5:swap fda}, we prove that when two different finite-dimensional approximations are given for the same base space, there exists a canonical isomorphism between the spin structures on $\hplus{\mbbX}$ constructed from each.
In \cref{sec6:prf main thm}, we prove that even when a finite-dimensional approximation cannot be globally constructed, we can glue the spin structures constructed on each open subset to canonically construct a spin structure on $\hplus{\mbbX}$.

\begin{acknowledgement}
  Part of the paper is based on the author's master thesis. The author would like to express his deep gratitude to his supervisor, Mikio Furuta, for helpful suggestions and continued encouragement during this work. The author would like to express his appreciation to Jin Miyazawa, Nobuo Iida, Hokuto Konno, Shinichiroh Matsuo, Taketo Sano, Nobuhiro Nakamura and Masaki Taniguchi for having discussions and helpful comments. This research was supported by Forefront Physics and Mathematics Program to Drive Transformation (FoPM), a World-leading Innovative Graduate Study (WINGS) Program, the University of Tokyo.
\end{acknowledgement}

\section{Statement of the main theorems}\label{sec2:main thms}
In this section, we state the main theorems of this paper: \cref{thm1-0:spin on TX+H^+}, \cref{thm1:alpha = w_2}, and \cref{thm1:spin on H^+}. \cref{thm1-0:spin on TX+H^+} is a restatement of \cref{thm1:spin on TX+H^+}.

Throughout this section, let $X$ denote a homotopy $K3$ surface. Also, let $\mathfrak{s}$ be a spin structure on $X$. In this paper, we formulate spin structures without using Riemannian metrics on $X$.

First, we define necessary concepts.

\begin{definition}\label{def1:str grp}
  Let $\Diffplus$ be the group of all orientation-preserving self-diffeomorphisms of $X$. Also, let
  \begin{align*}
    \Diffspin & = \set{(f, \tf)}{f \in \Diffplus,                                                                                          \\
              & \text{$\map{\tf}{\mfraks}{\mfraks}$ is a $\widetilde{GL}^+(4, \R)$-equivariant lift of $\map{df}{\Fr^+(TX)}{\Fr^+(TX)}$}}.
  \end{align*}
  We endow $\Diffplus$ and $\Diffspin$ with the $C^\infty$ topology. With these topologies and group structures, $\Diffplus$ and $\Diffspin$ are topological groups.
\end{definition}

\begin{remark}\label{rem1:smooth fam}
  Let $\mcalE \to B$ be a principal $\Diffplus$-bundle. We define the associated fiber bundle $\mbbX$ by
  \[
    \mbbX = \mcalE \times_{\Diffplus} X.
  \]
  Since the structure group of $\mcalE$ is $\Diffplus$, each fiber $\mbbX_b$ of $\mbbX$ at $b \in B$ is equipped with a smooth structure and is diffeomorphic to $X$. The precise meaning of ``a smooth family of $X$'' is a fiber bundle with fiber $X$ associated with some principal $\Diffplus$-bundle.
\end{remark}

\begin{definition}
  Let $X \to \mbbX \to B$ be a family of $X$ associated with a principal $\Diffplus$-bundle $\mcalE \to B$ over a CW complex $B$. We choose a continuous family of smooth Riemannian metrics on $\mbbX$ with respect to the $C^\infty$ topology. Then, we define a rank 3 vector bundle over $B$ by
  \[
    \hplus{\mbbX} = \coprod_{b \in B} \hplus{\mbbX_b}.
  \]
  Here, $\hplus{\mbbX_b}$ denotes the real vector space of all self-dual harmonic 2-forms on $\mbbX_b$.
\end{definition}

\begin{remark}\label{rem1:ori of H^+}
  The construction of $\hplus{\mbbX}$ is possible when $X$ is a general oriented closed 4-manifold. In that case, $\hplus{\mbbX}$ is a vector bundle of rank $b^+(X)$.

  When $X$ is a homotopy $K3$ surface, we can canonically give the orientation of $\hplus{\mbbX}$ so that the sign of the Seiberg--Witten invariant for the spin structure becomes positive. We assume that the orientation of $\hplus{\mbbX}$ is determined in this way.
\end{remark}

First, we restate \cref{thm1:spin on TX+H^+} using the universal $\Diffplus$-bundle.

\begin{theorem}\label{thm1-0:spin on TX+H^+}
  Let $X$ be a homotopy $K3$ surface. Let
  \[
    X \to \Xunivplus \stackrel{\pi}{\to} \BDiffplus
  \]
  be the smooth family of $X$ associated with the universal principal $\Diffplus$-bundle
  \[
    \EDiffplus \to \BDiffplus.
  \]
  Choose a continuous family of smooth Riemannian metrics on $\Xunivplus$ with respect to the $C^\infty$ topology. Then, we can construct a canonical spin structure on the vector bundle
  \[
    T_{\BDiffplus} \Xunivplus \oplus \pi^\ast \hplus{\Xunivplus}
  \]
  over $\Xunivplus$. Here
  \[
    T_{\BDiffplus} \Xunivplus \to \Xunivplus
  \]
  denotes the tangent bundle of $\Xunivplus$ along the fibers.
\end{theorem}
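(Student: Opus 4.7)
The approach is to reduce the universal statement to the local version already at hand, namely \cref{thm1:spin on H^+}, together with the gluing machinery of \cref{sec5:prf main thm}. The central point is that although neither $T_{\BDiffplus}\Xunivplus$ nor $\hplus{\Xunivplus}$ need carry a spin structure over $\BDiffplus$, their direct sum does, because the two $\Z/2$-obstructions are canonically identified via the families Seiberg--Witten equations.

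First, I would cover $\BDiffplus$ by open sets $\{U_\alpha\}$ over each of which $\EDiffplus$ admits a local trivialization; such a cover exists since $\BDiffplus$ is paracompact. A trivialization $\tau_\alpha$ identifies $\pi^{-1}(U_\alpha)$ with $U_\alpha \times X$, and the fixed spin structure $\mfraks$ on $X$ then induces a spin structure on $T_{\BDiffplus}\Xunivplus$ over $\pi^{-1}(U_\alpha)$. By \cref{thm1:spin on H^+}, this produces a canonical spin structure on $\hplus{\Xunivplus}|_{U_\alpha}$, which pulls back to $\pi^\ast\hplus{\Xunivplus}$ over $\pi^{-1}(U_\alpha)$. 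The direct sum of these two spin structures is a spin structure $\mfrakt_\alpha$ on $(T_{\BDiffplus}\Xunivplus \oplus \pi^\ast\hplus{\Xunivplus})|_{\pi^{-1}(U_\alpha)}$.

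Second, I would verify that these local spin structures patch together. A different trivialization modifies the local spin structure on $T_{\BDiffplus}\Xunivplus$ by a $\Z/2$-valued class, namely the obstruction to lifting the relevant transition map $U_\alpha \cap U_\beta \to \Diffplus$ through $\Diffspin \to \Diffplus$. The crucial observation is that the construction of \cref{sec3:spin from fda}, built from the $Pin(2)$-equivariant finite-dimensional approximation of the families Seiberg--Witten map, is equivariant with respect to the deck transformation of $\Diffspin \to \Diffspiniso$: the nontrivial deck transformation acts by sign on the Clifford bundle produced by the $K$-theoretic mapping degree and so exactly flips the induced spin structure on $\hplus{\Xunivplus}$. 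Consequently the two $\Z/2$-twists in the summands cancel, $\mfrakt_\alpha$ and $\mfrakt_\beta$ agree on $\pi^{-1}(U_\alpha \cap U_\beta)$, and gluing as in \cref{sec5:prf main thm} produces the desired global spin structure. Canonicity follows because each step of the construction was canonical.

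The main obstacle is precisely this compatibility check: matching the $\Z/2$-twist on the $T_{\BDiffplus}\Xunivplus$-side, which is purely topological, with the $\Z/2$-twist on the $\hplus{\Xunivplus}$-side, which arises from the analytic Seiberg--Witten machinery. Equivalently, this is the content of the statement in \cref{ssec1:main res} that the $O(1)$-gerbe $\mscrG_{\mcalE}$ is canonically isomorphic to $\mscrG_{\hplus{\mbbX}}$. Verifying it requires tracking the nontrivial element of the deck transformation of $\Diffspin \to \Diffplus$ through the finite-dimensional approximation and through the $K$-theoretic Clifford bundle construction, and identifying its action with the involution that exchanges the two spin structures on $\hplus{\Xunivplus}$. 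Once this equivariance is established, the gluing is routine and the proof of \cref{thm1-0:spin on TX+H^+} is complete.
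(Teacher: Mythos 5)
Your proposal is correct and follows essentially the same route as the paper: cover $\BDiffplus$ by open sets over which $\EDiffplus$ lifts to a principal $\Diffspin$-bundle (your local trivializations are a special case of such lifts), take the direct sum of the induced spin structures on $T_{\BDiffplus}\Xunivplus$ and $\pi^\ast\hplus{\Xunivplus}$, and observe that on overlaps the $\pm 1$ ambiguity in the two summands cancels. The ``main obstacle'' you describe is precisely the content of \cref{thm1:spin on H^+}(2)(3), which the paper simply invokes (rather than re-deriving the equivariance through the finite-dimensional approximation), so the gluing step is already available once that theorem is in hand.
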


\begin{remark}
  In this paper, we assume that $\BDiffplus$ is a CW complex. That is, $\BDiffplus$ classifies principal $\Diffplus$-bundles over CW complexes.
\end{remark}

The second main theorem is an equality of characteristic classes of $\BDiffplus$ obtained from \cref{thm1:spin on TX+H^+}. One of them is $w_2(\hplus{\Xunivplus})$. We explain the definition of the other characteristic class. First, when $X$ is a homotopy $K3$ surface, its spin structure is unique up to isomorphism, and so there is the following central extension:
\[
  1 \to \Z/2 \to \Diffspin \to \Diffplus \to 1.
\]

\begin{definition}\label{def1:alpha}
  Let $X \to \mbbX \to B$ be a family of $X$ associated with a principal $\Diffplus$-bundle $\mcalE \to B$ over a CW complex $B$. We denote the primary obstruction class for lifting the structure group of $\mcalE$ to $\Diffspin$ by
  \[
    \alpha(\mbbX, \mfraks) \in H^2(B; \Z/2).
  \]
\end{definition}

\begin{remark}
  In fact, the vanishing of $\alpha(\mbbX, \mfraks)$ is equivalent to $\mcalE$ lifting to a principal $\Diffspin$-bundle. Also, $\mcalE$ lifting to a principal $\Diffspin$-bundle is equivalent to $T_B \mbbX$ admitting a spin structure.
\end{remark}

\begin{remark}\label{rem1:char of alpha}
  $\alpha(\mbbX, \mfraks)$ can also be characterized as an element of $H^2(B; \Z/2)$ satisfying
  \[
    \pi_{\mbbX}^\ast \alpha(\mbbX, \mfraks) = w_2(T_B \mbbX).
  \]
  Here, $T_B \mbbX$ is the tangent bundle along the fibers of $X \to \mbbX \stackrel{\pi_{\mbbX}}{\to} B$.
\end{remark}

We obtain the following equality about characteristic classes on $\BDiffplus$.

\begin{theorem}\label{thm1:alpha = w_2}
  Let $X$ be a homotopy $K3$ surface. Let $\mfraks$ be a spin structure on $X$. Let $X \to \Xunivplus \to \BDiffplus$ be the family of $X$ associated with the universal principal $\Diffplus$-bundle $\EDiffplus \to \BDiffplus$. We choose a continuous family of smooth Riemannian metrics on the fibers of $\Xunivplus$ with respect to the $C^\infty$ topology. Then we have
  \[
    \alpha(\Xunivplus, \mfraks) = w_2(\hplus{\Xunivplus}).
  \]
\end{theorem}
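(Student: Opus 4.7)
The plan is to derive Theorem \ref{thm1:alpha = w_2} as a direct consequence of Theorem \ref{thm1-0:spin on TX+H^+} (that is, Theorem \ref{thm1:spin on TX+H^+}) combined with the characterization of $\alpha(\mbbX, \mfraks)$ recalled in Remark \ref{rem1:char of alpha}. The whole argument takes place on the universal family $X \to \Xunivplus \stackrel{\pi}{\to} \BDiffplus$, and reduces to a Stiefel--Whitney class calculation followed by an injectivity argument.

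First I would invoke Theorem \ref{thm1-0:spin on TX+H^+} to get a (canonical) spin structure on
\[
  T_{\BDiffplus} \Xunivplus \oplus \pi^\ast \hplus{\Xunivplus},
\]
which in particular gives
\[
  w_2\bigl(T_{\BDiffplus} \Xunivplus \oplus \pi^\ast \hplus{\Xunivplus}\bigr) = 0.
\]
Expanding this via the Whitney sum formula yields
\[
  w_2(T_{\BDiffplus} \Xunivplus) + w_1(T_{\BDiffplus} \Xunivplus)\, w_1(\pi^\ast \hplus{\Xunivplus}) + \pi^\ast w_2(\hplus{\Xunivplus}) = 0.
\]
The structure group of the universal family is $\Diffplus$, so the tangent bundle along the fibers $T_{\BDiffplus} \Xunivplus$ is oriented; hence $w_1(T_{\BDiffplus} \Xunivplus) = 0$, and the middle cross term drops out. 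This leaves
\[
  w_2(T_{\BDiffplus} \Xunivplus) = \pi^\ast w_2(\hplus{\Xunivplus}).
\]

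Next, I would apply Remark \ref{rem1:char of alpha}, which identifies $\alpha(\Xunivplus, \mfraks)$ as the unique class on $\BDiffplus$ satisfying $\pi^\ast \alpha(\Xunivplus, \mfraks) = w_2(T_{\BDiffplus} \Xunivplus)$. Combining with the previous display yields
\[
  \pi^\ast \alpha(\Xunivplus, \mfraks) = \pi^\ast w_2(\hplus{\Xunivplus}).
\]
The same Serre-spectral-sequence injectivity cited in Remark \ref{rem1:char of alpha}, applied to $\map{\pi}{\Xunivplus}{\BDiffplus}$ (whose fiber is the simply connected homotopy $K3$ surface $X$, so that $H^0(X;\Z/2) = \Z/2$ and $H^1(X;\Z/2) = 0$, forcing $\map{\pi^\ast}{H^2(\BDiffplus;\Z/2)}{H^2(\Xunivplus;\Z/2)}$ to be injective), lets me cancel $\pi^\ast$ from both sides and conclude $\alpha(\Xunivplus, \mfraks) = w_2(\hplus{\Xunivplus})$.

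There is essentially no obstacle: the entire content of Theorem \ref{thm1:alpha = w_2} is packaged into Theorem \ref{thm1-0:spin on TX+H^+}, and the remaining manipulation is a Whitney-sum computation plus the standard edge-homomorphism injectivity. The one point that requires a brief verification is the orientability of $T_{\BDiffplus} \Xunivplus$, which is immediate from the fact that the universal bundle has structure group $\Diffplus$ (orientation-preserving diffeomorphisms), so the fiberwise orientations of the fibers $\mbbX_b \cong X$ assemble into a global orientation of $T_{\BDiffplus}\Xunivplus$, killing $w_1$.
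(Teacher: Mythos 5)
Your proposal is correct and is essentially identical to the paper's own proof: both deduce $w_2(T_{\BDiffplus}\Xunivplus \oplus \pi^\ast\hplus{\Xunivplus}) = 0$ from \cref{thm1-0:spin on TX+H^+}, rewrite this as $w_2(T_{\BDiffplus}\Xunivplus) = \pi^\ast w_2(\hplus{\Xunivplus})$, and conclude via the characterization and injectivity statement of \cref{rem1:char of alpha}. Your explicit justification of the vanishing cross term and of the Serre-spectral-sequence injectivity just fills in details the paper leaves implicit.
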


\begin{remark}
  Suppose $X$ is a genuine $K3$ surface. The argument in Section 5 of Baraglia--Konno\cite{Baraglia-Konno-Nielsen-K3} shows the existence of a smooth family $X \to \mbbX \to T^2$ of $X$ over $T^2$ satisfying
  \[
    w_2(\hplus{\mbbX}) \neq 0.
  \]
  Combined with \cref{ref-thm1:w_2=0}, this means that when $X$ is a genuine $K3$ surface,
  \[
    \alpha(\Xunivplus, \mfraks) \neq 0.
  \]
  The non-vanishing of $\alpha(\Xunivplus, \mfraks)$ shows
  \[
    \Diffspin \ncong \Diffplus \times \{\pm1\}
  \]
  as topological groups. This is in contrast to the result (introduced in Kronheimer--Mrowka\cite{Kronheimer--Mrowka-Dehn-twist-K3-2020}) that when $X$ is a homotopy $K3$ surface, there exists a homeomorphism
  \[
    \Diffspin \approx \Diffplus \times \{\pm1\}
  \]
  as topological spaces (forgetting group structures). For a general homotopy $K3$ surface, it is not known whether $\alpha(\Xunivplus, \mfraks)$ vanishes or not.
\end{remark}

The last main theorem asserts that when a lift of the structure group of a family $\mbbX$ of $X$ to $\Diffspin$ is given, a spin structure on $\hplus{\mbbX}$ is canonically constructed.

\begin{theorem}\label{thm1:spin on H^+}
  Let $B$ be a topological space that is locally simply-connected and homeomorphic to an open set of some paracompact Hausdorff space. Let $X \to \mbbX \to B$ be a family of $X$ associated with a principal $\Diffplus$-bundle $\mcalE \to B$. We assume that $\mbbX$ is given a continuous family of smooth Riemannian metrics on $\mbbX$ with respect to the $C^\infty$ topology.
  \begin{enumarabicp}
    \item Suppose that a lift $\tmcalE \to B$ of $\mcalE$ to a principal $\Diffspin$-bundle is given. Then, we can canonically construct a spin structure $\mfrakt$ on $\hplus{\mbbX} \to B$.
    \item The operation of constructing $\mfrakt$ from a lift $\tmcalE$ is functorial. (See \cref{rem1:functor}.)
    \item Suppose that a lift $\tmcalE \to B$ of the structure group of $\mcalE$ to $\Diffspin$ is given. Let $\mfrakt$ denote the spin structure on $\hplus{\mbbX}$ constructed in (1). Then, under the correspondence of morphisms in (2), the automorphism $+1$ of $\tmcalE$ corresponds to the automorphism $+1$ of $\mfrakt$, and the automorphism $-1$ of $\mcalE$ corresponds to the automorphism $-1$ of $\mfrakt$. (For the meaning of $\pm 1$, see \cref{rem1:pm 1}.)
  \end{enumarabicp}
\end{theorem}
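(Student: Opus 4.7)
The plan is to assemble part~(1) from the building blocks developed in \cref{sec2:spin,sec3:spin from fda,sec4:swap fda,sec5:prf main thm}: the characterization of spin structures on oriented rank~$3$ real vector bundles (\cref{sec2:spin}), the construction of a spin structure on $\hplus{\mbbX}$ from a fixed finite-dimensional approximation of the families Seiberg--Witten map (\cref{sec3:spin from fda}), the canonical comparison isomorphism between spin structures coming from two different finite-dimensional approximations (\cref{sec4:swap fda}), and the gluing procedure (\cref{sec5:prf main thm}). Concretely, the lift $\tmcalE$ produces a $Pin(2)$-equivariant family of Seiberg--Witten maps over $B$. Since $B$ is only locally simply-connected and locally homeomorphic to an open set of a paracompact Hausdorff space --- not assumed compact --- a finite-dimensional approximation need not exist globally, but one does exist on each sufficiently small open $U \subset B$; \cref{sec3:spin from fda} then produces a spin structure $\mfrakt_U$ on $\restr{\hplus{\mbbX}}{U}$, and on overlaps \cref{sec4:swap fda} provides a canonical isomorphism between the two such locally constructed spin structures.

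To glue these into a global spin structure $\mfrakt$ on $\hplus{\mbbX} \to B$, I would verify that the canonical comparison isomorphisms of \cref{sec4:swap fda} satisfy the cocycle condition on triple overlaps; this is the content of \cref{sec5:prf main thm}. The mechanism is that the comparison isomorphism is constructed by stabilizing any two approximations to a common refinement, so a triple composition reduces, after common stabilization, to the identity. The hard part will be ensuring the coherence of these stabilization isomorphisms strictly enough for the cocycle condition to hold on the nose: this is exactly what lifts the result from ``a spin structure exists'' (a statement that algebraic topology already suffices for) to ``a canonical spin structure exists''.

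For part~(2), naturality is inherited step by step from naturality of each ingredient: the $Pin(2)$-equivariant Seiberg--Witten map, its finite-dimensional approximation (after passing to a common stabilization), the associated $\Z/4$-equivariant Clifford bundle over $\shplus{\mbbX}$, and the translation back to a spin structure via \cref{sec2:spin}. A morphism of lifts need not preserve local approximating data, but this is absorbed by the canonical comparison of \cref{sec4:swap fda} together with the gluing formalism of \cref{sec5:prf main thm}. For part~(3), the central element $-1 \in \Diffspin$ acts trivially on $\mcalE$ and as $-1$ on the spinor bundle, so it multiplies the Dirac component of the Seiberg--Witten map by $-1$. Tracking this sign through the finite-dimensional approximation, through the $K$-theoretic mapping-degree construction of the Clifford bundle (where it acts as $-1$ on the spinorial factor), and finally through the dictionary of \cref{sec2:spin}, the automorphism $-1$ of $\tmcalE$ maps to the automorphism $-1$ of $\mfrakt$, while $+1$ evidently maps to $+1$. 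I expect the central obstacle throughout to be the cocycle verification in the gluing step, since it is precisely the coherence data that makes the construction canonical rather than merely existent up to isomorphism.
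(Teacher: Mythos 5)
Your overall architecture matches the paper's proof exactly: local finite-dimensional approximations exist on the members of an open cover of $B$ (\cref{lem3:cov of B}), each yields a spin structure on $\hplus{\restr{\mbbX}{U}}$ via \cref{thm3:spin from fda} and \cref{thm3:indep proj}, the comparison on overlaps is the stabilization isomorphism of \cref{thm4:can iso} (obtained, as you say, by passing to a common refinement, which is where the compatibility for triple overlaps comes from), and functoriality in (2) is inherited from the functoriality of each step. This is the same decomposition and the same key lemmas.

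There is, however, a genuine gap in your sketch of part (3). You argue that $-1\in\Diffspin$ acts as $-1$ on the spinorial factor of the approximation and conclude that it therefore induces the automorphism $-1$ of $\mfrakt$. But the automorphism $-1$ of $\mfrakt$ is, in the dictionary of \cref{sec2:spin}, multiplication by $-1$ on the line bundle $L=\det(\mcalD+t\mbfh)$ over $S(\hplus{\mbbX})$, and the action induced on a determinant line by $-1$ acting on the spinorial (weight-one) part of the index is $(-1)^{\ind}$: it is $+1$ whenever that virtual index is even. So the sign does not follow from equivariance alone. The paper closes this by observing that $-1=j^2$ in $Pin(2)$, so the induced automorphism of $L$ is $\tiota_t^2$, and $\tiota_t^2=-1$ precisely because $c_1(L)$ is a \emph{positive generator} of $H^2(S(\hplus{\mbbX_b});\Z)$ (\cref{thm3:det D+tiota pt}, resting on \cref{def3:fda model hK3}(3) and ultimately on the oddness of the Seiberg--Witten invariant of a homotopy $K3$, cf.\ \cref{rem3:split det line}). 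Without invoking this parity input your argument for (3) would equally well "prove" that $-1$ maps to $+1$ for a 4-manifold with even Seiberg--Witten invariant, so the homotopy-$K3$ hypothesis must enter here explicitly.
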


\begin{remark}\label{rem1:functor}
  We explain the precise meaning of the functoriality in \cref{thm1:spin on H^+}(2).

  First, we define a category $\mcalC$ as follows. The objects of $\mcalC$ are tuples $(B, \mcalE, \mbbX, g, \tmcalE)$ where
  \begin{itemize}
    \item $X \to \mbbX \to B$ is a family of $X$ associated with a principal $\Diffplus$-bundle $\mcalE$ over a topological space $B$ which is locally simply-connected and homeomorphic to an open set of some paracompact Hausdorff space,
    \item $g$ is a continuous family of smooth Riemannian metrics on $\mbbX$ with respect to the $C^\infty$ topology, and
    \item $\tmcalE$ is a lift of the structure group of $\mcalE$ to $\Diffspin$.
  \end{itemize}
  A morphism of $\mcalC$ from $(B, \mcalE, \mbbX, g, \tmcalE)$ to $(B^\prime, \mcalE^\prime, \mbbX^\prime, g^\prime, \tmcalE^\prime)$ consists of
  \begin{itemize}
    \item a continuous map $\map{f}{B}{B^\prime}$,
    \item a $\Diffplus$-equivariant map $\map{\Phi}{\mcalE}{\mcalE^\prime}$ covering $f$ and satisfying $\Phi^\ast g^\prime = g$,
    \item a $\Diffspin$-equivariant lift $\map{\tilde \Phi}{\tmcalE}{\tmcalE^\prime}$ of $\Phi$.
  \end{itemize}

  Next, we define a category $\mcalD$ as follows. The objects of $\mcalD$ are triples $(B, E, \mfrakt)$ where
  \begin{itemize}
    \item $E \to B$ is a real vector bundle of rank 3 with an orientation and a metric over a topological space $B$, which is locally simply-connected and homeomorphic to an open set of some paracompact Hausdorff space, and
    \item $\mfrakt$ is a spin structure on $E$.
  \end{itemize}
  A morphism of $\mcalD$ from $(B, E, \mfrakt)$ to $(B^\prime, E^\prime, \mfrakt^\prime)$ consists of
  \begin{itemize}
    \item a continuous map $\map{f}{B}{B^\prime}$,
    \item an orientation and metric preserving bundle map $\map{\Psi}{E}{E^\prime}$ covering $f$,
    \item a $Spin(3)$-equivariant lift $\map{\tilde \Psi}{\mfrakt}{\mfrakt^\prime}$ of $\Psi$.
  \end{itemize}

  \cref{thm1:spin on H^+}(2) asserts that the correspondence of objects in \cref{thm1:spin on H^+}(1) defines a functor from $\mcalC$ to $\mcalD$ by appropriately defining the correspondence of morphisms.
\end{remark}

\begin{remark}\label{rem1:pm 1}
  The meaning of the automorphisms $\pm 1$ of $\tmcalE$ in \cref{thm1:spin on H^+}(3) is as follows. From the short exact sequence
  \[
    1 \to \{\pm 1\} \to \Diffspin \to \Diffplus \to 1,
  \]
  $\pm 1$ are determined as elements of $\Diffspin$. The right action of $\pm 1$ on $\tmcalE$ defines automorphisms of $\tmcalE$, which we also write as $\pm 1$.

  Similarly, from the short exact sequence
  \[
    1 \to \{\pm 1\} \to Spin(3) \to SO(3) \to 1,
  \]
  the morphisms $\pm 1$ of the category $\mcalD$ (See \cref{rem1:pm 1}) are determined.
\end{remark}

\begin{remark}
  In \cref{thm1-0:spin on TX+H^+}, \cref{thm1:alpha = w_2} and \cref{thm1:spin on H^+}, we assumed that $X$ is a homotopy $K3$ surface. In each case, this assumption can be weakened to the following: $X$ is a connected 4-dimensional spin closed manifold satisfying
  \[
    b_0(X) = 1,\ b_1(X) = 0,\ b_2(X) = 22,\ b^+(X) = 3,\ H^1(X; \Z/2) = 0.
  \]
\end{remark}

\section{Spin structures on rank 3 real vector bundles}\label{sec3:spin}

Let $B$ be a topological space, and let $E \to B$ be a rank 3 real vector bundle with an orientation and a metric. In this section, we provide one characterization of spin structures on $E$. The method lies in an unconventional construction of $Spin(3)$. It can be obtained as a certain automorphism group of some geometric data on $S^2$.

First, we define the geometric data on $S^2$. More generally, we formulate it as geometric data on the unit sphere $S(V)$ of a rank 3 vector space $V$ with an orientation and a metric. In this case, the resulting automorphism group will be isomorphic to $Spin(V)$.

\begin{definition}
  A triple $\mbfL = (V, L, \tiota)$ is said to be a triple for $Spin(3)$-torsor if it satisfies the following conditions:
  \begin{enumarabicp}\label{def3:torsor}
    \item $V$ is a rank 3 real vector space with an orientation and a metric.
    \item $L$ is a complex line bundle over the unit sphere $S(V)$ of $V$, and
    \[
      c_1(L) \in H^2(S(V); \Z)
    \]
    is a positive generator.
    \item Let $\map{\iota}{S(V)}{S(V)}$ be the antipodal map. $\map{\tiota}{L}{L}$ is a fiberwise anti-linear continuous map covering $\iota$ and satisfies $\tiota^2 = -1$.
  \end{enumarabicp}
\end{definition}

A similar concept can be formulated for vector bundles as well.

\begin{definition}\label{def3:triple}
  Let $E \to B$ be a rank 3 real vector bundle with an orientation and a metric. A triple
  \[
    \mbfL = (E, L, \tiota)
  \]
  is said to be a triple for spin structure on $E$ if it satisfies the following conditions:
  \begin{enumarabicp}
    \item $L$ is a complex line bundle over $S(E)$.
    \item Let $\map{\iota}{S(E)}{S(E)}$ be the antipodal map. $\map{\tiota}{L}{L}$ is a fiberwise anti-linear continuous map covering $\iota$ and satisfies $\tiota^2 = -1$.
    \item For each $b \in B$,
    \[
      \mbfL_b = (E_b, L_b, \tiota_b)
    \]
    is a triple for $Spin(3)$-torsor. Here, $L_b$ denotes the restriction of $L$ to $S(E)$, and $\tiota_b$ denotes the restriction of $\tiota$ to $L_b$.
  \end{enumarabicp}
\end{definition}

Given a spin structure on $E$, it is easy to see that a triple for spin structure on $E$ can be constructed from that spin structure. Below, we prove that, under suitable assumptions on $B$, a spin structure on $E$ can be constructed from a triple for spin structure on $E$ (\cref{prop3:spin from triple}).

First, we construct a standard model of a triple for $Spin(3)$-torsor.

\begin{definition}\label{def3:std model}
  A triple $(V_0, L_0, \tiota_0)$ for $Spin(3)$-torsor is defined in the following way and called the standard model of triples for $Spin(3)$-torsor:
  \begin{itemize}
    \item Set $V_0 = \R^3$. Then, by stereographic projection, $S^2$ and $\C P^1$ can be identified. The map corresponding to the antipodal map on $S^2$ from $\C P^2$ to itself is given by
          \[
            \iota\parenlr{\begin{bmatrix}
                z \\
                w
              \end{bmatrix}} = \begin{bmatrix}
              -\bar w \\
              \bar z
            \end{bmatrix}.
          \]
    \item Set $L_0 = (S^3 \times \C) / U(1)$, where $S^3$ is realized as the sphere in $\C^2$, and the $U(1)$-action on $S^3 \times \C$ is defined by
          \[
            e^{i\theta} \cdot \parenlr{\begin{pmatrix}
                z \\
                w
              \end{pmatrix}, \alpha} = \parenlr{\begin{pmatrix}
                e^{i\theta} z \\
                e^{i\theta} w
              \end{pmatrix}, e^{i\theta} \alpha}.
          \]
    \item $\tiota_0$ is defined by
          \[
            \tiota\parenlr{\bracketlr{\begin{pmatrix}
                  z \\ w
                \end{pmatrix}, \alpha}} = \bracketlr{\begin{pmatrix}
                -\bar w \\
                \bar z
              \end{pmatrix}, \bar\alpha}.
          \]
  \end{itemize}
\end{definition}

Next, we define the isomorphism of triples for $Spin(3)$-torsors. As we will see later, the naive automorphism group defined here is not a double cover of $SO(V)$. The double cover of $SO(V)$ is constructed as an appropriate quotient of the naive automorphism group.

\begin{definition}
  An isomorphism between two triples $\mbfL = (V, L, \tiota)$ and $\mbfL^\prime = (V^\prime, L^\prime, \tiota^\prime)$ for $Spin(3)$-torsors is a pair $(f, \tf)$ of an orientation and metric preserving linear isomorphism
  \[
    \map{f}{V}{V^\prime}
  \]
  and a fiberwise complex linear isomorphism
  \[
    \map{\tf}{L}{L^\prime}
  \]
  covering it such that $\tf \tiota = \tiota^\prime \tf$.

  The set of all isomorphisms from $\mbfL$ to $\mbfL^\prime$ is denoted by
  \[
    \Iso(\mbfL, \mbfL^\prime).
  \]
  We introduce the compact open topology on $\Iso(\mbfL, \mbfL^\prime)$. Also, we write
  \[
    \Aut(\mbfL) = \Iso(\mbfL, \mbfL).
  \]
\end{definition}

\begin{lemma}\label{lem3:fiber bdl}
  Let $\mbfL = (V, L, \tiota)$ be a triple for $Spin(3)$-torsor. Then, $\Aut(\mbfL)$ is a fiber bundle over $SO(V)$. Its fiber is homeomorphic to the space $C$ of all continuous maps
  \[
    \map{\varphi}{S(V)}{\C^\times}
  \]
  satisfying
  \[
    \varphi(x) = \overline{\varphi(-x)}.
  \]
  The space $C$ has exactly two connected components.
\end{lemma}

\begin{proof}
  By a simple calculation, we can show that the fiber of $\map{\pi_\mbfL}{\Aut(\mbfL)}{SO(V)}$ at $I \in SO(V)$ is $C$.

  Take any $f \in SO(V)$. If an open neighborhood $U$ of $f$ is contractible, there exists a continuous map
  \[
    \map{\Phi}{U}{\Aut(\mbfL)}
  \]
  satisfying $\pi_\mbfL \circ \Phi = \id_U$. Then, the map $U \times C \to \restr{\Aut(\mbfL)}{U}$ given by
  \[
    (f^\prime, \varphi) \to \varphi \cdot \Phi(f^\prime)
  \]
  is a homeomorphism. This gives a local trivialization.

  Finally, we prove that $C$ has exactly two connected components. Take any $\varphi \in C$.
  Since $S(V)$ is contractible, there exist two continuous functions
  \[
    \map{r}{S(V)}{\R^\times},\ \map{\theta}{S(V)}{\R}
  \]
  such that $\varphi = r e^{i\theta}$. From $\varphi(x) = \overline{\varphi(-x)}$, there exists some $n \in \Z$ such that
  \[
    \theta(x) + \theta(-x) = 2 \pi n.
  \]
  By deforming $r$ to the constant function $1$ and $\theta$ to $\pi n$, $\varphi$ can be deformed to either the constant function $1$ or $-1$.
\end{proof}

By \cref{lem3:fiber bdl}, it is expected that a double cover of $SO(V)$ can be obtained by collapsing each connected component of the fiber of $\Aut(\mbfL) \to SO(V)$ to a point. With this motivation, we make the following definition.

\begin{definition}
  Let $\mbfL$ and $\mbfL^\prime$ be triples for $Spin(3)$-torsor. We introduce an equivalence relation on $\Iso(\mbfL, \mbfL^\prime)$ as follows: $(f, \tf) \in \Iso(\mbfL, \mbfL^\prime)$ and $(f^\prime, \tilde f^\prime)$ are equivalent if $f = f^\prime$ and $\tilde f$ and $\tilde f^\prime$ are isotopic through elements of $\Iso(\mbfL, \mbfL^\prime)$ covering $f$. We denote the quotient of $\HIso(\mbfL, \mbfL^\prime)$ by this equivalence relation as
  \[
    \HIso(\mbfL, \mbfL^\prime).
  \]
  Also, we write
  \[
    \HAut(\mbfL) = \HIso(\mbfL, \mbfL).
  \]
\end{definition}

\begin{lemma}
  Let $\mbfL = (V, L, \tiota)$ be a triple for $Spin(3)$-torsor. Then, $\HAut(\mbfL)$ is a double covering group of $SO(V)$.
\end{lemma}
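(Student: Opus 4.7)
The plan is to show that $\HAut(\mbfL)$ inherits a group structure from $\Aut(\mbfL)$ and that the natural projection $p \colon \HAut(\mbfL) \to SO(V)$ is a two-sheeted covering homomorphism. The key input is \cref{lem2:fiber bdl}, which provides a local description of $\Aut(\mbfL) \to SO(V)$ and identifies its two-component fiber.

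First I would verify that the equivalence relation defining $\HAut(\mbfL)$ is a congruence on the group $\Aut(\mbfL)$. If $\tilde f_t$ is an isotopy in $\Aut(\mbfL)$ from $\tilde f$ to $\tilde f'$ with every $\tilde f_t$ covering a fixed $f \in SO(V)$, and similarly $\tilde g_t$ covers $g$, then the concatenation of the isotopies $\tilde f \cdot \tilde g_t$ and $\tilde f_t \cdot \tilde g'$ yields an isotopy from $\tilde f \tilde g$ to $\tilde f' \tilde g'$ covering $fg$; likewise $\tilde f_t^{-1}$ gives an isotopy of inverses. Equipping $\HAut(\mbfL)$ with the quotient topology from $\Aut(\mbfL)$ then makes it a topological group for which $p$ is a continuous surjective homomorphism.

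Next I would compute the fiber of $p$ and establish local triviality. By \cref{lem2:fiber bdl}, the fiber of $\Aut(\mbfL) \to SO(V)$ over the identity is the space $C$, which has exactly two path components. Since the equivalence relation identifies precisely the elements lying in the same path component of each fiber, the fiber of $p$ over any point is $\pi_0(C) \cong \Z/2$. For local triviality, I would take the local trivialization $U \times C \to \restr{\Aut(\mbfL)}{U}$, $(f',\varphi) \mapsto \varphi \cdot \Phi(f')$, produced in \cref{lem2:fiber bdl} over a contractible open $U \subset SO(V)$. Since left multiplication by $\Phi(f')$ is a fiberwise homeomorphism and therefore preserves path components, this trivialization descends to a homeomorphism $U \times \pi_0(C) \to \restr{\HAut(\mbfL)}{U}$, exhibiting $p$ as a covering map with discrete two-point fiber. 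Being a group homomorphism with kernel of order two, $p$ is then a double covering group.

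The hard part will be the purely topological verification that the quotient topology on $\HAut(\mbfL)$ interacts correctly with the fiber bundle structure, so that the descent of the local trivialization is a homeomorphism rather than merely a continuous bijection. This reduces to the fact that the path-component relation on $C$ is a closed equivalence relation with Hausdorff quotient, which holds because $C$ is a locally path-connected mapping space with only finitely many components. Note that this lemma only asserts that $\HAut(\mbfL)$ is some double cover of $SO(V)$; identifying it with $Spin(V)$ (rather than the trivial cover $SO(V) \times \Z/2$) is a separate question presumably addressed in the sequel.
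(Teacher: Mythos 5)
Your proposal is correct and follows essentially the same route as the paper: both descend the local trivialization $U \times C \to \restr{\Aut(\mbfL)}{U}$ from \cref{lem2:fiber bdl} to obtain $U \times \{\pm 1\} \cong \restr{\HAut(\mbfL)}{U}$, and both transfer the topological group structure from $\Aut(\mbfL)$ to the quotient (you via an explicit congruence check, the paper via the existence of local sections of $\Aut(\mbfL) \to \HAut(\mbfL)$ — a minor variation only). Your closing remark is also consistent with the paper, which identifies the double cover with $Spin(3)$ separately in \cref{prop2:iso Spin(3)}.
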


\begin{proof}
  This follows from \cref{lem3:fiber bdl}.
\end{proof}

Under the above preparations, we prove the announced result.

\begin{proposition}\label{prop3:iso Spin(3)}
  Let $\mbfL_0$ be the standard model of triples for spin structure on $E$. Then, there exists a unique isomorphism
  \[
    Spin(3) \cong \HAut(\mbfL_0)
  \]
  as double covering groups of $SO(3)$.
\end{proposition}

\begin{proof}
  It is enough to prove that there exists an isomorphism
  \[
    SU(2) \cong \HAut(\mbfL_0).
  \]
  For $B \in SU(2)$, define an automorphism $\varphi_B$ of $L_0$ by
  \[
    \varphi_B \parenlr{\bracketlr{\begin{pmatrix}
          z \\
          w
        \end{pmatrix}, \alpha}} = \bracketlr{B \begin{pmatrix}
        z \\
        w
      \end{pmatrix}, \alpha}.
  \]
  This action lifts the $SO(3)$-action on $S^2$. Here, the homomorphism from $SU(2)$ to $SO(3)$ is constructed via the adjoint representation $SU(2) \to SO(\Image \HB)$ and the identification
  \[
    \R^3 \cong \Image \HB;\ (a, b, c) \mapsto ci + bj - ak.
  \]
  It is easy to see that it gives a topological group homomorphism
  \[
    SU(2) \to \Aut(\mbfL_0).
  \]
  Furthermore, composing this with the quotient map $\Aut(\mbfL_0) \to \HAut(\mbfL_0)$, a topological group homomorphism
  \[
    SU(2) \to \HAut(\mbfL_0)
  \]
  is constructed. It is readily shown that this is an isomorphism.
\end{proof}

\begin{remark}
  In the discussion so far, we have assumed that $c_1(L)$ is a positive generator of $H^2(S(V); \Z)$, but similar considerations can be made even when $c_1(L)$ takes the other values. The following parts change:
  \begin{itemize}
    \item In \cref{def3:torsor}, change $\tiota^2 = -1$ to $\tiota^2 = (-1)^{c_1(L)}$.
    \item In \cref{def3:std model}, the $U(1)$-action on $S^3 \times \C$ is
          \[
            e^{i\theta} \cdot \parenlr{\begin{pmatrix}
                z \\
                w
              \end{pmatrix}, \alpha} = \parenlr{\begin{pmatrix}
                e^{i\theta} z \\
                e^{i\theta} w
              \end{pmatrix}, e^{i c_1(L) \theta} \alpha}.
          \]
    \item The conclusion of \cref{prop3:iso Spin(3)} changes as follows: $\HAut(\mbfL_0)$ is isomorphic to $Spin(3)$ when $c_1(L)$ is odd, and isomorphic to $SO(3) \times \{\pm 1\}$ when $c_1(L)$ is even.
  \end{itemize}
\end{remark}

Next, we prove that a spin structure can be constructed from a triple for spin structure when $B$ is a good space.

\begin{proposition}\label{prop3:spin from triple}
  Let $B$ be a topological space that is locally simply-connected and homeomorphic to an open set of some paracompact Hausdorff space. Let $E$ be a rank 3 real vector bundle with an orientation and a metric over $B$. Let $\mbfL$ be a triple for spin structure on $E$. Then,
  \[
    P = \coprod_{b \in B} \HIso(\mbfL_0, \mbfL_b)
  \]
  admits a canonical topology, and the pair
  \[
    \mfrakt = (P, \rho)
  \]
  is a spin structure on $E$. Here $\map{\rho}{P}{\Fr^{SO}(E)}$ is a natural projection.
\end{proposition}

\begin{proof}
  We prove the following claim.
  \begin{claim}\label{clm3:Tietze}
    For each $b \in B$, an open neighborhood $V$ of $b$ can be taken with the following property: there exists an isomorphism from $V \times L_0$ to $\restr{L}{V}$ as complex line bundles, which is compatible with $\tiota_0$ and $\tiota$.
  \end{claim}
  \begin{proof}
    Take an isomorphism from $\{b\} \times L_0$ to $L_b$ as complex line bundles. Since we are discussing local properties, we may assume $E = B \times \R^3$. As the product of a paracompact Hausdorff space and a compact Hausdorff space is paracompact Hausdorff, the sphere bundle $B \times S^2$ of $E$ is homeomorphic to an open subset of a paracompact Hausdorff space. Therefore, by Tietze's extension theorem, there exists an open neighborhood $U$ of $b$ such that the isomorphism can be extended to
    \[
      \map{F}{V \times L_0}{\restr{L}{V}}.
    \]
    Since $B$ is locally simply-connected, $V$ can be taken to be simply-connected. We want to multiply $F$ by an appropriate continuous function
    \[
      \map{\varphi}{S(\restr{E}{V})}{\C^\times}
    \]
    so that $\varphi F$ becomes compatible with $\tiota_0$ and $\tiota$. Let $\map{\psi}{S(\restr{E}{V})}{\C^\times}$ be a function satisfying
    \[
      F(-x)^{-1} \circ \tiota(x) \circ F(x) = \psi(x) \tiota_0(x)
    \]
    for any $x \in S(\restr{E}{V})$. The compatibility can be expressed as
    \[
      \overline{\varphi(x)} \psi(x) = \varphi(-x)
    \]
    for any $x \in S(\restr{E}{V})$. We show the existence of $\varphi$ satisfying this property.

    Since $S(\restr{E}{V})$ is simply-connected, we can choose a square root $\chi$ of $\psi$. From $\tiota_0^2 = -1$ and $\tiota^2 = -1$, for any $x \in S(\restr{E}{V})$, we have
    \[
      \psi(x) \overline{\psi(-x)} = 1,
    \]
    and so $\chi$ satisfies
    \[
      \chi(x) \overline{\chi(-x)} = \pm 1.
    \]
    Since $\chi$ is an even function and defined on the 2-sphere, it can be seen that only
    \[
      \chi(x) \overline{\chi(-x)} = 1
    \]
    is possible. It suffices to set $\varphi = \chi / \abs{\chi}^2$.
  \end{proof}
  Take one of the isomorphisms in \cref{clm3:Tietze}. Then, we have a bijection
  \[
    \restr{P}{V} \cong V \times \HAut(\mbfL_0) \cong V \times Spin(3).
  \]
  We introduce a topology on $\restr{P}{V}$ by this bijection. It is clear that $P$ with this topology gives a spin structure on $E$.
\end{proof}

Finally, we state a proposition that is repeatedly used in \cref{sec4:spin from fda} and \cref{sec5:swap fda}. In these sections, we construct a triple for spin structure for $\hplus{\mbbX}$ by taking some auxiliary data. Therefore, we need to show that the spin structure constructed from it does not depend on them. For most of the auxiliary data, the space of all choices of that data is contractible. In that case, the following proposition shows the independence.

\begin{proposition}\label{prop3:simp-conn}
  Let $C$ be a simply-connected topological space. Let $B$ be a topological space that is locally simply-connected and homeomorphic to an open set of some paracompact Hausdorff space, and let $E$ be a rank $3$ real vector bundle with an orientation and a metric over $B$. Let
  \[
    \mbfL = (C \times E, L, \tiota)
  \]
  be a triple for spin structure on $C \times E$. For each $c \in C$, let $\mfrakt_c$ be the spin structure on $E$ constructed from
  \[
    (E, L_c, \tiota_c)
  \]
  by the method described in \cref{prop3:spin from triple}. Then, for any $c, c^\prime \in C$, an isomorphism of spin structures
  \[
    \map{\varphi_{c^\prime c}}{\mfrakt_c}{\mfrakt_{c^\prime}}
  \]
  is canonically constructed. This isomorphism satisfies the following properties:
  \begin{itemize}
    \item $\varphi_{cc} = \id_{\mfrakt_c}$.
    \item For $c, c^\prime, c^\pprime \in C$, we have $\varphi_{c^\pprime c^\prime} \circ \varphi_{c^\prime c} = \varphi_{c^\pprime c}$.
  \end{itemize}
\end{proposition}
\begin{proof}
  Take a path $\map{\gamma}{[0, 1]}{C}$ connecting $c$ and $c^\prime$. By pulling back $\mbfL$ by $\gamma$, we obtain a triple for spin structure on $[0, 1] \times E$. By \cref{prop3:spin from triple} we obtain a spin structure on $[0, 1] \times E$. By the unique lifting property, an isomorphism
  \[
    \map{\varphi_{c^\prime c}}{\mfrakt_{c^\prime}}{\mfrakt_c}
  \]
  is obtained. Since $C$ is simply-connected, this isomorphism does not depend on the choice of $\gamma$. The equalities
  \[
    \varphi_{cc} = \id_{\mfrakt_c},\ \varphi_{c^\pprime c^\prime} \circ \varphi_{c^\prime c} = \varphi_{c^\pprime c}
  \]
  also holds since $C$ is simply-connected.
\end{proof}

\section{Construction of a spin structure from a finite-dimensional approximation}\label{sec4:spin from fda}
In this section, we will do the following:
\begin{itemize}
  \item Construct a finite-dimensional approximation of the Seiberg--Witten map for a family $X \to \mbbX \to B$ of 4-dimensional closed spin manifolds $X$ satisfying $b_1(X) = 0$.
  \item Let $X$ be a homotopy $K3$ surface. Given a finite-dimensional approximation, construct a spin structure on $\hplus{\mbbX}$.
\end{itemize}

The former is a review of Furuta\cite{Furuta-11/8-inequality2001}\cite{Furuta-preprint}. The finite-dimensional approximation can be formulated for any 4-dimensional closed spin (or spin$^c$) manifold. In this paper, we deal with its construction only in the case $b_1(X) = 0$. The latter is achieved by proving that a triple for a spin structure on $\hplus{\mbbX}$ can be constructed from a finite-dimensional approximation.

There are two things to note about finite-dimensional approximations. First, the choice of a finite-dimensional approximation is not unique. Second, a global finite-dimensional approximation over $B$ may not exist in the first place. (This is because we do not assume compactness of $B$.) What can be proved is that a finite-dimensional approximation of the Seiberg--Witten map restricted to a sufficiently small open subset of $B$ exists. Therefore, to prove that a spin structure is constructed canonically over $B$, we also need to show the following:
\begin{itemize}
  \item Given two different global finite-dimensional approximations over $B$, a canonical isomorphism is constructed between the two spin structures obtained from them.
  \item Even if a global finite-dimensional approximation does not exist over $B$, $B$ can be covered by open sets on which a finite-dimensional approximation can be constructed. A gluing map is canonically constructed between the spin structures constructed on those open sets, and thus, a global spin structure is constructed over $B$.
\end{itemize}
The former will be proved in \cref{sec5:swap fda}. The latter will be proved in \cref{sec6:prf main thm}.

\subsection{Finite-dimensional approximation of the Seiberg--Witten map}\label{ssec4:sw fda}

In this subsection, we review the construction of a finite-dimensional approximation map of the Seiberg--Witten map for a family of 4-dimensional closed spin manifolds $X$ satisfying $b_1(X) = 0$. (We do not have to assume $X$ is a homotopy $K3$ surface here.) In this paper, we consider the following perturbed Seiberg--Witten map: parametrize the usual Seiberg--Witten map by $\shplus{\mbbX}$, and perturb it using the tautological section of $\shplus{\mbbX} \times \hplus{\mbbX}$. The method of construction is as written in Furuta\cite{Furuta-11/8-inequality2001}\cite{Furuta-preprint}. However, since we need to consider the map itself rather than the homotopy class of the finite-dimensional approximation, we carefully recall its construction.

Let $B$ be a topological space, and let $\mcalE \to B$ be a principal $\Diffplus$-bundle. Let
\[
  X \to \mbbX \to B
\]
be the family of $X$ associated with $\mcalE$. We assume $\mbbX$ is equipped with a family of Riemannian metrics that are smooth with respect to the $C^\infty$ topology. We also assume a lift
\[
  \tmcalE \to B
\]
of $\mcalE$ to a principal $\Diffspin$-bundle is given.

First, we explain the family of the Seiberg--Witten equations (with gauge fixing). Let
\begin{align*}
  \mscrV & = \coprod_{b \in B} \Omega^1(\mbbX_b; i\R) \times \Gamma(\mbbX_b; S^+_b),                               \\
  \mscrW & = \coprod_{b \in B} \Omega^+(\mbbX_b; i\R) \times \Omega^0(\mbbX_b; i\R) \times \Gamma(\mbbX_b; S^-_b).
\end{align*}
Here $S^\pm_b$ are the spinor bundles. We view $\mscrV$ and $\mscrW$ as vector bundles over $B$. Let
\[
  \map{D = (d^+ + d^\ast, D^+)}{\mscrV}{\mscrW}.
\]
Here, $\map{D^+}{\Gamma(S^+_b)}{\Gamma(S^-_b)}$ is the Dirac operator determined from the canonical reference connection. Also, define
\[
  \map{Q}{\mscrV}{\mscrW}
\]
by
\[
  Q(a, \phi) = (-q(\phi), 0, a \cdot \phi).
\]
The map $\map{q}{\Gamma(S^+_b)}{\Omega^1(\mbbX_b; i\R)}$ is defined by
\[
  q(\phi) \psi = \abracket{\psi, \phi} \phi - \frac{1}{2} \abs{\phi}^2 \psi.
\]
The term $a \cdot \phi$ is the Clifford multiplication of $\phi$ by $a$. The families Seiberg--Witten map is defined by $D + Q$.

The equation $D + Q = 0$ has reducible solutions, i.e., solutions satisfying $\phi = 0$. On such solutions, $U(1)$ acts trivially. To avoid reducible solutions, we introduce a perturbation parametrized by $\shplus{\mbbX}$. First, consider the pullback vector bundles
\[
  \shplus{\mbbX} \times_B \mscrV,\ \shplus{\mbbX} \times_B \mscrW
\]
of $\mscrV$, $\mscrW$ to $\shplus{\mbbX}$. The operators $D$ and $Q$ lift to the maps between the above bundles. We denote them by $D$, $Q$ as well. Also, let
\[
  \map{\Delta}{\shplus{\mbbX} \times_B \mscrV}{\shplus{\mbbX} \times_B \mscrW}
\]
be the pullback of the tautological section of $\shplus{\mbbX} \times_B \hplus{\mbbX} \to \shplus{\mbbX}$. (Note that $\hplus{\mbbX}$ is a subbundle of $\mscrW$.) We define the perturbed Seiberg--Witten map as follows.

\begin{definition}\label{def4:ptbd SW map}
  In the above situation, let
  \[
    \map{F = D + Q - \Delta}{\shplus{\mbbX} \times_B \mscrV}{\shplus{\mbbX} \times_B \mscrW}.
  \]
\end{definition}

The goal of this subsection is to describe a finite-dimensional approximation of $F$. As already mentioned, a finite-dimensional approximation may not exist globally over $B$. Therefore, we formulate auxiliary data (including an open subset of $B$) sufficient to construct it. After that, we verify that $B$ can be covered by such data.

To formulate the auxiliary data, we define the necessary concepts. We define a family of norms on the Hilbert bundles $\mscrV$, $\mscrW$ by
\begin{align}\label{eq4:norm of Hilb}
  \norm{v}_\mscrV^2 = \norm{(D^\ast D)^2 v}_{L^2}^2 + \norm{v}_{L^2}^2,\ \norm{w}_\mscrW^2 = \norm{(D D^\ast)^{3/2} w}_{L^2}^2 + \norm{w}_{L^2}^2.
\end{align}

For $\lambda > 0$, let $\mscrV^\lambda_b$ be the subspace of $\mscrV_b$ spanned by the eigenvectors corresponding to eigenvalues of $D^\ast D$ less than or equal to $\lambda$, and set
\[
  \mscrV^\lambda = \coprod_{b \in B} \mscrV^\lambda_b.
\]
A set $\mscrW^\lambda$ is defined similarly using $D D^\ast$. Both $\mscrV^\lambda_b$ and $\mscrW^\lambda_b$ are $Pin(2)$-equivariant finite-dimensional vector spaces. If $\lambda$ is not an eigenvalue of $D^\ast D$ at any point of a subset $U$ of $B$, these are $Pin(2)$-equivariant vector bundles over $U$.

\begin{definition}\label{def4:aux data}
  Let $\mcalA$ be the set of data $(R, U, \varepsilon, \lambda)$ satisfying the following properties:
  \begin{itemize}
    \item $R$, $\varepsilon$, $\lambda$ are positive real numbers.
    \item $U$ is an open subset of $B$.
    \item If $\norm{\tilde v}_{\shplus{\mbbX} \times_B \mscrV} \geq R$, then $F(\tilde v) \neq 0$.
    \item If $R \leq \norm{\tilde v}_{\restr{(\shplus{\mbbX} \times_B \mscrV)}{U}} \leq \sqrt{2} R$, then
          \[
            \norm{F(\tilde v)}_{\shplus{\mbbX} \times_B \mscrW} \geq \varepsilon.
          \]
    \item The number $\lambda$ is not an eigenvalue of $D^\ast D$ over $U$.
  \end{itemize}
\end{definition}

In Furuta\cite{Furuta-11/8-inequality2001}\cite{Furuta-preprint}, a finite-dimensional approximation is constructed using an element $(R, U, \varepsilon, \lambda)$ of $\mcalA$ and the orthogonal projection from $\mscrW$ to $\mscrW^\lambda$. (The condition that $\lambda$ is sufficiently large is necessary.) In this paper, we introduce a concept that abstracts only the necessary properties of the orthogonal projection.

\begin{definition}\label{def4:proj}
  Let $(R, U, \varepsilon, \lambda) \in \mcalA$. We define $\mcalP(R, U, \varepsilon, \lambda)$ as the space of continuous families
  \[
    \map{p}{\restr{(\shplus{\mbbX} \times_B \mscrW)}{U}}{\restr{(\shplus{\mbbX} \times_B \mscrW)}{U}}
  \]
  of bounded operators with respect to the norm of $\mscrW$ satisfying the following conditions:
  \begin{itemize}
    \item The map $p$ commutes with the $Pin(2)$-action on $\restr{(\shplus{\mbbX} \times_B \mscrW)}{U}$.
    \item The image of $p$ is contained in $\restr{(\shplus{\mbbX} \times_B \mscrW^\lambda)}{U}$.
    \item $\sup_{b \in U} \norm{p_b} < \infty$.
    \item If $R \leq \norm{\tilde v}_{\restr{(\shplus{\mbbX} \times_B \mscrV)}{U}} \leq \sqrt{2} R$, then
          \[
            \norm{(1 - p) Q (\tilde v)}_{\shplus{\mbbX} \times_B \mscrW} < \varepsilon
          \]
          holds.
  \end{itemize}
  The topology on $\mcalP(R, U, \varepsilon, \lambda)$ is introduced by the $\sup$-norm over $U$ of the operator norms.

  Also, let $\tilde \mcalA$ be the set of elements $(R, U, \varepsilon, \lambda)$ of $\mcalA$ such that $\mcalP(R, U, \varepsilon, \lambda)$ is nonempty.
\end{definition}

Before constructing the finite-dimensional approximation, we examine the properties of the set of auxiliary data. The following lemma shows that the set of all open subsets $U$ of $B$ where a finite-dimensional approximation is possible covers $B$. This is necessary to construct a spin structure on $\hplus{\mbbX}$ over the whole of $B$ by gluing. The proof is similar to Furuta\cite[Lemma 3.2, Lemma 3.3]{Furuta-11/8-inequality2001}.

\begin{lemma}\label{lem4:cov of B}
  \begin{enumarabicp}
    \item Fix $b \in B$. There exists $R > 0$ such that if $R \leq \norm{\tilde v}_{\shplus{\mbbX} \times_B \mscrV}$ we have $F(\tilde v) \neq 0$.
    \item Fix $b \in B$ and $R > 0$ satisfying the property in (1). Take any $R^\prime \geq R$. Then there exist an open neighborhood $U$ of $b$ and positive real numbers $\varepsilon$, $\lambda$ such that
    \[
      (R^\pprime, U, \varepsilon, \lambda) \in \tilde \mcalA
    \]
    holds for any $R^\pprime \in [R, R^\prime]$. In particular, the set
    \[
      \set{U}{\text{There exist $R$, $\varepsilon$, $\lambda$ such that $(R, U, \varepsilon, \lambda) \in \tilde \mcalA$}}
    \]
    is an open cover of $B$.
    \item The set $\mcalP(R, U, \varepsilon, \lambda)$ is contractible for any $(R, U, \varepsilon, \lambda) \in \tilde \mcalA$.
  \end{enumarabicp}
\end{lemma}

\begin{proof}
  The statement of (1) is a consequence of the compactness of the moduli space of the Seiberg--Witten equations. (2) follows from the same argument as in Furuta\cite[Lemma 3.2, Lemma3.3]{Furuta-11/8-inequality2001}. The contractibility in (3) is proved by a linear homotopy.
\end{proof}

We construct the finite-dimensional approximation.

First we explain the notation. The spaces $\mscrV$ and $\mscrW$ can be divided into the parts $\mscrV_\HB$, $\mscrW_\HB$ on which $U(1)$ acts with weight 1, and the parts $\mscrV_\R$, $\mscrW_\R$ on which the $U(1)$-action is trivial. We denote the $\mscrV_\HB$ component of $v \in \mscrV$ by $v_\HB$ and the $\mscrV_\R$ component by $v_\R$. We use the same notation for $\shplus{\mbbX} \times_B \mscrV$.

Take an $R > 0$. Let $B^\prime_R(\mscrV^\lambda)$ be the topological disk bundle over $U$ consisting of all $v \in \mscrV^\lambda$ satisfying $\norm{v_\HB} \leq R$ and $\norm{v_\R} \leq R$. Also, let $\partial B^\prime_R(\mscrV^\lambda)$ be the sphere bundle over $U$ consisting of all points $v \in B^\prime_R(\mscrV^\lambda)$ satisfying $\norm{v_\HB} = R$ or $\norm{v_\R} = R$.

\begin{definition}\label{def4:fda}
  Take $(R, U, \varepsilon, \lambda) \in \tilde \mcalA$. Choose $p \in \mcalP(R, U, \varepsilon, \lambda)$. Then, define
  \[
    \map{D^\lambda}{\restr{(\shplus{\mbbX} \times_B \mscrV^\lambda)}{U}}{\restr{(\shplus{\mbbX} \times_B \mscrW^\lambda)}{U}}
  \]
  as the restriction of $D$ to $\restr{(\shplus{\mbbX} \times_B \mscrV^\lambda)}{U}$, and define
  \[
    \map{Q^\lambda}{\restr{(\shplus{\mbbX} \times_B \mscrV^\lambda)}{U}}{\restr{(\shplus{\mbbX} \times_B \mscrW^\lambda)}{U}}
  \]
  by $Q^\lambda = p \circ Q$. Finally, define
  \[
    \map{F^\lambda = D^\lambda + Q^\lambda - \Delta}{\restr{(\shplus{\mbbX} \times_B B^\prime_R(\mscrV^\lambda))}{U}}{\restr{(\shplus{\mbbX} \times_B \mscrW^\lambda)}{U}}.
  \]
  We call $F^\lambda$ a finite-dimensional approximation of $F$.
\end{definition}

We end this subsection by stating a proposition about the relationship between $\mscrV^\lambda_\R$ and $\mscrW^\lambda_\R$. The proof is straightforward.

\begin{proposition}\label{prop4:split of vb}
  Assume that $\lambda$ is not an eigenvalue of $D^\ast D$ over an open subset $U$ of $B$. Then there is a natural isomorphism of finite-dimensional $Pin(2)$-equivariant vector bundles over $U$
  \[
    \hplus{\restr{\mbbX}{U}} \oplus \restr{\mscrV^\lambda_\R}{U} \cong \restr{\mscrW^\lambda_\R}{U}.
  \]
  Here $\hplus{\mbbX} \to \mscrW^\lambda_\R$ is the inclusion, and $\mscrV^\lambda_\R \to \mscrW^\lambda_\R$ is defined by $D^\lambda$.
\end{proposition}

\subsection{Abstraction of the properties of finite-dimensional approximations}\label{ssec4:prpty fda}

In this subsection, we formulate a concept that abstracts the properties of the finite-dimensional approximations. First, we perform an abstraction with families of 4-dimensional closed spin manifolds $X$ satisfying $b_1(X) = 0$ in mind. Next, we formulate a concept with additional conditions satisfied in the case where $X$ is a homotopy $K3$ surface.

\begin{definition}\label{def4:fda model}
  A tuple
  \[
    \mscrF = (U, E, V, W, i, D, F)
  \]
  is said to be a model of FDA for families of spin closed 4-manifolds with $b_1 = 0$ if it satisfies the following properties.
  \begin{enumarabicp}
    \item $U$ is a topological space.
    \item $E \to U$ is a real vector bundle equipped with a metric.
    \item $V$, $W$ are finite-dimensional vector bundles over $U$ given in the form
    \[
      V = V_\HB \oplus V_\R,\ W = W_\HB \oplus W_\R.
    \]
    Here, $V_\HB$, $W_\HB$ are quaternionic vector bundles, and $V_\R$, $W_\R$ are real vector bundles. We consider the quaternionic action as right multiplication. We define the right $Pin(2)$-action on $V$ and $W$ as follows: on the quaternionic part, we act via the natural inclusion $Pin(2) \to \HB$, and on the real part, we act via $Pin(2) \to \{\pm 1\}$.
    \item $V$ and $W$ are equipped with $Pin(2)$-invariant metrics.
    \item $\map{i}{E}{W_\R}$ is an injective $Pin(2)$-equivariant fiberwise linear bundle map. Here, we define the $Pin(2)$-action on $E$ via $Pin(2) \to \{\pm 1\}$.
    \item $\map{D}{V}{W}$ is the sum of a $Pin(2)$-equivariant fiberwise linear bundle map
    \[
      \map{D_\R}{V_\R}{W_\R}
    \]
    and a $Pin(2)$-equivariant fiberwise linear bundle map
    \[
      \map{D_\HB}{V_\HB}{W_\HB}.
    \]
    \item The map
    \[
      \map{i + D_\R}{E \oplus V_\R}{W_\R}
    \]
    is a metric-preserving isomorphism.
    \item $\map{F}{S(E) \times_U B^\prime(V)}{S(E) \times_U W}$ is a fiberwise smooth map preserving the fibers of $S(E)$. (It varies continuously with respect to the $C^\infty$ topology on the fibers.) Here, $B^\prime(V)$ is the topological disk bundle consisting of all $\tilde v = (v_\HB, v_\R) \in V$ satisfying $\norm{v_\HB} \leq 1,\ \norm{v_\R} \leq 1$.
    \item $F$ does not vanish for elements of the boundary
    \[
      S(E) \times_U \partial B^\prime(V)
    \]
    and for elements of $S(E) \times_U B^\prime(V)$ whose $V_\HB$ component is 0.
  \end{enumarabicp}
\end{definition}

\begin{example}\label{eg4:fda from sw}
  Using the notation of \cref{def4:fda}, a model of FDA for families of spin closed 4-manifolds with $b_1 = 0$ is defined from the finite-dimensional approximation of the Seiberg--Witten map for a family in the following way:
  \begin{align*}
    U = U,\ E = \restr{\hplus{\mbbX}}{U},\ V = \restr{\mscrV}{U},\ W = \restr{\mscrW}{U}, \\
    i \colon \shplus{\restr{\mbbX}{U}} \xhookrightarrow{} \restr{\mscrW^\lambda_\R}{U},\ D = D^\lambda \circ m_R,\ F = F^\lambda \circ m_R.
  \end{align*}
  Here,
  \[
    \map{m_R}{\mscrV}{\mscrV}
  \]
  denotes the $R$-times map. Note that $D^\lambda \circ m_R$ is not necessarily a metric-preserving map between $\restr{\mscrV^\lambda_\R}{U}$ and $\restr{\mscrW^\lambda_\R}{U}$ defined from \cref{eq4:norm of Hilb} in general. Therefore, when we view these as a model of FDA, we keep the metric on $\restr{\mscrV^\lambda_\R}{U}$ unchanged and redefine the metric on $\restr{\mscrW^\lambda_\R}{U}$ so that
  \[
    \map{i + D^\lambda \circ m_R}{\restr{\hplus{\mbbX}}{U} \oplus \restr{\mscrV^\lambda_\R}{U}}{\restr{\mscrW^\lambda_\R}{U}}
  \]
  becomes an isomorphism preserving the metric.
\end{example}

\begin{definition}\label{def4:iso of fda}
  Let $\mscrF = (U, E, V, W, i, D, F)$ be a model of FDA for families of spin closed 4-manifolds with $b_1 = 0$. An automorphism of $\mscrF$ is a pair of $Pin(2)$-equivariant metric-preserving isomorphisms of vector bundles
  \[
    \map{f}{V}{V},\ \map{g}{W}{W}
  \]
  satisfying
  \[
    g \circ i = i,\ D \circ f = g \circ D,\ F \circ f = g \circ F.
  \]
\end{definition}

\begin{example}\label{eg4:pm 1 for fda}
  Let $\mscrF = (U, E, V, W, i, D, F)$ be a model of FDA for families of spin closed 4-manifolds with $b_1 = 0$. Then, acting $\{\pm 1\}$ on $V$ and $W$ gives an automorphism of $\mscrF$.
\end{example}

When $X$ is a homotopy $K3$ surface, the finite-dimensional approximation satisfies three additional properties. We explain those properties.
Let
\[
  \mscrF = (U, E, V, W, i, D, F)
\]
be a model of FDA for families of closed spin 4-manifolds with $b_1 = 0$. The first additional condition is that the rank of $E$ is 3. The second is that, by what was stated in \cref{ref-thm1:MS}, a canonical orientation can be given to $E$. The third condition is that, for each $b \in B$, the so-called families Seiberg--Witten invariant
\[
  c(\mscrF_b) \in H^2(S(E_b); \Z)
\]
is a positive generator. Here, we review the construction of $c(\mscrF_b)$.  Let
\[
  \tmcalV = S(E) \times S(V_\HB) \times \R^+ \times V_\R, \ \tmcalW = S(E) \times S(V_\HB) \times W_\HB \times W_\R
\]
where $\R^+$ denotes $\R$ with the trivial $Pin(2)$-action. These are vector bundles over $S(E) \times S(V_\HB)$, and $Pin(2)$ acts freely on them. Also, let
\[
  B^\prime(\tmcalV) = S(E) \times S(V_\HB) \times B(\R^+) \times B(V_\R).
\]
Define $\map{\tmcalF^\prime}{B^\prime(\tmcalV)}{\tmcalW}$ by
\[
  \tmcalF^\prime(e, v_\HB, t, v_\R) = F\parenlr{e, v_\HB, \frac{t + 1}{2}v_\R}.
\]
(To be precise, since $S(V_\HB)$ is not included in the codomain of $F$, the above notation does not give a map to $\tmcalW$. We simply concatenate the $S(V_\HB)$ component from the domain side for the $S(V_\HB)$ component.) This is a smooth $Pin(2)$-equivariant map.

We quotient this map by the $U(1)$-action. Let
\begin{align*}
  \mcalV & = \tmcalV / U(1) = S(E) \times S(V_\HB)/U(1) \times \R^+ \times V_\R,    \\
  \mcalW & = \tmcalW / U(1) = S(E) \times (S(V_\HB) \times W_\HB)/U(1) \times W_\R.
\end{align*}
Also, let
\[
  B^\prime(\mcalV) = S(E) \times S(V_\HB)/U(1) \times B(\R^+) \times B(V_\R)
\]
These spaces are equipped with a $\Z/2$-action defined from the action of $j \in Pin(2)$. Quotienting $\tmcalF$ by the $U(1)$-action defines a smooth $\Z/2$-equivariant map
\begin{align}\label{eq4:deformed F}
  \map{\mcalF^\prime}{B^\prime(\mcalV)}{\mcalW}.
\end{align}
Denote the restriction of this map to the fiber over $b \in B$ by
\[
  \map{\mcalF_b^\prime}{B^\prime(\mcalV_b)}{\mcalW_b}
\]
To  construct $c(\mscrF_b)$, we fix an auxiliary orientation of $V_{\R, b}$. Using the isomorphism
\[
  \map{i + D}{E_b \oplus V_{\R. b}}{W_{\R, b}}
\]
and the orientation of $E_b$, an orientation is also induced on $W_{\R, b}$. This gives an orientation on $\mcalV_b$ as a smooth manifold and an orientation on $\mcalW_b$ as a vector bundle over $S(E_b) \times S(V_{\HB, b}) / U(1)$. Let
\[
  x = c_1(\mcalO(1)) \in H^2(\mcalV_b; \Z).
\]
Here, $\mcalO(1)$ denotes the pullback to $\mcalV_b$ of the canonical line bundle over $S(V_\HB)/U(1)$. Also, let
\[
  \tau_{\mcalW_b} \in H^{\rank \mscrW}(\mcalW_b, \mcalW_b \setminus (S(E_b) \times S(V_{\HB, b})) / U(1); \Z)
\]
be the Thom class of the vector bundle $\mcalW_b \to (S(E_b) \times S(V_{\HB, b}) / U(1))$. Under these preparations, let
\begin{align}\label{eq4:fam sw inv}
  c(\mscrF_b) = \int_{B^\prime(\mscrV_b) \to S(E_b)} x \cdot \mcalF_b^{\prime \ast} \tau_{\mcalW_b} \in H^2(S(E_b); \Z).
\end{align}
The cohomology class $c(\mscrF_b)$ is independent of the choice of the orientation on $V_{\R, b}$.

\begin{definition}\label{def4:fda model hK3}
  A model of FDA for families of h$K3$ is a model
  \[
    \mscrF = (U, E, V, W, i, D, F)
  \]
  of FDA for families of spin closed 4-manifolds with $b_1 = 0$ satisfying the following properties:
  \begin{enumarabicp}
    \item The rank of $E$ is 3.
    \item An orientation is given on $E$.
    \item For any $b \in U$, $c(\mscrF_b)$ is a positive generator.
  \end{enumarabicp}
\end{definition}

\begin{proposition}
  Let $\mscrF = (U, E, V, W, i, D, F)$ be a model of FDA for families of spin closed 4-manifolds with $b_1 = 0$ constructed by the method of \cref{eg4:fda from sw} for $X$ a homotopy $K3$ surface. Then $\mscrF$ is a model of FDA for families of h$K3$.
\end{proposition}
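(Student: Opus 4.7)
The plan is to verify the three conditions of \cref{def3:fda model hK3} in turn. Conditions (1) and (2) are immediate from the construction in \cref{eg3:fda from sw}: one has $E = \restr{\hplus{\mbbX}}{U}$, which has rank $b^+(X) = 3$ because $X$ is a homotopy $K3$; and the canonical orientation on $\hplus{\mbbX}$ supplied by \cref{ref-thm1:MS} and fixed in \cref{rem1:ori of H^+} equips $E$ with the required orientation.

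The substance is (3). The plan is to reduce $c(\mscrF_b)$ to a classical Seiberg--Witten count on the single $4$-manifold $\mbbX_b$, then apply Morgan--Szab\'{o}. Since $c(\mscrF_b)$ depends only on the restriction of $\mscrF$ to the point $b$, I may assume $U = \{b\}$, so all bundles become ordinary vector spaces and $S(E_b) \cong S^2$. Next I unwind the definition in \cref{eq3:fam sw inv}: the pullback $\mcalF_b^{\prime\,\ast}\tau_{\mcalW_b}$ localizes the fiber integration to the compact zero locus of $\mcalF'_b$ inside $B^\prime(\mscrV_b)/U(1)$, while the cup product with $x = c_1(\mcalO(1))$ accounts for the remaining $U(1)$-equivariant multiplicity. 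Integration along the fiber of $B^\prime(\mscrV_b) \to S(E_b)$ thus produces, for each $e \in S(E_b)$, the signed count of irreducible solutions of the Seiberg--Witten equations on $\mbbX_b$ perturbed by a nonzero element of $\hplus{\mbbX_b}$. By the standard Bauer--Furuta identification, this count is the classical Seiberg--Witten invariant of $(\mbbX_b,\mfraks)$ (it is independent of the perturbation since $b^+(X)=3\geq 2$).

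By \cref{ref-thm1:MS}, for a homotopy $K3$ surface this invariant equals $\pm 1$, and the orientation of $\hplus{\mbbX}$ was fixed in \cref{rem1:ori of H^+} precisely so that the sign is $+1$. Hence $c(\mscrF_b) = +1$ under the identification $H^2(S(E_b);\Z)\cong \Z$, which is the positive generator; this gives (3).

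The main obstacle I expect is the identification in the middle paragraph, i.e.\ matching the cohomological integration defining $c(\mscrF_b)$ to the classical Seiberg--Witten invariant. This is standard Bauer--Furuta bookkeeping, but two points require care: (a) the map $\mcalF'_b$ is only guaranteed to be nonvanishing on the boundary $\partial B^\prime(\mscrV_b)$ and on the reducible locus where $v_\HB = 0$ (condition (9) of \cref{def3:fda model}), so these precise non-vanishing statements must be invoked to make the Thom-pullback a well-defined compactly supported class; and (b) the auxiliary $\R^+$-factor together with the rescaling $\frac{t+1}{2}v_\R$ in the construction of $\tmcalF^\prime$ are exactly what convert the restricted finite-dimensional approximation into a proper $\Z/2$-equivariant map of pairs whose pushforward can be computed as an honest mapping degree.
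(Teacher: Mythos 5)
Your verification of conditions (1) and (2) of \cref{def3:fda model hK3} matches the paper (rank $3$ from $b^+(X)=3$, orientation from \cref{ref-thm1:MS}). The treatment of condition (3), however, contains a genuine error: $c(\mscrF_b)$ is \emph{not} the classical Seiberg--Witten invariant of $(\mbbX_b,\mfraks)$, and identifying the two breaks the argument in two ways.

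First, the identification itself is wrong. A ``signed count of solutions for each $e\in S(E_b)$'' would be a class in $H^0(S(E_b);\Z)$, whereas $c(\mscrF_b)$ is defined in \cref{eq3:fam sw inv} as a class in $H^2(S(E_b);\Z)$: the parametrized moduli space $(\mcalF_b')^{-1}(0)$ is $2$-dimensional over the $2$-sphere $S(E_b)=S(\hplus{\mbbX_b})$ of perturbations (since $\ind_\HB D^+=1$ for a spin structure on a homotopy $K3$), and $c(\mscrF_b)$ is the pushforward of $c_1(\mcalO(1))$ restricted to it. This measures how the base-point line bundle twists over the family of perturbations; it is a genuinely parametrized (wall-crossing type) invariant, not the fiberwise degree of $(\mcalF_b')^{-1}(0)\to S(E_b)$. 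The distinction is visible in \cref{rem3:split det line}: the classical SW invariant appears there as the \emph{virtual rank} of $\Ker\mcalD_\C$, while $c(\mscrF_b)$ is a \emph{first Chern class} of the index bundle --- two different pieces of data. Second, even if the identification held, your conclusion would not follow: \cref{ref-thm1:MS} (Morgan--Szab\'o) asserts only that the SW invariant of a homotopy $K3$ is \emph{odd}, not that it is $\pm1$, and there are homotopy $K3$ surfaces (e.g.\ knot-surgered $K3$'s) whose SW invariant for the spin structure is an odd integer of absolute value greater than $1$; condition (3) demands that $c(\mscrF_b)$ be exactly the positive generator. The paper's proof instead cites Baraglia--Konno \cite[Theorem 1.1, Theorem 1.8]{Baraglia--Konno-2022}, with the computation up to sign coming from the family wall-crossing formula of Li--Liu \cite[Theorem 4.10]{Li--Liu-family-wall-crossing2001}; some such input computing the parametrized invariant over the perturbation sphere is unavoidable here and cannot be replaced by Morgan--Szab\'o alone.
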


\begin{proof}
  \cref{def4:fda model hK3}(2) is a consequence of \cref{ref-thm1:MS}. \cref{def4:fda model hK3}(3) is a consequence of Baraglia--Konno\cite[Theorem 1.1, Theorem 1.8]{Baraglia--Konno-2022}. (The calculation excluding the sign is due to Li--Liu\cite[Theorem 4.10]{Li--Liu-family-wall-crossing2001}.)
\end{proof}

The rest of this section is devoted to the proof of the following theorem.

\begin{theorem}\label{thm4:spin from fda}
  Let $\mscrF = (U, E, V, W, D, i, F)$ be a model of FDA for families of h$K3$. Further, assume that $U$ is locally simply-connected and homeomorphic to an open set of some paracompact Hausdorff space. Then the following statements hold:
  \begin{enumarabicp}
    \item A spin structure $\mfrakt_\mscrF$ on $E$ is canonically constructed from $\mscrF$.
    \item The self-isomorphisms $\pm 1$ of $\mscrF$ (See \cref{eg4:pm 1 for fda}.) induce self-isomorphisms of $\mfrakt_\mscrF$ covering the identity on $E$. Under this correspondence, the self-isomorphism $+1$ of $\mscrF$ goes to the self-isomorphism $+1$ of $\mfrakt_\mscrF$, and $-1$ goes to $-1$. (For the meaning of $\pm 1$ as isomorphisms of a spin structure, see \cref{rem1:pm 1}.)
  \end{enumarabicp}
\end{theorem}

\subsection{Construction of a spin structure from a model of FDA}\label{ssec4:spin from fda}

In this subsection, we fix a model
\[
  \mscrF = (U, E, V, W, D, i, F)
\]
of FDA for families of h$K3$.

The goal of this subsection is to construct a spin structure on $E$ from $\mscrF$ when $U$ is a point. The case where $U$ is not a point needs a slight modification, and it is described in \cref{ssec4:base not pt}. The strategy is to construct a triple for spin structure on $E$ from $\mscrF$.

When $U$ is a point, $E$ is a real vector space of rank 3 with an orientation and a metric. Also, $V$, $W$ are direct sums of real vector spaces and quaternionic vector spaces. Therefore,
\[
  \shplus{\mbbX} \times_U V = S(E) \times V_\HB \times V_\R,\ \shplus{\mbbX} \times_U W = S(E) \times W_\HB \times W_\R.
\]

\subsubsection{Modification of $F$}\label{sssec4:deform F}

We use the deformation of $F$ in \cref{ssec4:prpty fda} to construct $c(\mscrF_b)$ here as well. Let
\[
  \map{\mcalF^\prime}{B^\prime(\mcalV)}{\mcalW}
\]
be the map in \cref{eq4:deformed F}. Later, we consider a family of Dirac operators on $\mcalV$, and for that purpose, it is desirable for the domain of $\mcalF^\prime$ to be $\mcalV$ rather than $B^\prime(\mcalV)$. By appropriately modifying $\mcalF^\prime$, we make the domain to be the whole of $\mcalV$. Take a smooth function
\begin{align}\label{eq4:rho}
  \map{\rho}{[0, \infty)}{[0,1]}
\end{align}
satisfying
\begin{itemize}
  \item $\rho(t) = t$ in some neighborhood of $t = 0$,
  \item $\rho(t) = 1$ for $t \geq 1$.
\end{itemize}
By adjusting the length of vectors using $\rho$, smooth maps
\[
  \R^+ \to B(\R^+),\ V_\R \to B(V_\R)
\]
are constructed, and we get a smooth map from $\mcalV$ to $B^\prime(\mcalV)$. Composing this with $B^\prime(\mcalV)$, we obtain
\begin{align}\label{eq4:normalized F}.
  \map{\mcalF}{\mcalV}{\mcalW}
\end{align}

We prepare the notation used in the following sections. First, let
\begin{align*}
  \mcalW_\HB & = S(E) \times (S(V_\HB) \times W_\HB)/U(1), \\
  \mcalW_\R  & = S(E) \times S(V_\HB)/U(1) \times W_\R.
\end{align*}
$\mcalW_\HB$ and $\mcalW_\R$ have $\Z/2$-invariant metrics as vector bundles over $S(E) \times S(V_\HB)/U(1)$.

$\R^+ \times V_\R$ has a Riemannian metric as a manifold from the metric as a vector space. Also, we fix a $\Z/2$-invariant Riemannian metric on $S(V_\HB)/U(1)$. These data induce a $\Z/2$-invariant Riemannian metric on the fiberwise tangent bundle $T_{S(E)} \mcalV$ of $\mcalV$.

\subsubsection{Construction of a family of Fredholm operators}\label{sssec4:Fred op}
To construct a triple for spin structure on $E$, we need to construct a complex line bundle over $S(E)$, among others. The idea is to construct it as the determinant line bundle of a certain family of Fredholm operators over $S(E)$. The family of Fredholm operators is obtained as the family of Dirac operators associated with the family of Clifford bundles along the fibers of $\mcalV \to S(E)$. The construction of the family of Clifford bundles is basically what appears in the construction of the $K$-theoretic mapping degree of $\mcalF$ in \cref{eq4:normalized F}.

Here, we note three things. First, since the ultimate goal of this section is the canonical construction of a spin structure on $E$, it is important to confirm that a canonical construction can be chosen for the family of Clifford bundles. Next, since the fibers of $\mcalV \to S(E)$ are non-compact, we need to construct, in addition to the Clifford bundle, a family $h$ of degree 0 Hermitian maps with compact support. Finally, as a reflection of the fact that a $\Z/2$-action was present on $\mcalF$, we will confirm that a degree-preserving action (which is actually a $\Z/4$-action) is also present on the family of Clifford bundles.

We begin the construction of the family of Clifford bundles on $\mcalV \to S(E)$. The strategy is to obtain it as the external tensor product of the following three Clifford bundles:
\begin{itemize}
  \item The family $(S_\HB, c_\HB, \nabla_\HB)$ of Clifford bundles along the fibers of
        \[
          S(E) \times S(V_\HB)/U(1) \to S(E).
        \]
  \item The family $(S_\R, c_\R, \nabla_\R)$ of Clifford bundles along the fibers of
        \[
          S(E) \times (\R^+ \times V_\R) \to S(E).
        \]
  \item The family $(S_E, c_E, \nabla_E)$ of Clifford bundles determined from a $\Z/2$-graded vector bundle $S_E$ with a metric over $S(E)$.
\end{itemize}
The first two families of Clifford bundles are constructed as the pullback of Clifford bundles on $S(V_\HB)/U(1)$ and $\R^+ \times V_\R$, respectively. For the third one, we use the fact that a vector bundle over $S(E)$ can be regarded as a family of Clifford bundles along the fiber bundle $S(E) \to S(E)$ whose fibers are points. (There is a unique Clifford bundle structure on a $\Z/2$-graded vector space with a metric over a point.)

We first describe the construction of the family $(S_\HB, c_\HB, \nabla_\HB)$ of Clifford bundles over $S(E) \times S(V_\HB)/U(1)$. As announced, this is constructed as the pullback of a Clifford bundle on $S(V_\HB)/U(1)$. To save notation, we also denote it by $(S_\HB, c_\HB, \nabla_\HB)$. Specifically, it is defined as follows. First, let
\[
  \mcalW^\prime_\HB = (S(V_\HB) \times W_\HB) / U(1)
\]
and let $T_{V_\HB}$ be the tangent bundle of $S(V_\HB)/U(1)$. As a vector bundle over $S(V_\HB)/U(1)$, let
\[
  S_\HB = (\underline{\C} \oplus \mcalO(1)) \otimes \Lambda^\ast_\C \mcalW^\prime_\HB \otimes \Lambda^\ast_\C T_{V_\HB}.
\]
Here, the tensor product is taken as $\Z/2$-graded vector bundles.
The degree of
\[
  \underline{\C} \oplus \mcalO(1)
\]
is 0 for $\underline{\C}$ and 1 for $\mcalO(1)$. The Clifford multiplication $c_\HB(v_\HB)$ by $v_\HB \in T_{V_\HB}$ is defined by
\[
  \varepsilon \otimes \varepsilon \otimes (v_\HB^\wedge - v_\HB^\lrcorner)
\]
Here $\varepsilon$ is the involution representing the $\Z/2$-grading of each tensor product component. The connection $\nabla_\HB$ on $S_\HB$ can be anything as long as it has certain properties. (The canonical connection is one such connection.) The formulation of the needed property involves the $\Z/4$-action $\tau_\HB$ on $S_\HB$, so it is deferred to \cref{def4:connection on S_HB}.

Next, we describe the construction of the family $(S_\R, c_\R, \nabla_\R)$ of Clifford bundles over $S(E) \times (\R^+ \times V_\R)$. As before, this is also constructed as the pullback of a Clifford bundle on $\R^+ \times V_\R$. We also use the notation $(S_\R, c_\R, \nabla_\R)$ for this Clifford bundle. First, as a vector bundle over $\R^+ \times V_\R$, let
\begin{align}\label{eq4:S_R}
  S_\R = (\R^+ \times V_\R) \times (\Lambda^\ast_\R \R^+ \otimes_\R \C) \otimes (\Lambda^\ast_\R V_\R \otimes_\R \C).
\end{align}
The Clifford multiplication $c_\R(v_+, v_\R)$ by $(v_+, v_\R) \in T(\R^+ \times V_\R)$ is defined by
\[
  (v_+^\wedge - v_+^\lrcorner) \otimes 1 + \varepsilon \otimes (v_\R^\wedge - v_\R^\lrcorner).
\]
We take the Levi--Civita connection as the connection $\nabla_\R$ on $S_\R$.

Finally, we construct the Clifford bundle $(S_E, c_E, \nabla_E)$ along the family of fiber bundles $S(E) \to S(E)$ whose fibers are points. Since the fibers are points, by constructing a $\Z/2$-graded vector bundle $S_E$ over $S(E)$, $c_E$ and $\nabla_E$ are uniquely determined. So we only describe the definition of $S_E$. As a vector bundle over $S(E)$, let
\begin{align}\label{eq4:S_E}
  S_E = \underline{\C} \oplus TS(E)
\end{align}
where $TS(E)$ is the tangent bundle of $S(E)$. Note that since $TS(E)$ is a real vector bundle of rank 2 with an orientation and a metric, $TS(E)$ is a Hermitian line bundle.

Using the above three families of Clifford bundles, we define the family $(\mbfS, c, \nabla)$ of Clifford bundles along the fibers of
\[
  \mcalV = S(E) \times S(V_\HB)/U(1) \times \R^+ \times V_\R \to S(E)
\]
by
\[
  (\mbfS, c, \nabla) = (S_E, c_E, \nabla_E) \boxtimes (S_\HB, c_\HB, \nabla_\HB) \boxtimes (S_\R, c_\R, \nabla_\R).
\]
Specifically,
\begin{align*}
  \mbfS  & = S_E \boxtimes S_\HB \boxtimes S_\R,                                                                                       \\
  c      & = c_E \boxtimes 1 \boxtimes 1 + \varepsilon \boxtimes c_\HB \boxtimes 1 + \varepsilon \boxtimes \varepsilon \boxtimes c_\R, \\
  \nabla & = \nabla_E \boxtimes 1 \boxtimes 1 + 1 \boxtimes \nabla_\HB \boxtimes 1 + 1 \boxtimes 1 \boxtimes \nabla_\R.
\end{align*}

As noted at the beginning of this subsection, we will construct two more maps on $\mbfS$. One is a section $\mbfh$ of the vector bundle
\[
  \Herm(\mbfS) \to \mcalV
\]
obtained by bundling all the Hermitian maps on each fiber of $\mbfS \to \mcalV$, whose support is compact. The other is a fiberwise anti-linear map $\tau$ on $\mbfS$ that lifts the $\Z/2$-action on $\mcalV$ and satisfies $\tau^4 = 1$.

We construct $\mbfh$. To begin with, we check the structure of $\mcalW$. It is the direct sum of
\begin{align*}
  \mcalW_\HB & = S(E) \times (S(V_\HB) \times W_\HB)/U(1), \\
  \mcalW_\R  & = S(E) \times S(V_\HB)/U(1) \times W_\R.
\end{align*}
The space $\mcalW_\R$ further has a more detailed direct sum decomposition. Ignoring the $S(V_\HB)/U(1)$ part for now, we consider
\[
  \mcalW_\R^\prime = S(E) \times W_\R.
\]
By \cref{def4:fda model}(7) of a model of FDA, there is a metric-preserving isomorphism
\[
  W_\R \cong E \oplus V_\R.
\]
Moreover, on $S(E) \times E$, there is a tautological section. The vector bundle consisting of all vectors orthogonal to this section is isomorphic to $TS(E)$, so we have a metric-preserving isomorphism
\[
  \mcalW_\R^\prime \cong (S(E) \times E) \oplus (S(E) \times V_\R) \cong (S(E) \times \R^+) \oplus TS(E) \oplus (S(E) \times V_\R).
\]
Summarizing the above, an element $w \in \mcalW$ corresponds one-to-one with a tuple consisting of elements of
\[
  (S(V_\HB) \times W_\HB)/U(1),\ \R^+,\ TS(E),\ V_\R.
\]
We denote the tuple corresponding to $w \in \mcalW$ as
\[
  w_\HB,\ w_+,\ w_{TS(E)},\ w_{V_\R}.
\]

We return to the construction of $\mbfh$. It is expressed in the form
\begin{align}\label{eq4:herm map}
  \mbfh = h_E \otimes 1 \otimes 1 + \varepsilon \otimes h_\HB \otimes 1 + \varepsilon \otimes \varepsilon \otimes h_\R
\end{align}
using the pullbacks to $\mcalV$ of the sections
\[
  h_E,\ h_\HB,\ h_\R
\]
of $\Herm(S_E)$, $\Herm(S_\HB)$, $\Herm(S_\R)$, respectively. For $v \in \mcalV$, these three sections are defined by
\begin{align}\label{eq4:h}
  \begin{split}
    h_E   & = i(\mcalF(v)_{TS(E)}^\wedge - \mcalF(v)_{TS(E)}^\lrcorner),                                                                           \\
    h_\HB & = \varepsilon \otimes i(\mcalF(v)_\HB^\wedge - \mcalF(v)_\HB^\lrcorner) \otimes 1,                                                     \\
    h_\R  & = (\mcalF(v)_+^\wedge + \mcalF(v)_+^\lrcorner) \otimes 1 + \varepsilon \otimes (\mcalF(v)_{V_\R}^\wedge + \mcalF(v)_{V_\R}^\lrcorner).
  \end{split}
\end{align}

Next, we construct $\tau$. It is expressed as
\[
  \tau = \tau_E \otimes \tau_\HB \otimes \tau_\R
\]
using fiberwise anti-linear maps $\tau_E,\ \tau_\HB,\ \tau_\R$ defined on $S_E$, $S_\HB$, $S_\R$, respectively. These three maps are defined as follows. First, $\tau_E$ is defined by
\begin{align*}
  \tau_E(e, (\alpha, e^\prime)) = (-e, (\bar\alpha, -e^\prime)), &  & e \in S(E),\ \alpha \in \underline \C,\ e^\prime \in T_e S(E), \\
\end{align*}
Next, $\tau_\HB$ is constructed as the tensor product of maps defined on
\[
  \underline{\C} \oplus \mcalO(1),\ \Lambda^\ast_\C \mcalW_\HB,\ \Lambda^\ast_\C T_{V_\HB}
\]
respectively. On $S(V_\HB) \times \C$, we define the map
\[
  (v_\HB, \alpha) \mapsto (v_\HB \cdot j, \bar \alpha)
\]
This induces new maps on $\underline{\C}$, $\mcalO(1)$ which are obtained as quotients of $S(V_\HB) \times \C$ by different $U(1)$-actions, and these are the desired maps. Moreover, on
\[
  \Lambda^\ast_\C \mcalW_\HB,\ \Lambda^\ast_\C T_{V_\HB}
\]
the desired maps are the ones induced by right-multiplying $j$ on
\[
  \mcalW_\HB,\ T_{V_\HB}.
\]
Finally, we define $\tau_\R$ as the tensor product of the map induced on the exterior algebra by $-1$ on $\R^+ \times V_\R$, and the complex conjugation map on $\C$. This completes the definition of $\tau$.

At this point, we specify the needed property for $\nabla_\HB$.

\begin{definition}\label{def4:connection on S_HB}
  Let $\mcalC_\HB$ be the set of unitary connections on $S_\HB$ satisfying the following properties:
  \begin{itemize}
    \item The triple $(S_\HB, c_\HB, \nabla_\HB)$ is a family of Clifford bundles.
    \item Let $\tilde \tau_\HB$ denote the action induced by $\tau_\HB$ on
          \[
            \coprod_{e \in S(E)} \Omega^\ast(S_{\HB, e}).
          \]
          Then we have
          \[
            \tilde \tau_\HB \circ \nabla_\HB = \nabla_\HB \circ \tilde \tau_\HB.
          \]
  \end{itemize}
\end{definition}

\begin{lemma}
  $\mcalC_\HB$ is nonempty and contractible.
\end{lemma}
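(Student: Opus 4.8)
The plan is to show that $\mcalC_\HB$ is a nonempty affine subspace of the (real) affine space of all unitary connections on $S_\HB$; since every nonempty affine subspace is contractible, this yields both assertions at once. Recall that the unitary connections on the Hermitian $\Z/2$-graded bundle $S_\HB$ form a nonempty affine space modeled on the real vector space of $1$-forms along the fibers of $S(E) \times S(V_\HB)/U(1) \to S(E)$ with values in grading-preserving skew-Hermitian endomorphisms. Each of the two conditions defining $\mcalC_\HB$ is affine in $\nabla_\HB$: the Clifford-connection condition, because the difference of two Clifford connections (for the fixed fiberwise metric on $S(V_\HB)/U(1)$ and the Clifford multiplication $c_\HB$) is a $1$-form with values in endomorphisms commuting with $c_\HB$; and the equivariance condition $\tilde\tau_\HB \circ \nabla_\HB = \nabla_\HB \circ \tilde\tau_\HB$, because it is linear in $\nabla_\HB$. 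Hence $\mcalC_\HB$ will be contractible as soon as it is shown to be nonempty.

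For nonemptiness I would first produce a unitary Clifford connection on $S_\HB$ while ignoring equivariance. Since $S_\HB$ is pulled back from $S(V_\HB)/U(1)$, it suffices to do this on $S(V_\HB)/U(1)$: take the tensor-product connection built from the Chern connection on $\mcalO(1)$ together with the trivial connection on $\underline{\C}$, an arbitrary unitary connection on $\mcalW^\prime_\HB$ extended to $\Lambda^\ast_\C \mcalW^\prime_\HB$, and the Levi-Civita connection of the fixed Riemannian metric on $\Lambda^\ast_\C T_{V_\HB}$. This is a Clifford connection because $c_\HB(v_\HB) = \varepsilon \otimes \varepsilon \otimes (v_\HB^\wedge - v_\HB^\lrcorner)$ couples only the genuine Clifford multiplication on $\Lambda^\ast_\C T_{V_\HB}$ (which is parallel for Levi-Civita) with the grading involutions (which are parallel for any grading-preserving unitary connection). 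Next I would average this connection over the finite cyclic group $G$ generated by $\tau_\HB$, replacing it by $\tfrac{1}{\abs{G}}\sum_{g \in G} g^\ast \nabla$. The result is $G$-invariant, hence satisfies the equivariance condition, and it remains a unitary Clifford connection: the Clifford-connection condition is affine and is respected by each $g^\ast$ because the fiberwise metric and $c_\HB$ are $G$-equivariant by construction. (Possible conjugate-linearity of $\tau_\HB$ is harmless: connections form a real affine space, and the pullback of a unitary connection by a conjugate-linear isometry is again unitary.) Pulling the resulting connection back along $S(E) \times S(V_\HB)/U(1) \to S(V_\HB)/U(1)$ produces an element of $\mcalC_\HB$.

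Combining the two steps, $\mcalC_\HB$ is a nonempty affine subspace of the affine space of unitary connections, so the straight-line homotopy to any fixed element of $\mcalC_\HB$ contracts it. I do not anticipate a serious obstacle; the only points needing a little care are (i) checking that the two defining conditions are genuinely affine — which is precisely how the notion of family of Clifford bundles is arranged — and (ii) the equivariant averaging, where one must use that $c_\HB$ and the chosen fiberwise metric are $G$-equivariant and keep track of the conjugate-linear elements of $G$.
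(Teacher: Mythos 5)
Your argument is correct and follows essentially the same strategy as the paper: identify $\mcalC_\HB$ as a nonempty affine subspace of the affine space of unitary connections and contract it by a straight-line (linear) homotopy to a fixed element. The only stylistic difference is in how non-emptiness is obtained. The paper observes that $S_\HB$ is a Hermitian holomorphic bundle over $S(V_\HB)/U(1)$ and takes the associated standard (Chern-type) connection, asserting that it already lies in $\mcalC_\HB$; the point is that $\tau_\HB$ is anti-holomorphic and preserves the Hermitian structure, so the canonical connection is automatically $\tau_\HB$-compatible and no further modification is needed. You instead build a tensor-product Clifford connection component by component and then force equivariance by averaging over the finite cyclic group generated by $\tau_\HB$. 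That averaging step is harmless and even slightly more robust (it does not rely on recognizing the Chern connection as already equivariant), and your care in noting that the pullback by a conjugate-linear isometry is still a well-defined $\R$-linear operation on the real affine space of connections is exactly the right point to flag. Your explicit justification that both defining conditions are affine -- the Clifford condition because the difference of two Clifford connections is a one-form valued in endomorphisms commuting with $c_\HB$, and the $\tilde\tau_\HB$-equivariance condition because it is $\R$-linear -- is the substance behind the paper's terse ``proved by a linear homotopy,'' so you have supplied a fuller but equivalent argument.
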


\begin{proof}
  We first show that $\mcalC_\HB$ is nonempty. Since $S_\HB$ is a Hermitian holomorphic vector bundle over $S(V_\HB)/U(1)$, we can take a family of canonical connections. This is an element of $\mcalC_\HB$. The contractibility of $\mcalC_\HB$ is proved by a linear homotopy.
\end{proof}

The triple $(\mbfS, c, \nabla)$, $\mbfh$ and $\tau$ satisfy the following compatibility conditions.

\begin{proposition}\label{prop4:cmptbl h-c-t}
  \begin{enumarabicp}
    \item The maps $\mbfh$ and $c$ anti-commute.
    \item The following commutative diagrams hold.
    \[
      \begin{tikzcd}[column sep=large]
        T_{S(E)} \mcalV \times_{\mcalV} \mbfS \arrow[r, "c"] \arrow[d, "(\cdot j)\times \tau"] & \mbfS \arrow[d, "\tau"] \\
        T_{S(E)} \mcalV \times_{\mcalV} \mbfS \arrow[r, "c"] & \mbfS
      \end{tikzcd}
      \qquad
      \begin{tikzcd}[column sep=large]
        \mcalV \times_{\mcalV} \mbfS \arrow[r, "\mbfh"] \arrow[d, "(\cdot j)\times \tau"] & \mbfS \arrow[d, "\tau"] \\
        \mcalV \times_{\mcalV} \mbfS \arrow[r, "\mbfh"] & \mbfS
      \end{tikzcd}
    \]
    \item Let $\tilde \tau$ denote the action of $\tau$ on
    \[
      \coprod_{e \in S(E)} \Omega^\ast(\mbfS_e).
    \]
    Then we have
    \[
      \tilde \tau \circ \nabla = \nabla \circ \tilde \tau.
    \]
  \end{enumarabicp}
\end{proposition}

\begin{definition}\label{def4:Dirac type op}
  Let $\mcalD$ denote the family of Dirac operators constructed from the family of Clifford bundles $(\mbfS, c, \nabla)$.
\end{definition}

The compatibility of $\mcalD$ with $\mbfh$, $\tau$ is described as follows.

\begin{proposition}\label{prop4:cmptbl D-c-t}
  Let $\tilde \tau$ denote the action of $\tau$ on
  \[
    \coprod_{e \in S(E)} \Gamma(\mbfS_e).
  \]
  Then we have
  \[
    \tilde \tau \circ \mcalD = \mcalD \circ \tilde \tau,\ \tilde \tau \circ \mbfh = \mbfh \circ \tilde \tau.
  \]
  Furthermore,
  \[
    \map{\mcalD \mbfh + \mbfh \mcalD}{\coprod_{e \in S(E)} \Gamma(\mbfS_e)}{\coprod_{e \in S(E)} \Gamma(\mbfS_e)}
  \]
  is a family of order 0 operators.
\end{proposition}

\begin{remark}\label{rem4:ord of tensor}
  In the definitions of $c$ and $\mbfh$, the $\Z/2$-grading operator $\varepsilon$ appears asymmetrically, so at first glance, these objects seem to depend on the order of taking the tensor product. However, as explained below, we can take an action of the symmetric group $\mfrakS_3$ on $\mbfS$ that is compatible with how $c$, $\mbfh$ change, and this action preserves $\tau$ and $\nabla$. From this, we see that the order of taking the tensor product can be freely interchanged as needed. This fact will be used in \cref{ssec5:real vb} and \cref{ssec5:quat vb}.

  For simplicity, we describe how to construct the $\mfrakS_2$-action when $\mbfS$ is the tensor product of two vector bundles $S_0$ and $S_1$, i.e.,
  \[
    \mbfS = S_0 \otimes S_1.
  \]
  The construction for the case of three tensor factors is similar. Initially, when tensoring in the original order, we have
  \begin{align*}
    \mbfh & = h_0 \otimes 1 + \varepsilon_0 \otimes h_1,\ c = c_0 \otimes 1 + \varepsilon_0 \otimes c_1.
  \end{align*}
  If we tensor $S_1$ first, we have
  \begin{align*}
    \mbfh^\prime & = h_0 \otimes \varepsilon_1 + 1 \otimes h_1,\ c^\prime = c_0 \otimes \varepsilon_1 + 1 \otimes c_1.
  \end{align*}
  (We identify $S_1 \otimes S_0$ with $\mbfS$ by a simple permutation of factors.) We define $\map{G}{\mbfS}{\mbfS}$ by
  \begin{itemize}
    \item $-1$ on $S_0^- \otimes S_1^-$,
    \item $+1$ on the other direct summands.
  \end{itemize}
  It is verified that $G$ takes $\mbfh$ to $\mbfh^\prime$ and $c$ to $c^\prime$. Moreover, $\tau$ and $\nabla$ are invariant under this action.
\end{remark}

\subsubsection{Construction of a triple for spin structure}\label{sssec4:const of spin}

Using the $\mcalD$ and $\mbfh$ constructed in \cref{sssec4:Fred op}, we construct a family of Fredholm operators on $\mbfS$. These operators are constructed as
\[
  \mcalD + t\mbfh
\]
for sufficiently large $t > 0$. The following proposition describes how large $t$ needs to be.

\begin{proposition}\label{prop4:t-delta pt}
  Define a compact subset $K$ of $\mcalV$ by
  \[
    K = S(E) \times S(V_\HB)/ U(1) \times B(V_\R) \times B(\R^+).
  \]
  The set $T$ of all $t > 0$ with the following property is non-empty and contractible: there exists $\bar \lambda > 0$ such that for any $s \geq t$, outside of $K$ we have
  \[
    s(\mcalD \mbfh + \mbfh \mcalD) + s^2 \mbfh^2 > \bar \lambda.
  \]
  Furthermore, when $t \in T$, the family of operators
  \begin{align}\label{eq4:Fred op}
    \mcalD + t\mbfh
  \end{align}
  from the $L^2_1$ space to the $L^2$ space, parameterized by $S(E)$, is a family of Fredholm operators.
\end{proposition}

\begin{proof}
  For the fact that $T$ is non-empty and $\mcalD + t\mbfh$ is Fredholm when $t \in T$, refer to Furuta\cite[Assumption 3.22, 3.25, Corollary 5.28]{Furuta2007Index-Theorem}. The contractibility is trivial.
\end{proof}

\begin{definition}\label{def4:det line bdl pt}
  Let $\mscrF = (\pt, E, V, W, i, D, F)$ be a model of FDA for h$K3$. Choose $t \in T$. Then, a complex line bundle
  \[
    \det (\mcalD + t\mbfh)
  \]
  is constructed over $S(E)$. Furthermore, $\tau$ induces
  \[
    \map{\tiota_t}{\det (\mcalD + t\mbfh)}{\det (\mcalD + t\mbfh)}
  \]
  which is an anti-linear map covering $\iota$.
\end{definition}

\begin{proposition}\label{thm4:det D+tiota pt}
  Let $\mscrF= (\pt, E, V, W, i, D, F)$ be a model of FDA for families of h$K3$. Then $(E, \det (\mcalD + t\mbfh), \tiota_t)$ is a triple for spin structure on $E$.
\end{proposition}

\begin{proof}
  We show that $c_1(\det (\mcalD + t\mbfh))$ is the positive generator of $H^2(S(E); \Z)$. The class $c_1(\det (\mcalD + t\mbfh))$ is equal to the first Chern class of the index bundle of $\mcalD + t\mbfh$. By the families index theorem, it is equal to the degree 2 term of
  \[
    \int_{\mcalV \to S(E)} \frac{1 - e^{2\omega}}{2\omega} \parenlr{\frac{1 - e^x}{x}}^{\dim_\C W_\HB} \parenlr{\frac{x}{e^{x/2} - e^{-x/2}}}^{\dim_\C V_\HB} (1 - e^x) \mcalF^\ast \tau_{\mcalW}.
  \]
  Here, $\omega$ is the pullback of the positive generator of $H^2(S(E); \Z)$, $x$ is the pullback of $c_1(\mcalO(1))$, and
  \[
    \tau_\mcalW \in H^{\rank \mcalW}(\mcalW, \mcalW \setminus (S(E) \times S(V_\HB)) / U(1); \Z)
  \]
  is the Thom class of $\mcalW$. Its degree 2 term is equal to
  \[
    \int_{\mcalV \to S(E)} x \cdot \mcalF^\ast \tau_{\mcalW} = c(\mscrF)
  \]
  and by the assumption stated in \cref{def4:fda model hK3}(2), this is the positive generator of $H^2(S(E); \Z)$.
  It remains to show that $\tiota^2 = -1$. Since $\tau^4 = 1$, we have $\tiota^4 = 1$. Therefore, $\tiota^2$ is either $+1$ or $-1$. When $c_1(L) = 1$, an anti-linear map over $L$ covering the antipodal map on $S(E)$ cannot square to $+1$.
\end{proof}

\begin{remark}\label{rem4:split det line}
  We revise the structure of the proof of \cref{thm4:det D+tiota pt}. Since $S_0 = \underline \C \oplus \mcalO(1)$, we have a direct sum decomposition
  \[
    \mbfS = \mbfS_\C \oplus \mbfS_{\mcalO(1)}.
  \]
  For a suitable choice of $\nabla_\HB$, $\mcalD$ and $\mbfh$ can be written as direct sums of certain operators
  \[
    \mcalD_\C,\ \mcalD_{\mcalO(1)},\ \mbfh_\C,\ \mbfh_{\mcalO(1)}
  \]
  defined on $\mbfS_\C$ and $\mbfS_{\mcalO(1)}$ respectively. We have
  \[
    \det (\mcalD + t\mbfh) \cong \det (\mcalD_\C + t\mbfh_\C) \otimes \det (\mcalD_{\mcalO(1)} + t\mbfh_{\mcalO(1)}).
  \]
  The map $\tiota$ on $\det \mcalD$ is also expressible as the tensor product of certain maps $\tiota_\C$, $\tiota_{\mcalO(1)}$ on $\det (\mcalD_\C + t\mbfh_\C)$, $\det (\mcalD_{\mcalO(1)} + t\mbfh_{\mcalO(1)})$. From \cref{thm4:det D+tiota pt}, we see that:
  \begin{enumarabicp}
    \item On $\Ker \mcalD_\C$, the self-isomorphism induced by the $-1 \in U(1)$-action on $\mscrF$ is $1$.
    \item On $\Ker \mcalD_{\mcalO(1)}$, this self-isomorphism is $-1$.
    \item The virtual rank of $\Ker \mcalD_\C$, $\Ker \mcalD_{\mcalO(1)}$ are both odd.
  \end{enumarabicp}
  The last assertion follows from the fact that if the virtual rank of $\Ker \mcalD_{\mcalO(1)}$ were even, then $\tiota^2$ would be $+1$.

  The virtual rank of $\Ker \mcalD_\C$ coincides with the Seiberg--Witten invariant of the homotopy K3 surface $X$. Thus (3) provides an alternative proof of the result of Morgan--Szab\'o\cite[Theorem 1.1]{Morgan--Szabo-homotopy-K3-1997} that the Seiberg--Witten invariant of a homotopy K3 surface is odd.
\end{remark}

Using \cref{thm4:det D+tiota pt} and \cref{prop3:spin from triple}, for a model of FDA
\[
  \mscrF = (\pt, E, V, W, i, D, F)
\]
for h$K3$, and for each choice of $\nabla_\HB \in \mcalC_\HB$ and $t \in T$, we obtain a spin structure on $E$. We denote it by
\[
  \mfrakt_{\mscrF, \nabla_\HB, t}.
\]

The following proposition shows that the spin structures are canonically identified for different choices of $t$.

\begin{proposition}\label{prop4:indep t pt}
  Let $\mscrF = (\pt, E, V, W, i, D, F)$ be a model of FDA for h$K3$. Fix $\nabla_\HB \in \mcalC_\HB$. For $s, t \in T$, there exists a canonical isomorphism
  \[
    \map{\Phi_{ts}}{\mfrakt_{\mscrF, \nabla_\HB, s}}{\mfrakt_{\mscrF, \nabla_\HB, t}}.
  \]
  Moreover, if we take another $u \in T$, then
  \[
    \Phi_{ut} \circ \Phi_{ts} = \Phi_{us}.
  \]
\end{proposition}

\begin{proof}
  The key point is that $T$ is contractible. By parametrizing the construction of \cref{def4:det line bdl pt} over $T$, a triple
  \[
    (T \times E, L, \tiota)
  \]
  for spin structure on $T \times E$ is constructed. \cref{prop3:simp-conn} shows the existence of canonical isomorphisms and their compatibility.
\end{proof}

Similarly, the following can be shown.

\begin{proposition}\label{prop4:indep nabla}
  Let $\mscrF = (\pt, E, V, W, i, D, F)$ be a model of FDA for h$K3$. Then, the spin structure $\mfrakt_{\mscrF, \nabla_\HB, t}$ does not depend on the choice of $\nabla_\HB$.
\end{proposition}

\subsection{Construction when the base space is not a point}\label{ssec4:base not pt}

In \cref{ssec4:spin from fda}, we proceeded with the discussion, assuming that the base space $U$ of the vector bundle $E$ is a single point. Even when $U$ is not a point, we can construct a spin structure on $E$ from a model
\[
  (U, E, V, W, i, D, F)
\]
of FDA for families of h$K3$. The strategy is the same as in the one-point case, which is to construct a family of Fredholm operators with a slight modification. First, we define $\mcalC_\HB^U$ as the set of all families of connections whose restriction to each fiber belongs to $\mcalC_\HB$. Second, the resulting family of operators $\mcalD + t \mbfh$ may not be Fredholm over the entire $U$, so we need the following modification.

We begin with some preparation of notation. For $b \in U$, let $K_b$ be a subset of $\mcalV_b$ given by
\[
  K_b = S(E_b) \times S(V_{\HB, b}) / U(1) \times B(V_{\R, b}) \times B(\R^+).
\]
For an open subset $U^\prime$ of $U$, we set
\[
  K_{U^\prime} = \coprod_{b \in U^\prime} K_b.
\]

Fix $\nabla_\HB \in \mcalC_\HB^U$. For each $t > 0$, we define $U_t$ as the set of all points satisfying the following uniform estimate for some $\bar \lambda > 0$: for any $s \geq t$, we have
\begin{align}\label{eq4:large t}
  s(\mcalD \mbfh + \mbfh \mcalD) + s^2\mbfh^2 > \bar \lambda
\end{align}
on $\restr{\mcalV}{U_t} \setminus K_{U_t}$.

\begin{proposition}\label{prop4:t-delta fam}
  Let $\mscrF = (U, E, V, W, i, D, F)$ be a model of FDA for h$K3$. For each $t > 0$, $U_t$ is an open set of $U$. Furthermore,
  \[
    U = \bigcup_{t > 0} U_t.
  \]
\end{proposition}

\cref{prop4:t-delta fam} follows from the same argument as Furuta\cite[Assumption 3.22, 3.25, Corollary 5.28]{Furuta2007Index-Theorem}.

With the above setup, we proceed similarly to \cref{def4:det line bdl pt} and \cref{thm4:det D+tiota pt}. First of all, a complex line bundle $\restr{\det (\mcalD + t\mbfh)}{U_t}$ is constructed over $S(\restr{E}{U_t})$. Furthermore, $\tau$ induces
\[
  \map{\tiota_t}{\restr{\det (\mcalD + t\mbfh)}{U_t}}{\restr{\det (\mcalD + t\mbfh)}{U_t}}
\]
which is an anti-linear map covering $\iota$. Then the triple
\[
  (\restr{E}{U_t}, \restr{\det (\mcalD + t\mbfh)}{U_t}, \tiota_t)
\]
is a triple for spin structure on $\restr{E}{U_t}$. From this we obtain a spin structure on $\restr{E}{U_t}$ denoted by $\mfrakt_{\mscrF, \nabla_\HB, t}$.

The same argument as in \cref{prop4:indep t pt} shows the following.

\begin{proposition}\label{thm4:indep t fam}
  Let $U$ be a topological space that is locally simply-connected and homeomorphic to an open set of some paracompact Hausdorff space. Let
  \[
    \mscrF = (U, E, V, W, i, D, F)
  \]
  be a model of FDA for h$K3$. Fix $\nabla_\HB \in \mcalC_\HB^U$. Let $s < t$ be positive real numbers. Then, there exists a canonical isomorphism
  \[
    \map{\Phi_{ts}}{\mfrakt_{\mscrF, \nabla_\HB, s}}{\restr{\mfrakt_{\mscrF, \nabla_\HB, t}}{U_s}}.
  \]
  Moreover, if we take another $u > t$, we have $\Phi_{ut} \circ \Phi_{ts} = \Phi_{us}$.
\end{proposition}

By \cref{thm4:indep t fam}, the spin structures $\mfrakt_{\mscrF, \nabla_\HB, t}$ on $\restr{E}{U_t}$ can be glued together to define a spin structure on $E$. This spin structure depends on the various choices of the auxiliary data. The following lemma shows that it actually depends on none of them.

\begin{lemma}\label{lem4:indep rho}
  Let $U$ be a topological space that is locally simply-connected and homeomorphic to an open set of some paracompact Hausdorff space. Let
  \[
    \mscrF = (U, E, V, W, i, D, F)
  \]
  be a model of FDA for h$K3$. Then the spin structure on $E$ constructed from \cref{thm4:indep t fam} is independent of the choices of $\nabla_\HB$, $\rho$ in \cref{eq4:rho}, and the $\Z/2$-invariant Riemannian metric on $S(V_\HB)/U(1)$.
\end{lemma}

\begin{proof}
  This is an application of \cref{prop3:simp-conn}. The key point is that the space of all data specified above is contractible.
\end{proof}

With these preparations, we can now prove \cref{thm4:spin from fda}.

\begin{proof}[Proof of \cref{thm4:spin from fda}]
  So far, we have constructed a spin structure $\mfrakt_\mscrF$ on the bundle $E$, which is the statement of (1).

  We prove the correspondence of self-isomorphisms in (2). A self-isomorphism of $\mscrF$ induces self-diffeomorphisms $\tilde f$, $\tilde g$ of $\mcalV$ and $\mcalW$ respectively, satisfying the conditions
  \[
    \mcalF \circ \tf = \tg,\ D \circ \tf = D,\ \mbfh \circ \tf = \mbfh.
  \]
  Therefore, $\tilde f$ and $\tilde g$ induce self-isomorphisms of $\mfrakt_\mscrF$.

  To prove the correspondence of self-isomorphisms, it suffices to consider the case when $U$ is a point. Using the notation of \cref{def4:det line bdl pt}, it is clear that $+1$ corresponds to $+1$. We prove that $-1$ corresponds to $-1$. Since $j \in Pin(2)$ satisfies
  \[
    j^2 = -1,
  \]
  the self-isomorphism of $\det (\mcalD + t\mbfh)$ induced by the self-isomorphism $-1$ of $\mscrF$ is equal to the square of the self-isomorphism $\tiota_t$ of $L$ induced by $j$, which is
  \[
    \map{\tiota_t^2 = -1}{\det (\mcalD + t\mbfh)}{\det (\mcalD + t\mbfh)}.
  \]
  This is equal to $-1$ as a self-isomorphism of the spin structure $\mfrakt_\mscrF$.
\end{proof}

In \cref{def4:aux data} and \cref{def4:proj}, we discussed the set $\tilde \mcalA$ of auxiliary data needed for the construction of finite-dimensional approximations of the family of the Seiberg--Witten maps, as well as the set
\[
  \mcalP(R, U, \varepsilon, \lambda)
\]
of projection-like maps determined by specifying an element $(R, U, \varepsilon, \lambda) \in \tilde \mcalA$. By fixing $(R, U, \varepsilon, \lambda) \in \tilde \mcalA$ and $p \in \mcalP(R, U, \varepsilon, \lambda)$, we have obtained a model of FDA for families of h$K3$ via the method of \cref{eg4:fda from sw}. As the final task of this section, we prove that the spin structure on $\hplus{\restr{\mbbX}{U}}$ constructed from these data is independent of the choice of the projection-like map $p \in \mcalP(R, U, \varepsilon, \lambda)$. (The independence of the choice of $(R, U, \varepsilon, \lambda)$ is given by \cref{thm5:can iso}.)

Let us recall the setup. Let $X$ be a homotopy K3 surface, and let $X \to \mbbX \to B$ be the family of $X$ associated with the principal $\Diffplus$-bundle $\mcalE \to B$. We assume that the base space $B$ is a topological space that is locally simply-connected and homeomorphic to an open set of some paracompact Hausdorff space. We assume that $\mbbX$ is endowed with a continuous family of smooth Riemannian metrics on the fibers. We also assume that a lift $\tmcalE$ of $\mcalE$ to a principal $\Diffspin$-bundle is given.

\begin{proposition}\label{thm4:indep proj}
  Fix $(R, U, \varepsilon, \lambda) \in \tilde \mcalA$ (see \cref{def4:proj}). Choose $p \in \mcalP(R, U, \varepsilon, \lambda)$, and denote by $\mscrF_p$ the model of FDA for families of h$K3$ constructed from this data. Then for different choices of $p$, the spin structures $\mfrakt_{\mscrF_p}$ on $\hplus{\restr{\mbbX}{U}}$ constructed from them are canonically isomorphic. Moreover, this isomorphism is compatible for any three choices of $p$.
\end{proposition}

\begin{proof}
  This is essentially an application of \cref{prop3:simp-conn}. The only thing we need to care about is that a triple for spin structure may not be globally defined on $U$. We first construct an isomorphism between $\mfrakt_{\mscrF_p}$'s on small open sets by using \cref{prop3:simp-conn}, and we glue them together to obtain a global isomorphism.
\end{proof}

\section{Relationship between stabilization of FDA and induced spin structure}\label{sec5:swap fda}

Recall the setting of \cref{ssec4:sw fda}. That is, let $B$ be a topological space that is locally simply-connected and homeomorphic to an open set of some paracompact Hausdorff space, $\mcalE \to B$ be a principal $\Diffplus$-bundle, and $X \to \mbbX \to B$ be a family of $X$ associated with $\mcalE$. Assume that $\mbbX$ is equipped with a continuous family of smooth Riemannian metrics on each fiber. Assume that a lift $\tmcalE$ of the principal $\Diffplus$-bundle $\mcalE$ to a principal $\Diffspin$-bundle is given. Let $(R_1, U_1, \varepsilon_1, \lambda_1),\ (R_2, U_2, \varepsilon_2, \lambda_2) \in \tilde \mcalA$. (See \cref{def4:proj}.) From \cref{thm4:indep proj}, spin structures
\[
  \mfrakt_1,\ \mfrakt_2
\]
on $\hplus{\restr{\mbbX}{U_1}}$, $\hplus{\restr{\mbbX}{U_2}}$ are constructed. The goal is to prove the following:

\begin{theorem}\label{thm5:can iso}
  Let $(R_1, U_1, \varepsilon_1, \lambda_1),\ (R_2, U_2, \varepsilon_2, \lambda_2) \in \tilde \mcalA$. For each of these pairs, a canonical isomorphism of spin structures
  \[
    \map{\Phi_{21}}{\restr{\mfrakt_1}{U_1 \cap U_2}}{\restr{\mfrakt_2}{U_1 \cap U_2}}
  \]
  can be constructed, satisfying the following property: when $(R_3, U_3, \varepsilon_3, \lambda_3)$ is a third element of $\tilde \mcalA$, over $U_1 \cap U_2 \cap U_3$ we have
  \[
    \Phi_{32} \circ \Phi_{21} = \Phi_{31}.
  \]
\end{theorem}

\cref{thm5:can iso} will be proved in the following steps. Let $U$ be a topological space that is locally simply-connected and homeomorphic to an open set of some paracompact Hausdorff space. Let $E \to U$ be a rank 3 vector bundle over $U$ with a given orientation and metric.
\begin{itemize}
  \item In general, when a model $\mscrF$ of FDA for families of h$K3$ is given, we prove the following claim: when $V_\R^{\prime}$ is a real vector bundle over $U$, by taking the direct sum of $\id_{V_\R^{\prime}}$ with $\mscrF$, a new model $\mscrF^{\prime}$ of FDA for families of h$K3$ can be obtained. We prove that there is a canonical isomorphism between the spin structures $\mfrakt_\mscrF$ and $\mfrakt_{\mscrF^{\prime}}$ constructed from each.
  \item We show that there is a similar canonical isomorphism when taking the direct sum with a quaternionic vector bundle $V_\HB^{\prime}$ over $U$.
  \item From the construction, when taking the direct sum of both $V_\R^{\prime}$ and $V_\HB^{\prime}$, the order of the direct sum does not affect the resulting isomorphism of spin structures.
  \item We apply the above statements to the models of FDA for families of h$K3$ constructed from $(R_1, U_1, \varepsilon_1, \lambda_1)$ and $(R_2, U_2, \varepsilon_2, \lambda_2)$.
\end{itemize}

\subsection{The case of taking the direct sum with a real vector bundle}\label{ssec5:real vb}

Let $U$ be a topological space that is locally simply-connected and homeomorphic to an open set of some paracompact Hausdorff space. Let $E \to U$ be a rank 3 vector bundle over $U$ with an orientation and a metric. In this section, we fix a model $\mcalF = (U, E, V, W, D, i, F)$ of FDA for families of h$K3$ and consider the case of taking the direct sum with a $Pin(2)$-equivariant real vector bundle $V_\R^{\prime}$ with a metric. The $Pin(2)$-action is introduced as follows: $U(1)$ acts trivially, and $j$ acts by multiplication by $-1$.

Let
\[
  D^{\prime} = D \oplus \id_{V_\R^{\prime}}.
\]
Let
\[
  \map{F^{\prime}}{S(E) \times_U B(V_\HB) \times_U B(V_\R \oplus V_\R^{\prime})}{S(E) \times_U (W \oplus V_\R^{\prime})}
\]
be the map obtained by juxtaposing the identity on $V_\R^{\prime}$ and $F$. Then,
\[
  \mscrF^{\prime} = (E, V \oplus V_\R^{\prime}, W \oplus V_\R^{\prime}, D^{\prime}, i, F^{\prime})
\]
is also a model of FDA for families of h$K3$. The goal of this subsection is to prove the following theorem:

\begin{theorem}\label{thm5:stab for R}
  Let $U$ be a topological space that is locally simply-connected and homeomorphic to an open set of some paracompact Hausdorff space. Let $E \to U$ be a rank 3 vector bundle over $U$ with an orientation and a metric. Let $\mscrF = (U, E, V, W, D, i, F)$ be a model of FDA for families of h$K3$, and let $V_\R^{\prime}$ be a $Pin(2)$-equivariant real vector bundle with a metric. Let $\mscrF^{\prime}$ be the model of FDA for families of h$K3$ obtained by taking the direct sum of $V_\R^{\prime}$ with $\mscrF$. Let $\mfrakt_\mscrF$, $\mfrakt_{\mscrF^\prime}$ be spin structures on $\restr{E}{U}$ constructed from $\mscrF$ and $\mscrF^\prime$. Then a canonical isomorphism
  \[
    \map{\Phi_{\mscrF^{\prime} \mscrF}}{\mfrakt_{\mscrF}}{\mfrakt_{\mscrF^{\prime}}}
  \]
  can be constructed. This isomorphism satisfies
  \[
    \Phi_{\mscrF^{\pprime}\mscrF^{\prime}} \circ \Phi_{\mscrF^{\prime}\mscrF} = \Phi_{\mscrF^{\pprime}\mscrF}
  \]
  for $\mscrF^{\pprime}$ obtained by taking the direct sum of a $Pin(2)$-equivariant real vector bundle $V_\R^{\pprime}$ with a metric to $\mscrF^{\prime}$.
\end{theorem}

The key to the proof is that by taking the direct sum with $V_\R^{\prime}$, the difference of families of differential operators described in \cref{sssec4:const of spin} is described by something similar to a super-symmetric harmonic oscillator.

Here, we explain the proof of \cref{thm5:stab for R} limited to the case where the base space of the vector bundle $E$ is a point. The general case can be modified according to the changes explained in \cref{ssec4:base not pt}.

Define the Clifford bundle
\[
  (S_\R^{\prime}, c_\R^{\prime}, \nabla_\R^{\prime})
\]
on $V_\R^{\prime}$ as follows. First, let
\[
  S_\R^{\prime} = V_\R^{\prime} \times (\Lambda_\R^{\ast} V_\R^{\prime} \otimes \C),
\]
and for $v \in V_\R^{\prime}$, $\tilde v \in T_v V_\R^{\prime} \cong V_\R^{\prime}$, let
\[
  c_{V_\R^{\prime}}(\tilde v) = \tilde v^{\wedge} - \tilde v^{\lrcorner}.
\]
Also, let $\nabla_\R^{\prime}$ be the Levi--Civita connection. $\nabla_\R^{\prime}$ is equal to the trivial connection. Through the isomorphism
\[
  V_\R^{\prime} \times \Lambda_\R^{\ast} V_\R^\prime \otimes_\R \C \cong \Omega^{\ast}(V_\R^{\prime}; \C),
\]
the Dirac type operator $\mcalD_\R^{\prime}$ defined by this Clifford bundle coincides with $d + d^{\ast}$. The family of Hermitian maps
\[
  h_{V_\R^{\prime}} \in \Gamma(\Herm(S_\R^{\prime}))
\]
is defined by the following equation: for $v \in V_\R^{\prime}$,
\[
  \bar h_{V_\R^{\prime}}(v) = v^{\wedge} + v^{\lrcorner}.
\]

Moreover, define $\tau_{V_\R^{\prime}}$ by the following equation: for $v \in V_\R^{\prime}$, $\omega \in \Lambda_\R^{\ast} V_\R^{\prime}$, $z \in \C$,
\[
  \tau_{V_\R^{\prime}}(v, \omega \otimes z) = (-v, (-1)^{\deg \omega} \omega \otimes \bar z).
\]

The operators constructed from $\mscrF^{\prime}$ are close to the direct sum of the operators constructed from $\mscrF$ with the above data. However, due to the effect of $\rho$ taken in \cref{eq4:rho}, it is necessary to consider $h_{V_\R^{\prime}}$ defined by the following equation instead of $\bar h_{V_\R^{\prime}}$:
\[
  h_{V_\R^{\prime}}(v) = \bar h_{V_\R^{\prime}}\parenlr{\rho(\abs{v})\frac{v}{\abs{v}}}.
\]
$h_{V_\R^{\prime}}$ is equal to $\bar h_{V_\R^{\prime}}$ near the origin and is constant in the radial direction when $\abs{v} \geq 1$.

\begin{proposition}\label{prop5:ker pssho}
  When $t_0 > 0$ is sufficiently large, there is an isomorphism of vector bundles that commutes with $\tau_{V_\R^{\prime}}$ and preserves the $\Z/2$-grading
  \[
    \coprod_{t \geq t_0} \Ker (\mcalD_{V_\R^{\prime}} + t \bar h_{V_\R^{\prime}}) \cong \coprod_{t \geq t_0} \Ker (\mcalD_{V_\R^{\prime}} + t h_{V_\R^{\prime}}).
  \]
  This isomorphism is unique up to homotopy through degree-preserving isomorphisms that commute with $\tau_{V_\R^{\prime}}$.
\end{proposition}

The proof of \cref{prop5:ker pssho} can be done using the technique of the Witten deformation. For the Witten deformation, the readers are referred to Zhang\cite{Zhang-Witten-deformation-2001}, Furuta\cite{Furuta2007Index-Theorem} and Miyazawa\cite{miyazawa2021localization}.

\begin{remark}\label{rem5:triv pssho}
  As a $\tau_{V_\R^\prime}$-invariant element in the kernel of the super-symmetric harmonic oscillator $(\mcalD_{V_\R^\prime} + t \bar h_{V_\R^\prime})$,
  \[
    e^{-\frac{1}{2}t\abs{v}^2}
  \]
  can be taken. This gives a trivialization of the orientation of the vector bundle on the left-hand side of \cref{prop5:ker pssho}. Therefore, for $t \geq t_0$, an isomorphism
  \[
    \Ker (\mcalD_{V_\R^\prime} + t h_{V_\R^\prime}) \cong \C
  \]
  can be uniquely determined up to positive real scalar multiplication, such that $\tau_{V_\R^\prime}$ corresponds to the complex conjugation on $\C$. Moreover, this isomorphism is compatible with the isomorphism
  \[
    \Ker (\mcalD_{V_\R^\prime \oplus V_\R^\pprime} + t h_{V_\R^\prime\oplus V_\R^\pprime}) \cong \Ker (\mcalD_{V_\R^\prime} + t h_{V_\R^\prime}) \otimes \Ker (\mcalD_{V_\R^\pprime} + t h_{V_\R^\pprime})
  \]
  determined by another $Pin(2)$-equivariant real vector space $V_\R^\pprime$ with a metric.
\end{remark}

\begin{proposition}\label{prop5:pssho from +R}
  Let $(\mbfS, c, \nabla)$ be the vector bundle constructed from $\mscrF$ in \cref{ssec4:spin from fda}, $h$ be the family of Hermitian maps on it, $c$ be the Clifford action, and $\tau$ be the fiberwise anti-linear map covering the antipodal map $\iota$ on $S(E)$. Similarly, let $(\mbfS^\prime, c^\prime, \nabla^\prime)$, $h^\prime$, $\tau^\prime$ be the data constructed from $\mscrF^\prime$. Then, there is a canonical isomorphism
  \[
    (\mbfS^\prime, c^\prime, \nabla^\prime) \cong (\mbfS, c, \nabla) \boxtimes (\mbfS_\R^\prime, c_\R^\prime, \nabla_\R^\prime).
  \]
  Moreover, a homotopy between
  \[
    h^\prime,\ h \otimes 1 + \varepsilon \otimes h_{V_\R^\prime}
  \]
  that commutes with $\tau^\prime$ can be specified. This homotopy satisfies the following condition: when $V_\R^\pprime$ is further added as a direct sum, there are two different homotopies between
  \[
    h^\pprime,\ h \otimes 1 \otimes 1 + \varepsilon \otimes h_{V_\R^\prime} \otimes 1 + \varepsilon \otimes \varepsilon \otimes h_{V_\R^\pprime},
  \]
  depending on whether $V_\R^\prime \oplus V_\R^\pprime$ is added as a direct sum at once or one by one. These two homotopies themselves are homotopic.
\end{proposition}

\begin{proof}
  From what was mentioned in \cref{rem4:ord of tensor}, the order of the tensor product can be chosen freely. The first isomorphism is obtained by considering the exterior algebra of $\R^+ \times V_\R \times V_\R^\prime$ being tensored at the end.

  Let $\bm{v} = (v, v^\prime) \in V \times V_\R^\prime$. Using $\rho$ taken in \cref{eq4:rho},
  \begin{align*}
    h^\prime(\bm{v})                                            & = (h \otimes 1)\parenlr{\rho(\abs{\bm{v}}) \frac{v}{\abs{\bm{v}}}} + (\varepsilon \otimes \bar h_{V_\R^\prime})\parenlr{\rho(\bm{v}) \frac{v^\prime}{\abs{\bm{v}}}}, \\
    (h \otimes 1 + \varepsilon \otimes h_{V_\R^\prime})(\bm{v}) & = (h \otimes 1)\parenlr{\rho(\abs{v}) \frac{v}{\abs{v}}} + (\varepsilon \otimes \bar h_{V_\R^\prime})\parenlr{\rho(\abs{v^\prime})\frac{v^\prime}{\abs{v^\prime}}}
  \end{align*}
  can be written.
  Let
  \[
    a_s(\bm{v}) = (1 - s) \bm{v} + s v,\ b_s(v) = (1 - s) \bm{v} + s v^\prime.
  \]
  The homotopy can be defined by
  \[
    H(s, \bm{v}) = (h \otimes 1)\parenlr{\rho(a_s(\bm{v})) \frac{v}{a_s(\bm{v})}} + (\varepsilon \otimes \bar h_{V_\R^\prime})\parenlr{\rho(b_s(\bm{v}))\frac{v^\prime}{b_s(\bm{v})}}.
  \]
  The last claim can be proved by explicitly writing down the two homotopies.
\end{proof}

\begin{proof}[Proof of \cref{thm5:stab for R}]
  It suffices to construct a canonical isomorphism up to homotopy between the triples for spin structure on $E$. From \cref{prop5:pssho from +R}, it suffices to specify an identification between the triple for spin structure on $E$ made from the one obtained by replacing $h^\prime$ after the direct sum with $h \otimes 1 + \varepsilon \otimes h_{V_\R^\prime}$ and the triple for spin structure on $E$ made from the one before the direct sum. The Dirac type operator $\mcalD^\prime$ constructed from $\mscrF^\prime$ in \cref{def4:Dirac type op} is the tensor product of the Dirac type operator $\mcalD$ made from $\mscrF$ and the Dirac type operator $\mcalD_{V_\R^\prime}$ on $V_\R^\prime$. Therefore, the kernel of
  \[
    \mcalD^\prime + t (h \otimes 1 + \varepsilon \otimes h_{V_\R^\prime})
  \]
  is isomorphic to the tensor product of the kernels of
  \[
    \mcalD+ t h,\ \mcalD_{V_\R^\prime} + t h_{V_\R^\prime},
  \]
  respectively. When $t > 0$ is taken sufficiently large, the kernel of the deformed super-symmetric harmonic oscillator is given a trivialization by the method described in \cref{rem5:triv pssho}. From this, an identification between the triples for spin structure on $E$ is given.

  The compatibility when another $V_\R^\pprime$ is added as a direct sum follows from the compatibility of the trivialization of the kernel of the deformed super-symmetric harmonic oscillator explained in \cref{rem5:triv pssho} and the existence of a homotopy between the homotopies stated in \cref{prop5:pssho from +R}.
\end{proof}

\subsection{The case of taking the direct sum with a quaternionic vector bundle}\label{ssec5:quat vb}

Let $U$ be a topological space that is locally simply-connected and homeomorphic to an open set of some paracompact Hausdorff space. Let $E \to U$ be a rank 3 vector bundle over $U$ with an orientation and a metric. Let $\mscrF = (E, V, W, D, i, F)$ be a model of FDA for families of h$K3$. Consider the case of taking the direct sum with a $Pin(2)$-equivariant quaternionic vector bundle $V_\HB^\prime$ with a metric.
Let
\[
  D^\prime = D \oplus \id_{V_\HB^\prime}.
\]
Let
\[
  \map{F^\prime}{S(E) \times_U B(V_\HB \oplus V_\HB^\prime) \times_U B(V_\R)}{S(E) \times_U (W \oplus V_\HB^\prime)}
\]
be the map obtained by juxtaposing $F$ and the identity on $V_\HB^\prime$. Then,
\[
  \mscrF^\prime = (E, V \oplus V_\HB^\prime, W \oplus V_\HB^\prime, D^\prime, i, F^\prime)
\]
is also a model of FDA for families of h$K3$. In this subsection, we prove the following theorem.

\begin{theorem}\label{thm5:stab for H}
  Let $U$ be a topological space that is locally simply-connected and homeomorphic to an open set of some paracompact Hausdorff space. Let $E \to U$ be a rank 3 vector bundle over $U$ with an orientation and a metric. Let $\mscrF = (E, V, W, D, i, F)$ be a model of FDA for families of h$K3$, and let $V_\HB^\prime$ be a $Pin(2)$-equivariant quaternionic vector bundle with a $Pin(2)$-equivariant metric. Then, there is a canonical isomorphism
  \[
    \map{\Phi_{\mscrF^\prime \mscrF}}{\mfrakt_\mscrF}{\mfrakt_{\mscrF^\prime}}
  \]
  between the spin structures $\mfrakt_\mscrF$, $\mfrakt_{\mscrF^\prime}$ of $E$ constructed from $\mscrF$, $\mscrF^\prime$. This isomorphism satisfies
  \[
    \Phi_{\mscrF^\pprime\mscrF^\prime} \circ \Phi_{\mscrF^\prime\mscrF} = \Phi_{\mscrF^\pprime\mscrF}
  \]
  for $\mscrF^\pprime$ obtained by taking the direct sum of a vector bundle $V_\HB^\pprime$ with $\mscrF^\prime$.
\end{theorem}

When taking the direct sum with a quaternionic vector bundle $V_\HB^\prime$, the argument becomes more complicated compared to the case of a real vector bundle in the previous section. Both \cref{thm5:stab for R} and \cref{thm5:stab for H} essentially correspond to performing the pushforward in $K$-theory at the level of representatives. In the proof of \cref{thm5:stab for R}, it was enough to discuss the Thom isomorphism for trivial bundles, but in the proof of \cref{thm5:stab for H}, it is necessary to carry out an argument corresponding to the Thom isomorphism and the excision theorem for non-trivial vector bundles. In other words, the approach in this section is as follows:

\begin{enumarabicp}
  \item Families of Fredholm operators parametrized by $S(E)$ are constructed from $\mscrF$, $\mscrF^\prime$ by the method explained in \cref{ssec4:spin from fda}. The vector bundles on which these operators are defined are denoted by $\mbfS$, $\mbfS^\prime$, and their base spaces are denoted by $\mcalV$, $\mcalV^\prime$, respectively. There is a natural embedding from $\mcalV$ to $\mcalV^\prime$.
  \item Using the normal bundle $\mscrV$ of this embedding, another family of Fredholm operators is constructed. The spin structure on $E$ constructed from it is denoted by $\mfrakt_\mscrV$.
  \item A canonical isomorphism between $\mfrakt_\mscrF$ and $\mfrakt_\mscrV$ will be constructed.
  \item The normal bundle $\mscrV$ is identified with the tubular neighborhood of the embedding. A canonical isomorphism between $\mfrakt_\mscrV$ and $\mfrakt_{\mscrF^\prime}$ will be constructed.
\end{enumarabicp}
The statement (3) corresponds to the Thom isomorphism, and (4) corresponds to the excision theorem.

We will construct a Clifford bundle on a normal bundle $\mscrV$. This is constructed only after specifying some auxiliary data, which is described in detail below. In the actual argument, two Clifford bundles are constructed on $\mscrV$. These can be identified after further specifying the auxiliary data. One of them is convenient for proving (3), and the other for proving (4).

We explain the proof of \cref{thm5:stab for H} limited to the case where the base space of the vector bundle $E$ is a point. The general case can be modified according to the changes explained in \cref{ssec4:base not pt}.

First, we organize the notation. Let
\begin{align*}
  \mcalV        & = S(E) \times S(V_\HB)/U(1) \times \R^+ \times V_\R,                     \\
  \mcalV^\prime & = S(E) \times S(V_\HB \oplus V_\HB^\prime)/U(1) \times \R^+ \times V_\R.
\end{align*}
What we truly want to consider is the natural embedding
\[
  \mcalV \to \mcalV^\prime
\]
between these, but in many parts of the following discussion, it becomes essential to consider the part involving the quaternionic vector spaces
\[
  \mbbP = S(V_\HB)/U(1),\ \mbbP^\prime = S(V_\HB \oplus V_\HB^\prime)/U(1),\ \map{f}{\mbbP}{\mbbP^\prime}.
\]
Let
\[
  \mscrV_\HB = (S(V_\HB) \times V_\HB^\prime)/U(1).
\]
Note that $\mscrV_\HB$ is the normal bundle of $f$, and
\[
  \mbbP,\ \mbbP^\prime,\ \mscrV_\HB
\]
are equipped with $\Z/2$-actions originating from $j \in Pin(2)$.
Let $B_r(\mscrV_\HB)$ denote the disk bundle of $\mscrV_\HB$ with radius $r$. Define
\begin{align}\label{eq5:embedding}
  \map{\phi}{B_{1/2}(\mscrV_\HB)}{\mbbP^\prime}
\end{align}
by
\[
  \phi([v_\HB, v_\HB^\prime]) = \bracketlr{\frac{v_\HB}{\sqrt{v_\HB^2 + v_\HB^{^\prime 2}}}, \frac{v_\HB^\prime}{\sqrt{v_\HB^2 + v_\HB^{^\prime 2}}}}.
\]
Note that $\phi$ is a holomorphic embedding. In the following, $B_{1/2}(\mscrV_\HB)$ and $\phi(B_{1/2}(\mscrV_\HB))$ are identified through $\phi$.
Let
\[
  \map{\pi}{\mscrV_\HB}{\mbbP}
\]
be the natural projection. Also, we denote all of the vector bundles obtained by dividing
\[
  S(V_\HB) \times W_\HB,\ S(V_\HB \oplus V_\HB^\prime) \times W_\HB,\ S(V_\HB) \times V_\HB^\prime \times W_\HB
\] by the diagonal $U(1)$-action by the same symbol $\mcalW_\HB$.

Let $\mcalV_\HB^\prime$ denote the vector bundles obtained by replacing $W_\HB$ with $V_\HB^\prime$. We set
\begin{align}\label{eq5:spin bdl on H comp}
  \begin{split}
    S_\mbbP               & = (\underline{\C} \oplus \mcalO(1)) \otimes \Lambda_\C^\ast T\mbbP \otimes \Lambda_\C^\ast \mcalW_\HB,                                                                    \\
    S_{\mbbP^\prime}      & = (\underline{\C} \oplus \mcalO(1)) \otimes \Lambda_\C^\ast T\mbbP^\prime \otimes \Lambda_\C^\ast \mcalW_\HB \otimes \Lambda_\C^\ast \mcalV_\HB^\prime,                   \\
    S_{\mscrV_\HB}        & = (\underline{\C} \oplus \mcalO(1)) \otimes \Lambda_\C^\ast T\mscrV_\HB \otimes \Lambda_\C^\ast \mcalW_\HB \otimes \Lambda_\C^\ast \mcalV_\HB^\prime,                     \\
    S_{\mscrV_\HB}^\prime & = (\underline{\C} \oplus \mcalO(1)) \otimes \pi^\ast \Lambda_\C^\ast T\mbbP \otimes \Lambda_\C^\ast \mcalW_\HB \otimes (\Lambda_\R^\ast \mcalV_\HB^\prime \otimes_\R \C).
  \end{split}
\end{align}
These vector bundles have $\Z/4$-actions
\[
  \tau_\mbbP,\ \tau_{\mbbP^\prime},\ \tau_{\mscrV_\HB},\ \tau_{\mscrV_\HB}^\prime
\]
originating from $j \in Pin(2)$.

Note that $S_{\mscrV_\HB}$ and $S_{\mscrV_\HB}^\prime$ are both vector bundles over $\mscrV_\HB$. These can be almost identified in the presence of each choice of the auxiliary data mentioned below. The representation $S_{\mscrV_\HB}$ is convenient for carrying out the argument corresponding to the excision theorem, and $S_{\mscrV_\HB}^\prime$ for the Thom isomorphism.

The construction of the canonical isomorphism between $\mfrakt_\mscrF$ and $\mfrakt_{\mscrF^\prime}$ is reduced to the following propositions.

\begin{proposition}\label{prop5:spin mscrV}
  By appropriately defining Clifford bundle structures on each of $S_{\mscrV_\HB}$ and $S_{\mscrV_\HB^\prime}$, spin structures $\mfrakt_\mscrV$ and $\mfrakt_\mscrV^\prime$ on $E$ is constructed canonically.
\end{proposition}

\begin{proposition}\label{prop5:can iso for vari spin}
  The following isomorphisms are constructed canonically:
  \begin{enumarabicp}
    \item $\mfrakt_\mscrF \cong \mfrakt_\mscrV^\prime$.
    \item $\mfrakt_\mscrV^\prime \cong \mfrakt_\mscrV$.
    \item $\mfrakt_\mscrV \cong \mfrakt_{\mscrF^\prime}$.
  \end{enumarabicp}
\end{proposition}

Regarding \cref{prop5:can iso for vari spin}, (1) is proved by the same argument as the proof of the Thom isomorphism. The statement (2) is proved by ``almost identifying $S_{\mscrV_\HB}$ and $S_{\mscrV_\HB}^\prime$'', and (3) is proved by the same argument as the proof of the excision theorem.

Before considering the auxiliary data, let us confirm the following points.
\begin{itemize}
  \item Once a Riemannian metric on $\mbbP$ is determined, the metrics on $S_\mbbP$ and $S_{\mscrV_\HB}^\prime$ as complex vector bundles are determined. Once a Riemannian metric on $\mbbP^\prime$ is determined, the metric on $S_{\mbbP^\prime}$ as a complex vector bundle is determined. Once a Riemannian metric on $\mscrV_\HB$ is determined, the metric on $S_{\mscrV_\HB}$ as a complex vector bundle is determined. In each case, if the Riemannian metric is $\Z/2$-invariant, the real part of the complex vector bundle metric is $\Z/4$-invariant.
  \item For each choice of Riemannian metrics on $\mbbP$, $\mbbP^\prime$, and $\mscrV_\HB$, the Clifford actions on $S_\mbbP$, $S_\mbbP^\prime$, and $S_{\mscrV_\HB}$ are determined. To define the Clifford action on $S_{\mscrV_\HB}^\prime$, in addition to the Riemannian metric on $\mscrV_\HB$, it is necessary to take a splitting of the exact sequence
        \[
          0 \to T_\mbbP \mscrV_\HB \to T\mscrV_\HB \to \pi^\ast T\mbbP \to 0.
        \]
  \item The pullback of $S_{\mbbP^\prime}$ by the embedding $B_{1/2}(\mscrV_\HB) \to \mbbP^\prime$ is naturally isomorphic to $S_{\mscrV_\HB}$. If the Riemannian metrics on $\mscrV_\HB$ and $\mbbP^\prime$ are such that $B_{1/4}(\mscrV_\HB) \to \mbbP^\prime$ is an isometry, then this isomorphism of complex vector bundles also preserves the metrics on $B_{1/4}(\mscrV_\HB)$.
  \item $\mscrV_\HB$ has a metric as a vector bundle over $\mbbP$, but it does not have a canonical Riemannian metric (even if a Riemannian metric on $\mbbP$ is specified).
\end{itemize}
The choices of Riemannian metrics on $\mbbP$, $\mbbP^\prime$, and $\mscrV_\HB$ will be specified as auxiliary data below. Suppose that Riemannian metrics on these manifolds are specified. The metrics on each complex vector bundle in \cref{eq5:spin bdl on H comp} are considered to be those induced from the Riemannian metrics as described here. When the choices of Riemannian metrics vary, the metrics on these vector bundles are also considered to vary.

Keeping the above in mind, we start to describe the auxiliary data. The first auxiliary data is a $\Z/2$-invariant Riemannian metric on $\mbbP$. In the following discussion, we fix one such metric.

The second auxiliary data is a splitting of the $\Z/2$-equivariant vector bundle exact sequence
\begin{align}\label{eq5:ex seq TmscrV}
  0 \to T_\mbbP \mscrV_\HB \to T\mscrV_\HB \to \pi^\ast T\mbbP \to 0,
\end{align}
where $T_\mbbP \mscrV_\HB$ denotes the vertical tangent bundle of $\mscrV_\HB$. For technical reasons related to the construction of Fredholm operators later, we view
\[
  \mscrV_\HB = B(\mscrV_\HB) \cup (S(\mscrV_\HB) \times [1, \infty))
\]
and only consider splittings with cylindrical ends.

\begin{definition}
  Let $\mscrS$ denote the space of all $\Z/2$-equivariant splittings of the exact sequence \cref{eq5:ex seq TmscrV} that are translation-invariant over $S(\mscrV_\HB) \times [1, \infty)$.
\end{definition}

By definition, $\mscrS$ is contractible.

\begin{lemma}\label{lem5:str from splitting}
  Fix an element $s \in \mscrS$.
  \begin{enumarabicp}
    \item In the presence of a specified $\Z/2$-invariant metric on $T_\mbbP \mscrV_\HB$, a $\Z/2$-invariant Riemannian metric $g_s^{\mscrV_\HB}$ on $\mscrV_\HB$ is determined from $s$.
    \item Using $g_s^{\mscrV_\HB}$, a Clifford action $c_\mscrV$ of $T\mscrV_\HB$ on $S_{\mscrV_\HB}$ is defined. Also, using the splitting $s$, a Clifford action $c_\mscrV^\prime$ of $T\mscrV_\HB$ on $S_{\mscrV_\HB}^\prime$ is defined.
  \end{enumarabicp}
\end{lemma}

\begin{proof}
  We prove (1). The splitting $s$ gives an isomorphism
  \[
    T\mscrV_\HB \cong T_\mbbP \mscrV_\HB \oplus \pi^\ast T\mbbP.
  \]
  By using the metrics on the right side, the metric on $T \mscrV_\HB$ is determined.

  We prove (2). Using the Clifford action defined on $\Lambda_\C^\ast T\mscrV_\HB$, we can define a Clifford action on $S_{\mscrV_\HB}$. We confirm that a Clifford action on $S_{\mscrV_\HB}^\prime$ is determined. Using $s \in \mscrS$, an element of $T\mscrV_\HB$ can be decomposed into the sum of an element of $\pi^\ast T\mbbP$ and an element of $T_\mbbP \mscrV_\HB$. The former acts on $\pi^\ast \Lambda_\C^\ast T\mbbP$ and the latter acts on $\Lambda_\R^\ast \mcalV_\HB^\prime \otimes_\R \C$.
\end{proof}

We specify how to take a metric on $T_\mbbP \mscrV_\HB$. In \cref{sec8:deform ssho}, we explain the construction of a pseudo-super-symmetric harmonic oscillator on $V_\HB^\prime$ with cylindrical end. This Riemannian metric coincides with the standard Riemannian metric on $B_{1/2}(V_\HB^\prime)$ and is defined to be isometric to
\[
  S(V_\HB^\prime) \times [1, \infty)
\]
for $r > 1$. Using the Riemannian metric used there, we can give a metric on $T_\mbbP \mscrV_\HB$.

A volume form on $\mscrV_\HB$ is determined from a Riemannian metric on $\mscrV_\HB$. It does not depend on $s$ because it is constructed via a splitting.

The third auxiliary data is a Riemannian metric on $\mbbP^\prime$.

\begin{definition}\label{def5:metric on mbbP^prime}
  Fix an element $s \in \mscrS$. Let $\mcalG(\mbbP^\prime, s)$ denote the space of all $\Z/2$-invariant Riemannian metrics on $\mbbP^\prime$ that coincide with $g_s^{\mscrV_\HB}$ on $B_{1/4}(\mscrV_\HB)$.
\end{definition}

For any $s \in \mscrS$, $\mcalG(\mbbP^\prime, s)$ is contractible. The following lemma is obvious.

\begin{lemma}
  For each choice of $s \in \mscrS$ and $g_{\mbbP^\prime} \in \mcalG(\mbbP^\prime, s)$, a Clifford action of $T\mbbP^\prime$ on $S_{\mbbP^\prime}$ is determined. This Clifford action is preserved under the isomorphism
  \[
    S_{\mscrV_\HB} \cong S_{\mbbP^\prime}
  \]
  on $B_{1/4}(\mscrV_\HB)$.
\end{lemma}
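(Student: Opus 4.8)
The plan is to define the Clifford action of $T\mbbP^\prime$ on $S_{\mbbP^\prime}$ exactly as the action $c_\mscrV$ of $T\mscrV_\HB$ on $S_{\mscrV_\HB}$ was defined in \cref{lem4:str from splitting}(2), but with $(\mscrV_\HB, g_s^{\mscrV_\HB})$ replaced by $(\mbbP^\prime, g_{\mbbP^\prime})$, and then to check that the two constructions agree over the tubular neighbourhood $B_{1/4}(\mscrV_\HB)$ precisely because the two metrics do.

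First I would construct the Clifford action. The space $\mbbP^\prime = S(V_\HB \oplus V_\HB^\prime)/U(1)$ is a complex manifold, and the involution induced by $j \in Pin(2)$ is anti-holomorphic. Given a $\Z/2$-invariant metric $g_{\mbbP^\prime} \in \mcalG(\mbbP^\prime, s)$, its part compatible with the complex structure $J$ is a $\Z/2$-invariant Hermitian metric on $T\mbbP^\prime$ (the Hermitianization formula is unchanged under $J \mapsto -J$, so $\Z/2$-invariance survives). As in \cref{sssec3:Fred op}, this makes $\Lambda_\C^\ast T\mbbP^\prime$ the complex spinor bundle of $T\mbbP^\prime$, with Clifford multiplication by a real tangent vector $v$ equal to $v^\wedge - v^\lrcorner$: exterior product by the $(0,1)$-part of $v$, minus contraction with its $(1,0)$-part via the Hermitian metric. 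The remaining factors $\underline{\C} \oplus \mcalO(1)$, $\Lambda_\C^\ast \mcalW_\HB$, $\Lambda_\C^\ast \mcalV_\HB^\prime$ of $S_{\mbbP^\prime}$ are passengers; inserting the $\Z/2$-grading involutions $\varepsilon$ in the same pattern as in the formula for $c$ in \cref{sssec3:Fred op} promotes this to a Clifford action $c_{\mbbP^\prime}$ of $T\mbbP^\prime$ on $S_{\mbbP^\prime}$. Using the $\Z/2$-invariance of $g_{\mbbP^\prime}$ and the anti-holomorphy of $j$ one checks that $c_{\mbbP^\prime}$ is compatible with $\tau_{\mbbP^\prime}$ and is $\Z/2$-equivariant, so that it slots into the surrounding framework.

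Then I would verify agreement on $B_{1/4}(\mscrV_\HB)$. As already noted above, the holomorphic embedding $\phi_s$ is a diffeomorphism onto an open subset of $\mbbP^\prime$, so $d\phi_s$ is a $\C$-linear bundle isomorphism $\restr{T\mscrV_\HB}{B_{1/2}(\mscrV_\HB)} \cong \phi_s^\ast T\mbbP^\prime$; combined with the natural identifications of $\mcalO(1)$, $\mcalW_\HB$, $\mcalV_\HB^\prime$ under $\phi_s$ (equivariant for the residual $U(1)$- and $\Z/4$-actions), this is exactly the natural isomorphism $\restr{S_{\mscrV_\HB}}{B_{1/2}(\mscrV_\HB)} \cong \phi_s^\ast S_{\mbbP^\prime}$ recalled earlier. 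Over $B_{1/4}(\mscrV_\HB)$ the defining condition on $\mcalG(\mbbP^\prime, s)$ gives $\phi_s^\ast g_{\mbbP^\prime} = g_s^{\mscrV_\HB}$, hence the Hermitianized metrics agree there, so the contractions $v^\lrcorner$ used on the two sides correspond under $d\phi_s$; since $d\phi_s$ is $\C$-linear it also intertwines the $(1,0)$/$(0,1)$-decompositions and the exterior products, and the involutions $\varepsilon$ are visibly preserved. Therefore the natural isomorphism $S_{\mscrV_\HB} \cong S_{\mbbP^\prime}$ intertwines $c_\mscrV$ and $c_{\mbbP^\prime}$ over $B_{1/4}(\mscrV_\HB)$, which is the assertion.

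I expect the main obstacle to be bookkeeping rather than substance. The two points needing care are: (i) that the complex structure on $\mscrV_\HB$, a priori the one it carries as the total space of a holomorphic vector bundle over $\mbbP$, coincides with the one pulled back from $\mbbP^\prime$ along $\phi_s$ — this is exactly where holomorphy of $\phi_s$ enters; and (ii) that $\mcalO(1)$, $\mcalW_\HB$, $\mcalV_\HB^\prime$ together with all the $\varepsilon$-gradings transport correctly under $\phi_s$, so that the identification $S_{\mscrV_\HB} \cong S_{\mbbP^\prime}$ is genuinely the canonical one rather than merely some abstract isomorphism. A further small check, noted above, is that Hermitianizing a $\Z/2$-invariant metric on $\mbbP^\prime$ preserves $\Z/2$-invariance despite $j$ reversing the complex structure.
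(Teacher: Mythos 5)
Your proposal is correct and reconstructs exactly what the paper implicitly relies on: the lemma is stated without proof, deferring to the bulleted observations that immediately precede it in \cref{ssec4:quat vb}. The decisive facts you isolate — that the Clifford action on $\Lambda_\C^\ast T\mbbP^\prime$ is determined by the Riemannian metric via the wedge-minus-contraction formula, that the holomorphy of $\phi_s$ makes $d\phi_s$ a $\C$-linear identification of tangent bundles, and that the defining condition on $\mcalG(\mbbP^\prime, s)$ forces $g_{\mbbP^\prime}$ to pull back to $g_s^{\mscrV_\HB}$ on $B_{1/4}(\mscrV_\HB)$ — are precisely what the paper intends the reader to supply, and your extra check that Hermitianization preserves $\Z/2$-invariance despite $j$ being anti-holomorphic is a legitimate (if small) point worth recording.
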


Before explaining the last auxiliary data, let us consider the families of Hermitian maps
\[
  h_\mbbP,\ h_{\mbbP^\prime},\ h_{\mscrV_\HB},\ h_{\mscrV_\HB}^\prime
\]
on
\[
  S_{\mbbP},\ S_{\mbbP^\prime}, S_{\mscrV_\HB},\ S_{\mscrV_\HB}^\prime.
\]
These correspond to $h_\HB$ constructed in \cref{sssec4:Fred op} and are necessary to construct families of Fredholm operators in combination with the Dirac operators. However, two things should be noted.
\begin{itemize}
  \item Consider constructing a family of Hermitian maps on $S_{\mscrV_\HB}$. A self-Hermitian map at $v_\HB \in \mscrV_\HB$ is not determined by $v_\HB$ alone. In fact, an element of
        \[
          \mscrV = S(E) \times \mscrV_\HB \times V_\R \times \R^+
        \]
        is needed. In other words, $h_{\mscrV_\HB}$ is defined on the vector bundle obtained by pulling back $S_{\mscrV_\HB}$ to the above space. This is similar to the situation where the map
        \[
          \map{\mcalF}{\mcalV}{\mcalW}
        \]
        in \cref{eq4:normalized F} was needed when defining $h_\HB$.
  \item These Hermitian maps depend on the choices of $s \in \mscrS$ and $g_{\mbbP^\prime} \in \mcalG_{\mbbP}$.
\end{itemize}

Fix an element $s \in \mscrS$ and $g_{\mbbP^\prime} \in \mcalG_{\mbbP}$. Then $\mbbP$ and $\mbbP^\prime$ have Riemannian metrics. $h_\mbbP$ and $h_{\mbbP^\prime}$ can be defined by the same formula as $h_\HB$.

The maps $h_{\mscrV_\HB}$ and $h_{\mscrV_\HB}^\prime$ are defined as follows.
Given a point of
\[
  \mscrV = S(E) \times \mscrV_\HB \times V_\R \times \R^+
\], by sending the point of $\mscrV_\HB$ by $\pi$ and then sending the obtained point of
\[
  S(E) \times \mbbP \times V_\R \times \R^+
\]
by $\mcalF$, a Hermitian map
\begin{align}\label{eq5:h base}
  \bar h_{\text{base}}
\end{align}
on $\Lambda_\C^\ast T\mbbP$ is constructed. Also, from the information in the fiber direction of $\mscrV_\HB$, Hermitian maps
\begin{align}\label{eq5:h fiber}
  \bar h_{\text{fiber}},\ \bar h_{\text{fiber}}^\prime
\end{align}
on
\[
  \Lambda_\C^\ast \mcalV_\HB^\prime \otimes_\C \Lambda_\C^\ast \mcalV_\HB^\prime,\ (\Lambda_\R^\ast \mcalV_\HB^\prime) \otimes_\R \C
\]
are constructed. These come from the family of Hermitian maps $h_{\text{pssho}}$ constructed in \cref{sec8:deform ssho} that are translation-invariant for $r > 1$.

Define the Hermitian maps on $S_{\mscrV_\HB}$ and $S_{\mscrV_\HB}^\prime$ by
\begin{align}\label{eq5:h fiber base}
  h_{\text{base}}        & = \varepsilon \otimes \bar h_{\text{base}} \otimes 1 \otimes 1,\ h_{\text{fiber}} = \varepsilon \otimes \varepsilon \otimes \varepsilon \otimes \bar h_{\text{fiber}},               \\
  h_{\text{base}}^\prime & = \varepsilon \otimes \bar h_{\text{base}} \otimes 1 \otimes 1,\ h_{\text{fiber}}^\prime = \varepsilon \otimes \varepsilon \otimes \varepsilon \otimes \bar h_{\text{fiber}}^\prime.
\end{align}
The maps $h_{\mscrV_\HB}$ and $h_{\mscrV_\HB}^\prime$ are defined by
\[
  h_{\mscrV_\HB} = h_{\text{base}} + h_{\text{fiber}},\ h_{\mscrV_\HB}^\prime = h_{\text{base}}^\prime + h_{\text{fiber}}^\prime.
\]

Using the vector bundles
\[
  S_E  = \underline{\C} \otimes TS(E),\
  S_\R = (\R^+ \times V_\R) \times (\Lambda^\ast_\R \R^+ \otimes_\R \C) \otimes (\Lambda^\ast_\R V_\R \otimes_\R \C)
\]
on $S(E)$ and $V_\R \times \R$, define
\begin{align*}
  \mbfS_\mbbP               & = S_E \boxtimes S_\mbbP \boxtimes S_\R,               \\
  \mbfS_{\mbbP^\prime}      & = S_E \boxtimes S_{\mbbP^\prime} \boxtimes S_\R,      \\
  \mbfS_{\mscrV_\HB}        & = S_E \boxtimes S_{\mscrV_\HB} \boxtimes S_\R,        \\
  \mbfS_{\mscrV_\HB}^\prime & = S_E \boxtimes S_{\mscrV_\HB}^\prime \boxtimes S_\R.
\end{align*}
From the
\[
  h_\mbbP,\ h_{\mbbP^\prime},\ h_{\mscrV_\HB},\ h_{\mscrV_\HB}^\prime
\]
defined above, families of Hermitian maps
\begin{align}\label{eq5:hermitian operator}
  \mbfh_\mbbP,\ \mbfh_{\mbbP^\prime},\ \mbfh_\mscrV,\ \mbfh_\mscrV^\prime
\end{align}
are constructed on each of the vector bundles
\begin{align}\label{eq5:"spin bdl"}
  \begin{split}
    \mbfS_\mbbP               & \to S(E) \times \mbbP \times V_\R \times \R^+,        \\
    \mbfS_{\mbbP^\prime}      & \to S(E) \times \mbbP^\prime \times V_\R \times \R^+, \\
    \mbfS_{\mscrV_\HB}        & \to S(E) \times \mscrV_\HB \times V_\R \times \R^+,   \\
    \mbfS_{\mscrV_\HB}^\prime & \to S(E) \times \mscrV_\HB \times V_\R \times \R^+.
  \end{split}
\end{align}
$S_E$ and $S_\R$ are the vector bundles defined in \cref{eq4:S_E} and \cref{eq4:S_R}, and they have families of Hermitian maps $h_E$, $h_\R$ defined in \cref{eq4:h}. Using these,
\[
  \mbfh_\mbbP = h_E \otimes 1 \otimes 1 + \varepsilon \otimes h_\mbbP \otimes 1 + \varepsilon \otimes \varepsilon \otimes h_\R
\]
is defined. The case of $\mbfh_{\mbbP^\prime}$ is similar. For $\mbfh_\mscrV$, it decomposes into the sum of
\[
  \mbfh_{\text{base}} = h_E \otimes 1 \otimes 1 + \varepsilon \otimes h_{\text{base}} \otimes 1 + \varepsilon \otimes \varepsilon \otimes h_\R,\ \mbfh_{\text{fiber}} = \varepsilon \otimes h_{\text{fiber}} \otimes 1.
\]
Also, $\mbfh_\mscrV^\prime$ decomposes into the sum of
\begin{align}\label{eq5:mbfh base fiber}
  \mbfh_{\text{base}}^\prime = h_E \otimes 1 \otimes 1 + \varepsilon \otimes h_{\text{base}}^\prime \otimes 1 + \varepsilon \otimes \varepsilon \otimes h_\R,\ \mbfh_{\text{fiber}}^\prime = \varepsilon \otimes h_{\text{fiber}}^\prime \otimes 1.
\end{align}

Before returning to the auxiliary data, we prove a lemma.

\begin{lemma}\label{lem5:iso of spinors along fiber}
  Fix $s \in \mscrS$. Then there is a $\Z/4$-equivariant isomorphism
  \[
    S_{\mscrV_\HB} \cong S_{\mscrV_\HB}^\prime
  \]
  canonically constructed from $s$. This preserves the Clifford action on $B_{1/2}(\mscrV_\HB)$. Also, under this isomorphism, $\mbfh_{\mscrV_\HB}$ and $\mbfh_{\mscrV_\HB}^\prime$ coincide on $B_{1/2}(\mscrV_\HB)$.
\end{lemma}

\begin{proof}
  From $s$, the isomorphism
  \[
    \Lambda_\C^\ast T\mscrV_\HB \cong \pi^\ast \Lambda_\C^\ast T\mbbP \otimes \Lambda_\C^\ast T_\mbbP \mscrV_\HB
  \]
  is constructed. As a vector bundle over $\mscrV_\HB$, we have $T_\mbbP \mscrV_\HB \cong \mcalV_\HB^\prime$. From \cref{prop7:ssho equality}, we have
  \[
    \Lambda_\C^\ast \mcalV_\HB^\prime \otimes_\C \Lambda_\C^\ast \mcalV_\HB^\prime \cong \Lambda_\R^\ast \mcalV_\HB^\prime \otimes_\R \C.
  \]
  The discussions so far show the existence of the canonical isomorphism between $S_{\mscrV_\HB}$ and $S_{\mscrV_\HB}^\prime$. From the construction of the Riemannian metric on $\mscrV_\HB$ and \cref{prop7:ssho equality}, on $B_{1/2}(\mscrV_\HB)$, they are isomorphic, including the correspondence of Clifford actions and families of Hermitian maps.
\end{proof}

What remains is to specify the auxiliary data that determines the structure of Clifford bundles on the vector bundles in \cref{eq5:spin bdl on H comp}.

Fix $s \in \mscrS$ and $g_{\mbbP^\prime} \in \mcalG(\mbbP^\prime, s)$. At this point, the structures of Clifford bundles
\[
  (S_\mbbP, c_\mbbP, \nabla_\mbbP),\ (S_{\mbbP^\prime}, c_{\mbbP^\prime}, \nabla_{\mbbP^\prime})
\]
on $S_\mbbP$ and $S_\mbbP^\prime$ are determined. (These are discussed in \cref{sssec4:Fred op}.) So it suffices to specify the structures of Clifford bundles on $S_{\mscrV_\HB}$, $S_{\mscrV_\HB}^\prime$.
The Clifford actions have already been constructed, so the only issue is the choice of connections. The last auxiliary data is these connections themselves. Below, we consider two classes of these connections. One only imposes the minimum conditions necessary for obtaining a family of Fredholm operators, and the other imposes stronger technical conditions.

First, we define the class of connections with looser conditions.

\begin{definition}\label{def5:connection}
  Fix $s \in \mscrS$. Let $\mcalC(s)$ (resp. $\mcalC^\prime(s)$) denote the space of all connections $\nabla_{\mscrV_\HB}$ (resp. $\nabla_{\mscrV_\HB}^\prime$) on $S_{\mscrV_\HB}$ (resp. $S_{\mscrV_\HB}^\prime$) that satisfy the following conditions:
  \begin{itemize}
    \item $\nabla_\mscrV$ (resp. $\nabla_{\mscrV_\HB}^\prime$) is translation-invariant over $S(\mscrV_\HB) \times [1, \infty)$.
    \item $(S_{\mscrV_\HB}, c_{\mscrV_\HB}, \nabla_{\mscrV_\HB})$ (resp. $(S_{\mscrV_\HB}^\prime, c_{\mscrV_\HB}^\prime, \nabla_{\mscrV_\HB}^\prime)$) is a Clifford bundle.
    \item The above Clifford bundle is compatible with the $\Z/4$-action $\tau_\mscrV$ (resp. $\tau_\mscrV^\prime$) on $S_{\mscrV_\HB}$ (resp. $S_{\mscrV_\HB}^\prime$) in the sense of \cref{prop4:cmptbl h-c-t}.
  \end{itemize}
\end{definition}

Fix $\nabla_\mscrV \in \mcalC(s)$. We confirm that a family of Fredholm operators is obtained from this. Let $(\mbfS_\mscrV, c_\mscrV, \nabla_\mscrV)$ denote the Clifford bundle obtained as the tensor product of $(S_{\mscrV_\HB}, c_{\mscrV_\HB}, \nabla_{\mscrV_\HB})$ and the Clifford bundles
\[
  (S_E, c_E, \nabla_E),\ (S_\R, c_\R, \nabla_\R)
\]
constructed in \cref{sssec4:Fred op}. Furthermore, let $\mcalD_\mscrV$ denote the Dirac operator obtained from this. Also, consider the same construction for $\mcalC^\prime(s)$. The following lemma is the analog of \cref{prop4:t-delta pt} for $S_\mscrV$.

\begin{lemma}\label{lem5:t-delta prime}
  Fix $s \in \mscrS$ and $\nabla_\mscrV \in \mcalC(s)$. Define a compact subset $K$ of $\mscrV$ by
  \[
    K = S(E) \times B(\mscrV_\HB) \times B(V_\R) \times B(\R^+).
  \]
  Then the set $T$ of positive real numbers $t$ satisfying the following property is nonempty and contractible: there exists $\bar \lambda > 0$ such that for any $s \geq t$,
  \[
    s(\mcalD_\mscrV \mbfh_\mscrV + \mbfh_\mscrV \mcalD_\mscrV) + s^2 \mbfh_\mscrV^2 > \bar \lambda
  \]
  holds outside $K$. Also, when $t \in T$, the family of operators
  \begin{align}
    \mcalD_\mscrV + t\mbfh_{\mscrV}
  \end{align}
  from the $L^2_1$ space to the $L^2$ space is a family of Fredholm operators parametrized by $S(E)$.

  The same statement holds for $\mcalC^\prime(s)$.
\end{lemma}

Finally, we consider a class of connections with stronger conditions. The condition is about the Dirac operator $\mcalD_\mscrV^\prime$ associated with $\nabla_\mscrV^\prime \in \mcalC^\prime(s)$.

We decompose $\mcalD_\mscrV^\prime$ into two differential operators. On the vector bundle
\[
  \Lambda_\R^\ast \mcalV_\HB^\prime \otimes \C \to \mscrV_\HB,
\]
there is a first-order differential operator $\tmcalD_{\text{pssho}}$. This acts on sections of the above vector bundle by applying the $O(\mscrV_\HB)$-equivariant differential operator $\mcalD_{\text{pssho}}$ constructed in \cref{prop8:ker of ppsho} fiberwise on $\mscrV_\HB$. Using this, define the differential operator constructed on
\[
  \mbfS_\mscrV^\prime = S_E \boxtimes S_{\mscrV_\HB}^\prime \boxtimes S_\R
\]
by
\[
  \mcalD_{\text{fiber}}^\prime = \varepsilon \otimes \tmcalD_{\text{pssho}} \otimes 1.
\]
And let
\[
  \mcalD_{\text{base}}^\prime = \mcalD_\mscrV^\prime - \mcalD_{\text{fiber}}^\prime.
\]
The operator $\mcalD_{\text{fiber}}^\prime$ does not depend on the choice of $\nabla_{\mscrV_\HB}^\prime$, but $\mcalD_{\text{base}}^\prime$ depends on it.

\begin{definition}\label{def5:strict connection}
  Fix $s \in \mscrS$. Let $\mcalC_{\text{strict}}^\prime(s)$ denote the space of elements $\nabla_\mscrV^\prime$ of $\mcalC^\prime(s)$ satisfying the following conditions:
  \begin{itemize}
    \item The following anti-commutativity holds:
          \[
            \mcalD_{\text{base}}^\prime \mcalD_{\text{fiber}}^\prime = -\mcalD_{\text{fiber}}^\prime \mcalD_{\text{base}}^\prime,\ \mcalD_{\text{base}}^\prime \mbfh_{\text{fiber}}^\prime = -\mbfh_{\text{fiber}}^\prime \mcalD_{\text{base}}^\prime,
          \]
          where $\mbfh_{\text{fiber}}^\prime$ are the Hermitian maps defined in \cref{eq5:mbfh base fiber}.
    \item Let $\map{\tilde \pi}{\mscrV}{S(E) \times \mbbP \times V_\R \times \R^+}$ be the natural projection. Take any $a \in \Gamma(\mbfS_\mbbP)$. Also, take any $O(V_\HB^\prime)$-equivariant section $\bar b \in \Gamma(\Lambda_\R^\ast V_\HB^\prime \otimes \C)$. Let $b \in \Gamma(\Lambda_\R^\ast \mcalV_\HB^\prime \otimes \C)$ denote the section obtained by placing $\bar b$ along the fibers of $\Lambda_\R^\ast \mcalV_\HB^\prime \otimes \C$. Then
          \[
            \mcalD_{\text{base}}^\prime (\tilde \pi^\ast a \otimes b) = \tilde \pi^\ast (\mcalD_\mscrF a) \otimes b
          \]
          holds, where $\mcalD_\mscrF$ denotes the family of Dirac operators associated with the Clifford bundle on $\mbfS_\mbbP$.
  \end{itemize}
\end{definition}

Note that
\[
  \mcalD_{\text{fiber}}^\prime \mbfh_{\text{base}}^\prime     = -\mbfh_{\text{base}}^\prime \mcalD_{\text{fiber}}^\prime,\ \mbfh_{\text{base}}^\prime \mbfh_{\text{fiber}}^\prime  = -\mbfh_{\text{fiber}}^\prime \mbfh_{\text{base}}^\prime,
\]
always holds for any element of $\mcalC^\prime(s)$.

\begin{lemma}
  The spaces $\mcalC(s)$, $\mcalC^\prime(s)$, and $\mcalC_{\text{strict}}^\prime(s)$ are nonempty and even contractible for any $s \in \mscrS$.
\end{lemma}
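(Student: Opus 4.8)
The plan is to reduce all three assertions to one structural fact — that each of $\mcalC(s)$, $\mcalC^\prime(s)$, $\mcalC_{\text{strict}}^\prime(s)$ is, once nonempty, an affine subspace of an ambient affine space of connections — and then to exhibit an explicit element of each. Contractibility then follows formally: fix, say, the Clifford module $(S_{\mscrV_\HB}, c_{\mscrV_\HB})$ with its reference metric connection on $T\mscrV_\HB$; the difference of two unitary connections making $(S_{\mscrV_\HB}, c_{\mscrV_\HB}, -)$ a Clifford bundle is a $1$-form on $\mscrV_\HB$ valued in the bundle of skew-Hermitian endomorphisms commuting with $c_{\mscrV_\HB}$, which is a linear space. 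Each of the remaining conditions of \cref{def4:connection} — translation-invariance over $S(\mscrV_\HB) \times [1,\infty)$ and the relation $\tilde\tau_\mscrV \circ \nabla = \nabla \circ \tilde\tau_\mscrV$ of \cref{prop3:cmptbl h-c-t}(3) — is linear in this difference, so $\mcalC(s)$ is an affine subspace, hence convex and contractible once nonempty (the straight-line homotopy to any base point being a contraction). The same holds for $\mcalC^\prime(s)$, and for $\mcalC_{\text{strict}}^\prime(s)$: in \cref{def4:strict connection} only $\mcalD_{\text{base}}^\prime = \mcalD_\mscrV^\prime - \mcalD_{\text{fiber}}^\prime$ depends on the connection, affinely, and the two commutation identities together with $\mcalD_{\text{base}}^\prime(\tilde\pi^\ast a \otimes b) = \tilde\pi^\ast(\mcalD_\mscrF a)\otimes b$ are linear conditions on it. Thus it remains only to construct one connection in each of the three spaces.

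For $\mcalC(s)$ and $\mcalC^\prime(s)$ I would argue exactly as in the proof that $\mcalC_\HB$ is nonempty. Equip $\mscrV_\HB$ with the metric $g_s^{\mscrV_\HB}$ of \cref{lem4:str from splitting}(1). The bundle $S_{\mscrV_\HB}$ of \cref{eq4:spin bdl on H comp} is the tensor product of $\underline\C \oplus \mcalO(1)$ with the exterior algebras of the holomorphic Hermitian bundles $T\mscrV_\HB$ (with metric $g_s^{\mscrV_\HB}$), $\mcalW_\HB$ and $\mcalV_\HB^\prime$; take the connection induced by the trivial connection on $\underline\C$, the standard connection on $\mcalO(1)$, and the connections associated to the Chern connections on the three exterior-algebra factors, which is a Clifford connection in the required sense by the same verification that handles $\mcalC_\HB$. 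It is translation-invariant over $S(\mscrV_\HB) \times [1,\infty)$ because $s \in \mscrS$ is, hence so are $g_s^{\mscrV_\HB}$ and all bundles and connections derived from it; and it commutes with $\tilde\tau_\mscrV$ because $\tau_\mscrV$ is induced by $j \in Pin(2)$, which acts on each factor by an anti-holomorphic isometry, under which the Chern connection is natural. For $S_{\mscrV_\HB}^\prime$ the same recipe applies, with $\Lambda_\C^\ast T\mscrV_\HB$ replaced by $\pi^\ast \Lambda_\C^\ast T\mbbP$ carrying the pulled-back Chern connection and $\Lambda_\C^\ast \mcalV_\HB^\prime$ replaced by $\Lambda_\R^\ast \mcalV_\HB^\prime \otimes_\R \C$ carrying the complexification of a $\Z/2$-invariant metric connection; the three checks go through verbatim. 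This produces points of $\mcalC(s)$ and $\mcalC^\prime(s)$.

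For $\mcalC_{\text{strict}}^\prime(s)$ I would choose the connection so that it respects the $s$-splitting. Using $s$ and the identification of the vertical tangent bundle $T_\mbbP \mscrV_\HB$ with $\pi^\ast \mcalV_\HB^\prime$, one has an isomorphism $S_{\mscrV_\HB}^\prime \cong \pi^\ast S_\mbbP \otimes (\Lambda_\R^\ast \mcalV_\HB^\prime \otimes_\R \C)$ compatible with $c_{\mscrV_\HB}^\prime$, whose horizontal part acts on $\pi^\ast S_\mbbP$ and whose vertical part acts on the last factor. Take $\nabla_{\mscrV_\HB}^\prime = \pi^\ast \nabla_\mbbP \otimes 1 + 1 \otimes \nabla^{\text{fib}}$, where $\nabla_\mbbP$ is the Clifford connection on $S_\mbbP$ entering $\mcalD_\mscrF$ and $\nabla^{\text{fib}}$ is the fiberwise placement of the $O(V_\HB^\prime)$-equivariant, cylindrically-ended connection underlying the construction of \cref{sec8:deform ssho}. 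That $\nabla_{\mscrV_\HB}^\prime \in \mcalC^\prime(s)$ follows factor by factor as in the previous paragraph. With this choice $\mcalD_{\text{base}}^\prime$ differentiates only the horizontal directions and acts as the identity on the last factor, while $\mcalD_{\text{fiber}}^\prime = \varepsilon \otimes \tmcalD_{\text{pssho}} \otimes 1$ acts only on the last factor; the $O(V_\HB^\prime)$-equivariance of $\tmcalD_{\text{pssho}}$ makes a horizontal covariant derivative of a fiberwise-placed equivariant section again fiberwise-placed and equivariant, which yields the two commutators of \cref{def4:strict connection}, and the identity $\mcalD_{\text{base}}^\prime(\tilde\pi^\ast a \otimes b) = \tilde\pi^\ast(\mcalD_\mscrF a) \otimes b$ is immediate from $\nabla_{\mscrV_\HB}^\prime$ reducing to $\pi^\ast \nabla_\mbbP$ on the base factor. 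Hence $\mcalC_{\text{strict}}^\prime(s)$ is nonempty, and as a further affine subspace of $\mcalC^\prime(s)$ it is contractible.

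The main obstacle I anticipate is the verification, in the last paragraph, that the block-diagonal connection $\pi^\ast \nabla_\mbbP \otimes 1 + 1 \otimes \nabla^{\text{fib}}$ is genuinely a Clifford connection for $c_{\mscrV_\HB}^\prime$ while simultaneously forcing $\mcalD_{\text{base}}^\prime$ to behave as a pullback: since $g_s^{\mscrV_\HB}$ makes the $s$-splitting orthogonal but its Levi--Civita connection need not preserve that splitting, one must pin down precisely which reference connection on $T\mscrV_\HB$ enters the definition of a Clifford bundle here and check that the block-diagonal choice is admissible — equivalently, that the second-fundamental-form terms one discards lie in the commutant of the Clifford action and spoil neither the Clifford--Leibniz rule nor the strict commutation relations. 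A secondary point is to confirm that $\nabla^{\text{fib}}$ is compatible with the product structure over the cylindrical end $S(\mscrV_\HB) \times [1,\infty)$, which should follow from the translation-invariance built into $\mscrS$ and into \cref{sec8:deform ssho}. Everything else — the affineness underlying contractibility, and the factor-by-factor checks of the Clifford, translation-invariance and $\Z/4$-equivariance properties — I regard as routine.
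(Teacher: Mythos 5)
Your contractibility argument (each space is an affine subspace of an ambient affine space, hence convex, hence contractible) is the right one and matches the paper's intent; and the factor-by-factor Chern-connection construction for $\mcalC(s)$ and $\mcalC^\prime(s)$ parallels what the paper does for $\mcalC_\HB$ earlier. The divergence — and the gap — is in the nonemptiness of $\mcalC_{\text{strict}}^\prime(s)$, which is precisely the part the paper singles out. The paper does \emph{not} attempt a global block-diagonal connection. Instead it covers $\mbbP = S(V_\HB)/U(1)$ by open sets $U_\alpha$ over which $\mscrV_\HB$ trivializes, takes on each $U_\alpha$ the honest product connection under the induced identification $\restr{S_{\mscrV_\HB}^\prime}{U_\alpha} \cong S_\mbbP \boxtimes (V_\HB^\prime \times (\Lambda_\R^\ast \mcalV_\HB^\prime \otimes \C))$, patches them by $\sum_\alpha \rho_\alpha \nabla_\alpha \rho_\alpha$ for a partition of unity $\{\rho_\alpha^2\}$, and finally averages over the $\Z/4$-action. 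This sidesteps the two issues you run into: the non-triviality of $\mscrV_\HB \to \mbbP$, and the question of $\Z/4$-equivariance (handled by averaging rather than by naturality of a preferred connection).

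Your formula $\nabla_{\mscrV_\HB}^\prime = \pi^\ast \nabla_\mbbP \otimes 1 + 1 \otimes \nabla^{\text{fib}}$ does not, as written, determine a connection on the bundle $S_{\mscrV_\HB}^\prime \to \mscrV_\HB$: $\nabla^{\text{fib}}$, described as ``the fiberwise placement of the cylindrically-ended connection,'' only prescribes covariant differentiation in the vertical directions. To differentiate $\Lambda_\R^\ast \mcalV_\HB^\prime \otimes_\R \C$ horizontally one also needs a choice of connection on the (nontrivial) bundle $\mcalV_\HB^\prime \to \mbbP$, and once that choice is injected the resulting $\mcalD_{\text{base}}^\prime$ picks up horizontal derivatives of $b$ along the fibers, so the identity $\mcalD_{\text{base}}^\prime(\tilde\pi^\ast a \otimes b) = \tilde\pi^\ast(\mcalD_\mscrF a) \otimes b$ is no longer ``immediate'' and needs the $O(V_\HB^\prime)$-equivariance of $\bar b$ to be invoked carefully. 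More seriously, you correctly flag but do not close the Clifford-compatibility question: whether the block-diagonal connection satisfies $[\nabla, c(X)] = c(\nabla^{T\mscrV_\HB} X)$ with the reference connection that the paper's notion of Clifford bundle actually uses — and if that reference is Levi--Civita of $g_s^{\mscrV_\HB}$, the Levi--Civita connection generically fails to preserve the $s$-splitting, so the discarded second-fundamental-form terms are not in the commutant of $c$ and the naive block-diagonal connection is not a Clifford connection. You would either have to show the paper's definition tolerates a non-Levi--Civita metric reference connection, or add the usual $\tfrac14\sum A_{ij}c(e_i)c(e_j)$ correction and then re-verify the strict commutation relations, which no longer factorize. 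The paper's local-trivialization-and-average route avoids this entirely.
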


\begin{proof}
  We only prove that $\mcalC_{\text{strict}}^\prime(s)$ is nonempty. Let $\{U_\alpha\}$ be an open cover of $S(V_\HB)/U(1)$ consisting of open sets that admit a local trivialization of $\mscrV_\HB$. Let $\{\rho_\alpha^2\}$ be a partition of unity subordinate to $\{U_\alpha\}$. On each $\restr{S_{\mscrV_\HB}^\prime}{U_\alpha}$, we can take a connection $\nabla_\alpha$ that satisfies all conditions except for $\Z/4$-invariance. This can be constructed as follows. Taking a local trivialization
  \[
    \restr{\mscrV_\HB}{U_\alpha} \cong U_\alpha \times V_\HB,
  \]
  we have a natural identification
  \[
    \restr{S_{\mscrV_\HB}^\prime}{U_\alpha} \cong S_\mbbP \boxtimes (V_\HB \times (\Lambda_\R^\ast \mcalV_\HB^\prime \otimes \C)).
  \]
  Using this, it suffices to take the tensor of the connections on both sides as $\nabla_\alpha$.

  Finally, taking the average of $\sum_\alpha \rho_\alpha \nabla_\alpha \rho_\alpha$ under the $\Z/4$-action gives an element of $\mcalC_{\text{strict}}^\prime(s)$.
\end{proof}

Under the above preparation, we prove \cref{prop5:spin mscrV} and \cref{prop5:can iso for vari spin}.

\begin{proof}[Proof of \cref{prop5:spin mscrV}]
  Take $s \in \mscrS$. The two vector bundles $\mbfS_\mscrV$ and $\mbfS_\mscrV^\prime$ on
  \[
    \mscrV = S(E) \times \mscrV_\HB \times V_\R \times \R^+
  \]
  are defined by
  \[
    \mbfS_\mscrV = S_E \boxtimes S_{\mscrV_\HB} \boxtimes S_\R,\ \mbfS_\mscrV^\prime = S_E \boxtimes S_{\mscrV_\HB}^\prime \boxtimes S_\R.
  \]
  On $\mbfS_\mscrV$ and $\mbfS_\mscrV^\prime$, by taking auxiliary data $\nabla_\mscrV \in \mcalC(s)$ and $\nabla_\mscrV^\prime \in \mcalC^\prime(s)$ defined in \cref{def5:connection}, the Clifford bundle structures are constructed. Also, in \cref{eq5:hermitian operator}, families of Hermitian maps $\mbfh_\mscrV$, $\mbfh_\mscrV^\prime$ were constructed on $\mbfS_\mscrV$ and $\mbfS_\mscrV^\prime$, respectively. From \cref{lem5:t-delta prime}, when $t$ is sufficiently large,
  \[
    \mcalD_\mscrV + t \mbfh_\mscrV,\ \mcalD_\mscrV^\prime + t \mbfh_\mscrV^\prime
  \]
  are families of Fredholm operators. Therefore, their determinant line bundles
  \[
    \det(\mcalD_\mscrV + t \mbfh_\mscrV),\ \det(\mcalD_\mscrV^\prime + t \mbfh_\mscrV^\prime)
  \]
  can be constructed. These bundles, together with the anti-linear maps
  \begin{align*}
    \map{\tiota_\mscrV        & }{\det(\mcalD_\mscrV + t \mbfh_\mscrV)}{\det(\mcalD_\mscrV + t \mbfh_\mscrV)},                            \\
    \map{\tiota_\mscrV^\prime & }{\det(\mcalD_\mscrV^\prime + t \mbfh_\mscrV^\prime)}{\det(\mcalD_\mscrV^\prime + t \mbfh_\mscrV^\prime)}
  \end{align*}
  constructed from these and $\tau_\mscrV$, $\tau_\mscrV^\prime$ are triples for spin structure on $E$. Therefore, from \cref{prop3:spin from triple}, spin structures $\mfrakt_\mscrV$, $\mfrakt_\mscrV^\prime$ on $E$ are constructed. Since $\mcalC^\prime(s)$ and $\mscrS$ are contractible, these are independent of the choices of $s$ and $\nabla_\mscrV^\prime$ from \cref{prop3:simp-conn}.
\end{proof}

\begin{proof}[Proof of \cref{prop5:can iso for vari spin}]
  We construct the isomorphism (1): $\mfrakt_\mscrF \cong \mfrakt_\mscrV^\prime$. Take $s \in \mscrS$ and $\nabla_\mscrV^\prime \in \mcalC_{\text{strict}}(s)$. We show that when $t$ is taken sufficiently large, there is a canonical $\Z/4$-equivariant isomorphism between
  \[
    \Ker (\mcalD_\mbbP + t \mbfh_\mbbP),\ \Ker(\mcalD_\mscrV^\prime + t \mbfh_\mscrV^\prime).
  \]
  From the commutativity assumed in \cref{def5:strict connection}, the kernel of $\mcalD_\mscrV^\prime + t \mbfh_\mscrV^\prime$ is the intersection of the kernels of $\mcalD_{\text{base}} + t \mbfh_{\text{base}}$ and $\mcalD_{\text{fiber}} + t \mbfh_{\text{fiber}}$.
  If $t$ is sufficiently large, the kernel of $\mcalD_{\text{fiber}} + t \mbfh_{\text{fiber}}$ takes the form of a tensor product of
  \begin{itemize}
    \item a section $a$ of $\mbfS_\mbbP$,
    \item a section $b_{\text{pssho}} \in \Gamma(\Lambda_\R^\ast V_\HB \otimes_\R \C)$ obtained by placing the standard solution $\bar b_{\text{pssho}}$ of the pseudo-super-symmetric harmonic oscillator on $V_\HB^\prime \times (\Lambda_\R^\ast V_\HB \otimes_\R \C)$ (See \cref{def8:std sol of pssho}.) along the fibers.
  \end{itemize}
  Since $\bar b_{\text{pssho}}$ is $O(V_\HB^\prime)$-invariant, from the assumption in \cref{def5:strict connection},
  \[
    (\mcalD_{\text{base}} + t \mbfh_{\text{base}})(\tilde \pi^\ast a \otimes b_{\text{pssho}}) = \tilde \pi^\ast (\mcalD_\mbbP + \mbfh_\mbbP) a \otimes b_{\text{pssho}}
  \]
  holds. Therefore, if this is 0, then $a$ is an element of $\Ker (\mcalD_\mbbP + t\mbfh_\mbbP)$. This constructs the desired $\Z/4$-equivariant isomorphism. Since a $\Z/4$-equivariant isomorphism can also be constructed for their determinant line bundles, we obtain an isomorphism between $\mfrakt_\mscrF$ and $\mfrakt_\mscrV^\prime$. Since $\mcalC_{\text{strict}}^\prime(s)$ and $\mscrS$ are contractible, it is independent of the choices of $s$ and $\nabla_\mscrV^\prime$.

  We construct the isomorphism (2): $\mfrakt_{\mscrV_\HB}^\prime \cong \mfrakt_{\mscrV_\HB}$. Take $s \in \mscrS$. Take any $\nabla_{\mscrV_\HB}^\prime \in \mcalC_{\text{strict}}^\prime(s)$. Take $\nabla_{\mscrV_\HB} \in \mcalC(s)$ such that it coincides with $\nabla_{\mscrV_\HB}$ on $B_{1/2}(\mscrV_\HB)$ under the isomorphism
  \begin{align}\label{eq5:Cl bdl iso mscrV_HB}
    S_{\mscrV_\HB} \cong S_{\mscrV_\HB}^\prime
  \end{align}
  in \cref{lem5:iso of spinors along fiber}. From the claim of \cref{lem5:str from splitting}(3) about $h$,
  \begin{align*}
     & \mcalD_\mscrV = \mcalD_\mscrV^\prime,\ \mbfh_{\text{base}} = \mbfh_{\text{base}}^\prime,\ \mbfh_{\text{fiber}} = \mbfh_{\text{fiber}}^\prime
  \end{align*}
  holds under \cref{eq5:Cl bdl iso mscrV_HB} on
  \[
    S(E) \times B_{1/2}(\mscrV_\HB) \times V_\R \times \R^+.
  \]

  Using two positive real numbers $t$ and $u$, we consider families of differential operators represented by
  \[
    \mcalD_\mscrV + t \mbfh_{\text{base}} + u \mbfh_{\text{fiber}},\ \mcalD_\mscrV^\prime + t \mbfh_{\text{base}}^\prime + u \mbfh_{\text{fiber}}^\prime.
  \]
  If $t$ and $u$ are taken sufficiently large, these are families of $\Z/4$-equivariant Fredholm operators parametrized by $S(E)$. The spin structures constructed from these are canonically isomorphic to $\mfrakt_\mscrV$ and $\mfrakt_\mscrV^\prime$. Therefore, it suffices to construct a canonical isomorphism based on these operators. The strategy is to use the Witten deformation technique. (For a foundational formulation of the Witten deformation, see, for example, Zhang\cite{Zhang-Witten-deformation-2001}. In our situation, a more advanced and detailed analysis is essential. The readers are also referred to Furuta\cite{Furuta2007Index-Theorem} and Miyazawa\cite{miyazawa2021localization}.) Roughly speaking, when $t$ and $u$ are sufficiently large, the vector spaces consisting of eigenvectors corresponding to eigenvalues below a certain positive real number $\lambda$ of each of
  \[
    (\mcalD_\mscrV + t \mbfh_{\text{base}} + u \mbfh_{\text{fiber}})^2,\ (\mcalD_\mscrV^\prime + t \mbfh_{\text{base}}^\prime + u \mbfh_{\text{fiber}}^\prime)^2
  \]
  are isomorphic. However, there are several technical hurdles, such as that $\lambda$ must not be an eigenvalue. Due to that difficulty, we argue as follows.

  Fix $\bar \lambda > 0$. From \cref{prop4:t-delta pt}, for all sufficiently large $t$, outside of $S(E) \times \mbbP \times B(V_\R) \times B(\R^+)$,
  \[
    t(\mcalD_{\mbbP} \mbfh_{\mbbP} + \mbfh_{\mbbP}\mcalD_{\mbbP}) + t^2 \mbfh_{\mbbP}^2 > \bar \lambda
  \]
  holds. Fix such a $t$, and let
  \[
    \mcalD_{\mscrV_\HB, t} = \mcalD_{\mscrV_\HB} + t \mbfh_{\text{base}},\ \mcalD_{\mscrV_\HB, t}^\prime = \mcalD_{\mscrV_\HB}^\prime + t \mbfh_{\text{base}}^\prime.
  \]

  If $u$ is taken sufficiently large compared to $t$, outside of $S(E) \times B(\mscrV_{\HB}) \times V_\R \times \R^+$,
  \begin{align*}
    u(\mcalD_{\mscrV_\HB, t} \mbfh_{\text{fiber}} + \mbfh_{\text{fiber}} \mcalD_{\mscrV_\HB, t}) + u^2 \mbfh_{\text{fiber}}^2 > \bar \lambda, \\
    u(\mcalD_{\mscrV_\HB, t} \mbfh_{\text{fiber}}^\prime + \mbfh_{\text{fiber}}^\prime \mcalD_{\mscrV_\HB, t}) + u^2 \mbfh_{\text{fiber}}^{\prime 2} > \bar \lambda
  \end{align*}
  hold. Also, if $u$ is sufficiently large, the minimum nonzero eigenvalue of
  \[
    (\mcalD_{\text{pssho}} + u h_{\text{pssho}})^2
  \]
  is greater than $\bar \lambda$. Here, $\mcalD_{\text{pssho}}$ and $h_{\text{pssho}}$ are the operators explained in \cref{prop8:ker of ppsho}. Then, it can be seen that the spectrum of
  \[
    (\mcalD_\mscrV^\prime + t \mbfh_{\text{base}}^\prime + u \mbfh_{\text{fiber}}^\prime)^2
  \]
  between $[0, \bar \lambda]$ is discrete and does not depend on $u$.

  Fix any $e \in S(E)$. Take $\lambda \in (0, \bar \lambda)$ so that it is not an eigenvalue of
  \[
    (\mcalD_{\mbbP} + t\mbfh_{\mbbP})^2
  \]
  for the restriction to $e$. If $u$ is sufficiently large, $\lambda$ is not an eigenvalue of
  \[
    (\mcalD_\mscrV^\prime + t \mbfh_{\text{base}}^\prime + u \mbfh_{\text{fiber}}^\prime)^2
  \]
  either. Then, by using the Witten deformation technique, it is shown that there exists a $\Z/2$-invariant open neighborhood $U_{\lambda, e}$ of $e \in S(E)$ such that for sufficiently large $u$, $\lambda$ is not the eigenvalue of $(\mcalD_\mscrV + t \mbfh_{\text{base}} + u \mbfh_{\text{fiber}})^2$ and a $\Z/4$-equivariant isomorphism
  \[
    \map{\Phi_{\lambda, e, u}}{\restr{E_{\leq \lambda}((\mcalD_\mscrV^\prime + t \mbfh_{\text{base}}^\prime + u \mbfh_{\text{fiber}}^\prime)^2)}{U_{\lambda, e}}}{\restr{E_{\leq \lambda}((\mcalD_\mscrV + t \mbfh_{\text{base}} + u \mbfh_{\text{fiber}})^2)}{U_{\lambda, e}}}
  \]
  is constructed, where
  \[
    \restr{E_{\leq \lambda}((\mcalD_\mscrV^\prime + t \mbfh_{\text{base}}^\prime + u \mbfh_{\text{fiber}}^\prime)^2)}{U_{\lambda, e}}
  \]
  denotes the vector bundle consisting of sections with eigenvalues less than or equal to $\bar \lambda$. This isomorphism is constructed by composing the multiplication by some cut-off function $\rho$ and the $L^2$-projection. Precisely, $\map{\rho}{S_{\mscrV_{\HB}}}{[0, 1]}$ is a $\Z/2$-invariant smooth function which is identically 1 on $B_{1/8}(S_{\mscrV_{\HB}})$ and 0 outside $B_{1/4}(S_{\mscrV_{\HB}})$. By taking the determinant, a $\Z/4$-equivariant isomorphism
  \[
    \map{\det \Phi_{\lambda, e, u}}{\restr{\det(\mcalD_\mscrV^\prime + t \mbfh_{\text{base}}^\prime + u \mbfh_{\text{fiber}}^\prime)}{U_{\lambda, e}}}{\restr{\det (\mcalD_\mscrV + t \mbfh_{\text{base}} + u \mbfh_{\text{fiber}})}{U_{\lambda, e}}}
  \]
  is obtained for $u$ sufficiently large. The map $\det \Phi_{\lambda, e, u}$ is not a global isomorphism over $S(E)$. Therefore, we glue the isomorphisms with different $\lambda$'s, $e$'s and $u$'s by using $\Z/2$-invariant partition of unity. The following lemma is key to prove that the resulting map
  \[
    \det(\mcalD_\mscrV^\prime + t \mbfh_{\text{base}}^\prime + u \mbfh_{\text{fiber}}^\prime) \to \det (\mcalD_\mscrV + t \mbfh_{\text{base}} + u \mbfh_{\text{fiber}})
  \]
  is a $\Z/4$-invariant isomorphism for $u$ sufficiently large, and gives the desired isomorphism between $\mfrakt_{\mscrV}^\prime$ and $\mfrakt_{\mscrV}$. The proof of this lemma is an application of the Witten deformation technique.

  \begin{lemma}
    Let $\lambda^\prime \in (0, \bar \lambda)$, $e^\prime \in S(E)$ and $U_{\lambda^\prime, e^\prime}$ be other choices of the above data, and assume $U_{\lambda, e} \cap U_{\lambda^\prime, e^\prime}$ is nonempty. Take $e^\pprime \in U_{\lambda, e} \cap U_{\lambda^\prime, e^\prime}$ arbitrarily. Then for any $\varepsilon > 0$, there exists $u_0 > 0$ and an open neighborhood $V \subset U_{\lambda, e} \cap U_{\lambda^\prime, e^\prime}$ of $e^\pprime$ such that for any $u \geq u_0$
    \[
      \abs{1 - \det \Phi_{\lambda, e, u}^{-1} \det \Phi_{\lambda^\prime, e^\prime, u}} < \varepsilon
    \]
    on $V$. Here we regard
    \[
      \map{\det \Phi_{\lambda, e, u}^{-1} \det \Phi_{\lambda^\prime, e^\prime, u}}{\det(\mcalD_\mscrV^\prime + t \mbfh_{\text{base}}^\prime + u \mbfh_{\text{fiber}}^\prime)}{\det(\mcalD_\mscrV^\prime + t \mbfh_{\text{base}}^\prime + u \mbfh_{\text{fiber}}^\prime)}
    \]
    as a function from $U_{\lambda, e} \cap U_{\lambda^\prime, e^\prime}$ to $\C$.
  \end{lemma}

  \begin{proof}
    Without loss of generality we can assume $\lambda \leq \lambda^\prime$. In the following, we drop the restriction to $U_{\lambda, e} \cap U_{\lambda^\prime, e^\prime}$ from the notation. Let
    \[
      E_\lambda^{\lambda^\prime}((\mcalD_\mscrV^\prime + t \mbfh_{\text{base}}^\prime + u \mbfh_{\text{fiber}}^\prime)^2),\ E_\lambda^{\lambda^\prime}((\mcalD_\mscrV + t \mbfh_{\text{base}} + u \mbfh_{\text{fiber}})^2)
    \]
    be the subbundle which is spanned by eigenvectors with eigenvalue in $[\lambda, \lambda^\prime]$. Then we have an orthogonal decomposition
    \begin{align*}
        & E_{\leq \lambda^\prime}((\mcalD_\mscrV^\prime + t \mbfh_{\text{base}}^\prime + u \mbfh_{\text{fiber}}^\prime)^2)                                                                                                                     \\
      = & E_{\leq \lambda}((\mcalD_\mscrV^\prime + t \mbfh_{\text{base}}^\prime + u \mbfh_{\text{fiber}}^\prime)^2) \oplus E_\lambda^{\lambda^\prime}((\mcalD_\mscrV^\prime + t \mbfh_{\text{base}}^\prime + u \mbfh_{\text{fiber}}^\prime)^2)
    \end{align*}
    and
    \begin{align*}
        & E_{\leq \lambda^\prime}((\mcalD_\mscrV + t \mbfh_{\text{base}} + u \mbfh_{\text{fiber}})^2)                                                                                                \\
      = & E_{\leq \lambda}((\mcalD_\mscrV + t \mbfh_{\text{base}} + u \mbfh_{\text{fiber}})^2) \oplus E_\lambda^{\lambda^\prime}((\mcalD_\mscrV + t \mbfh_{\text{base}} + u \mbfh_{\text{fiber}})^2)
    \end{align*}
    as well for $u$ sufficiently large.

    Define
    \[
      \map{\Phi_{\lambda, u}^{\lambda^\prime}}{E_{\leq \lambda^\prime}((\mcalD_\mscrV^\prime + t \mbfh_{\text{base}}^\prime + u \mbfh_{\text{fiber}}^\prime)^2)}{E_{\leq \lambda^\prime}((\mcalD_\mscrV^\prime + t \mbfh_{\text{base}}^\prime + u \mbfh_{\text{fiber}}^\prime)^2)}
    \]
    as the composition of multiplication by $\rho$ and the $L^2$ projection to $E_\lambda^{\lambda^\prime}((\mcalD_\mscrV + t \mbfh_{\text{base}} + u \mbfh_{\text{fiber}})^2)$, and set
    \[
      \Psi_{\lambda, u}^{\lambda^\prime} = \Phi_{\lambda, e, u} \oplus \Phi_{\lambda, u}^{\lambda^\prime}.
    \]
    For any $u$, we have the following estimates for the norms of the operators with respect to the $L^2$ norms:
    \[
      \norm{\Phi_{\lambda, e, u}} \leq 1,\ \norm{\Phi_{\lambda^\prime, e^\prime, u}} \leq 1,\ \norm{\Psi_{\lambda, u}^{\lambda^\prime}} \leq 1.
    \]
    By the Witten deformation technique, we also have the following estimates: for every $e^\pprime \in U_{\lambda, e} \cap U_{\lambda^\prime, e^\prime}$ and every $\varepsilon^\prime > 0$, there exists an open neighborhood $V$ of $e^\pprime$ and $u_0 > 0$ such that
    \begin{align*}
      \norm{\Phi_{\lambda, e, u}^{-1}} < 1 + \varepsilon^\prime,\ \norm{\Phi_{\lambda^\prime, e^\prime, u}^{-1}} < 1 + \varepsilon^\prime,\ \norm{(\Psi_{\lambda, u}^{\lambda^\prime})^{-1}} < 1 + \varepsilon^\prime,\ \norm{\Phi_{\lambda^\prime, e^\prime, u} - \Psi_{\lambda, u}^{\lambda^\prime}} < \varepsilon^\prime, \\
      \norm{\Phi_{\lambda, u}^{\lambda^\prime} \circ (\mcalD_\mscrV^\prime + t \mbfh_{\text{base}}^\prime + u \mbfh_{\text{fiber}}^\prime) - (\mcalD_\mscrV + t \mbfh_{\text{base}} + u \mbfh_{\text{fiber}}) \circ \Phi_{\lambda, u}^{\lambda^\prime}} < \varepsilon^\prime
    \end{align*}
    on $V$ for any $u \geq u_0$. From these estimates it can be shown that
    \begin{align*}
      \abs{1 - \det \Phi_{\lambda^\prime, e^\prime, u}^{-1} \det \Psi_{\lambda, u}^{\lambda^\prime}} & < C(\varepsilon^\prime), \\
      \abs{1 - \det \Phi_{\lambda, e, u}^{-1} \det \Psi_{\lambda, u}^{\lambda^\prime}}               & < C(\varepsilon^\prime)
    \end{align*}
    on $V$ for any $u \geq u_0$, where $C(\varepsilon^\prime)$ converges to 0 as $\varepsilon^\prime$ goes to 0. This finishes the proof.
  \end{proof}

  We construct the isomorphism (3): $\mfrakt_{\mscrV} \cong \mfrakt_{\mscrF^\prime}$. Fix $s \in \mscrS$ and $g^\prime \in \mcalG(\mbbP^\prime, s)$. Recall that by the embedding $\phi$ defined in \cref{eq5:embedding},
  \[
    B_{1/2}(\mscrV_\HB)
  \]
  is identified with an open subset of $\mbbP^\prime$. Thus the restrictions of the two vector bundles
  \[
    S_{\mbbP^\prime} \to \mbbP^\prime,\ S_{\mscrV} \to \mscrV_\HB
  \]
  to $B_{1/4}(\mscrV_\HB)$ are canonically isomorphic including the Clifford action. Note that the family of Hermitian maps $h_{\mbbP^\prime}$ defined on
  \[
    S_{\mbbP^\prime} = (\underline{\C} \oplus \mcalO(1)) \otimes \Lambda_\C^\ast T\mbbP^\prime \otimes \Lambda_\C^\ast \mcalW_\HB \otimes \Lambda_\C^\ast \mcalV_\HB^\prime
  \]
  has the following decomposition:
  \begin{align*}
    h_{\mbbP^\prime} & = h_{\mbbP^\prime}^{\text{base}} + h_{\mbbP^\prime}^{\text{fiber}}.
  \end{align*}
  Here, $h_{\mbbP^\prime}^{\text{base}}$ is the term coming from the family of Hermitian maps on $(\underline{\C} \oplus \mcalO(1)) \otimes \Lambda_\C^\ast T\mbbP^\prime \otimes \Lambda_\C^\ast \mcalW_\HB$, and $h_{\mbbP^\prime}^{\text{fiber}}$ is the term coming from the family of Hermitian maps on $\Lambda_\C^\ast \mcalV_\HB^\prime$. As they are, $h_{\mbbP^\prime}^{\text{base}}$ and $h_{\mscrV_\HB}^{\text{base}}$ are not equal on $B_{1/4}(\mscrV_\HB)$. However, by slightly modifying $h_{\mbbP^\prime}^{\text{base}}$, we can make them equal on $B_{1/4}(\mscrV_\HB)$. Let us denote the modified one by $\tilde h_{\mbbP^\prime}^{\text{base}}$. Similarly, by suitably taking a slight modification $\tilde h_{\mbbP^\prime}^{\text{fiber}}$ of $h_{\mbbP^\prime}^{\text{fiber}}$, we can make it equal to $h_{\mscrV_\HB}^{\text{fiber}}$ on $B_{1/4}(\mscrV_\HB)$. Furthermore, the spin structure on $E$ constructed using $\tilde h_{\mbbP^\prime}^{\text{base}} + \tilde h_{\mscrV_\HB}^{\text{fiber}}$ instead of $h_{\mbbP^\prime}$ is canonically isomorphic to $\mfrakt_{\mscrF^\prime}$. Therefore, it suffices to construct
  \[
    \mfrakt_{\mscrV_\HB} \cong \mfrakt_{\mscrF^\prime}
  \]
  using these modified families of Hermitian maps.

  Take $\nabla_{\mscrV_\HB} \in \mcalC(s)$ in the way specified in (2). Also, take a $\Z/4$-invariant connection $\nabla_{\mbbP^\prime}$ on $S_{\mbbP^\prime}$ such that
  \begin{itemize}
    \item it coincides with $\nabla_{\mscrV_\HB}$ on $B_{1/4}(\mscrV_\HB)$, and
    \item $(S_{\mbbP^\prime}, c_{\mbbP^\prime}, \nabla_{\mbbP^\prime})$ is a Clifford bundle.
  \end{itemize}
  Let $\mcalD$, $\mcalD_{\mbbP^\prime}$ denote the families of Dirac operators on $\mbfS_{\mscrV_\HB}$, $\mbfS_{\mbbP^\prime}$ constructed from each. Using positive real numbers $t$ and $u$, consider the two families of operators represented by
  \begin{align*}
     & \mcalD + t \mbfh_{\mscrV_\HB}^{\text{base}} + u \mbfh_{\mscrV_\HB}^{\text{fiber}},                                \\
     & \mcalD_{\mbbP^\prime} + t \tilde \mbfh_{\mbbP^\prime}^{\text{base}} + u \tilde \mbfh_{\mscrV_\HB}^{\text{fiber}}.
  \end{align*}
  By the same argument as in the proof of (2), when $t$ and $u$ are sufficiently large, both are families of Fredholm operators and moreover, a $\Z/4$-equivariant isomorphism between their determinant line bundles is constructed. Therefore, we have a canonical isomorphism between $\mfrakt_{\mscrV_\HB}$ and $\mfrakt_{\mscrF^\prime}$.
\end{proof}

\subsection{The case of taking direct sums of both types of vector bundles}\label{ssec5:both vbs}

In \cref{ssec5:real vb} and \cref{ssec5:quat vb}, we only considered the cases of taking direct sums of real vector bundles and quaternionic vector bundles separately. In this subsection, we consider the case of taking direct sums of these simultaneously. The following theorem can be proved by carefully observing the constructions made in this section.

\begin{theorem}\label{thm5:stab in order}
  Let $U$ be a topological space which is locally simply-connected and homeomorphic to an open set of some paracompact Hausdorff space. Let $E \to U$ be a rank 3 vector bundle over $U$ with an orientation and a metric. Let $\mscrF$ be a model of FDA for families of h$K3$. Let $V_\R^\prime$ be a $Pin(2)$-equivariant real vector bundle with a metric, and $V_\HB^\prime$ be a $Pin(2)$-equivariant quaternionic vector bundle with a metric. Let $\mscrF^\prime$ be a model of FDA for families of h$K3$ obtained by taking the direct sum of $\mscrF$ with $V_\R^\prime$ and $V_\HB^\prime$. The isomorphism
  \[
    \map{\Phi_{\mscrF^\prime \mscrF}}{\mfrakt_\mscrF}{\mfrakt_{\mscrF^\prime}}
  \]
  between the spin structures $\mfrakt_{\mscrF}$ and $\mfrakt_{\mscrF^\prime}$ on $E$ made from $\mscrF$ and $\mscrF^\prime$ does not depend on the order of taking the direct sums of $V_\R^\prime$ and $V_\HB^\prime$. Also, when $\mscrF^\pprime$ is a model of FDA for families of h$K3$ obtained by taking the direct sum of $\mscrF^\prime$ with a $Pin(2)$-equivariant real vector bundle $V_\R^\pprime$ with a metric and a $Pin(2)$-equivariant quaternionic vector bundle $V_\HB^\pprime$ with a metric,
  \[
    \Phi_{\mscrF^\pprime\mscrF^\prime} \circ \Phi_{\mscrF^\prime\mscrF} = \Phi_{\mscrF^\pprime\mscrF}
  \]
  holds.
\end{theorem}

Finally, we prove \cref{thm5:can iso}.

\begin{proof}[Proof of \cref{thm5:can iso}]
  Let $(R_1, U_1, \varepsilon_1, \lambda_1),\ (R_2, U_2, \varepsilon_2, \lambda_2) \in \tilde \mcalA$. (For the definition of $\tilde \mcalA$, see \cref{def4:proj}.) If we choose
  \[
    p_1 \in \mcalP(R_1, U_1, \varepsilon_1, \lambda_1),\ p_2 \in (R_2, U_2, \varepsilon_2, \lambda_2),
  \]
  then models $\mscrF_1$ and $\mscrF_2$ of FDA for families of h$K3$
  for $E$ are constructed from each of them. Let $\mfrakt_1$ and $\mfrakt_2$ denote the spin structures on $E$ constructed from these. We will prove that a canonical isomorphism is constructed between their restrictions to $U_1 \cap U_2$.

  Without loss of generality, we can assume that $R_1 \leq R_2$. From \cref{lem4:cov of B}(2), the following claim is verified.

  \begin{claim}\label{clm5:op cov}
    Define the set $\mcalB$ by
    \[
      \mcalB = \set{(U, \varepsilon, \lambda)}{U \subset U_1 \cap U_2\ \text{and}\ (R^\pprime, U, \varepsilon, \lambda) \in \tilde \mcalA\ \text{for all $R^\pprime \in [R_1, R_2]$}}.
    \]
    Then,
    \[
      \set{U \subset U_1 \cap U_2}{\exists \varepsilon > 0\ \exists \lambda > 0\ (U, \varepsilon, \lambda) \in \mcalB}
    \]
    is an open cover of $U_1 \cap U_2$.
  \end{claim}

  Take $(U, \varepsilon, \lambda) \in \mcalB$ with $\lambda$ larger than $\lambda_1$ and $\lambda_2$. First, we show that an isomorphism
  \[
    \restr{\mfrakt_1}{U} \to \restr{\mfrakt_2}{U}
  \]
  of spin structures on $\restr{E}{U}$ is constructed from $(U, \varepsilon, \lambda)$.

  For different choices of $R^\pprime$, there are canonical isomorphisms between the spin structures on $\restr{E}{U}$ constructed from $(R^\pprime, U, \varepsilon, \lambda)$, and these isomorphisms are compatible. Let $\tmfrakt_{(U, \varepsilon, \lambda)}$ denote that spin structure. We will construct two isomorphisms
  \[
    \restr{\mfrakt_1}{U} \cong \tmfrakt_{(U, \varepsilon, \lambda)},\ \restr{\mfrakt_2}{U} \cong \tmfrakt_{(U, \varepsilon, \lambda)}.
  \]

  Consider the case of $R^\pprime = R$. Let
  \[
    \mscrV^{\lambda_1\lambda},\ \mscrW^{\lambda_1\lambda}
  \]
  denote the vector bundles over $U$ consisting of all eigenvectors of $D^\ast D$, $D D^\ast$ whose eigenvalues lie in $[\lambda_1, \lambda]$. Take $p_1 \in \mcalP(R_1, U, \varepsilon_1, \lambda_1)$. Then we have
  \[
    p_1 + p_{\mscrW^{\lambda_1\lambda}} \in \mcalP(R_1, U, \varepsilon, \lambda),
  \]
  where $\map{p_{\mscrW^{\lambda_1\lambda}}}{\mscrW}{\mscrW^{\lambda_1\lambda}}$ denotes the $L^2$-orthogonal projection. The model of FDA for families of h$K3$ constructed from $(R_1, U, \varepsilon_1, \lambda_1)$ and $p_1 + p_{\mscrW^{\lambda_1\lambda}}$ coincides with the one obtained by taking the direct sum of the restriction to $U$ of the model of FDA for families of h$K3$ constructed from $(R_1, U, \varepsilon_1, \lambda_1)$ and $p_1$ with $\id_{V^{\lambda_1\lambda}}$. (We identified $V^{\lambda_1\lambda}$ and $W^{\lambda_1\lambda}$ through $D$.) Therefore, from \cref{thm5:stab in order}, an isomorphism is constructed between $\restr{\mfrakt_1}{U}$ and $\tmfrakt_{(U, \varepsilon, \lambda)}$. From the contractibility of $\mcalP(R_1, U, \varepsilon_1, \lambda_1)$, this isomorphism does not depend on the choice of $p_1$.

  Similarly, by considering the case of $R^\pprime = R_2$, an isomorphism between $\restr{\mfrakt_2}{U}$ and $\tmfrakt_{(U, \varepsilon, \lambda)}$ is constructed. By composing these isomorphisms, an isomorphism $\restr{\mfrakt_1}{U} \to \restr{\mfrakt_2}{U}$ is constructed.

  Let $(U^\prime, \varepsilon^\prime, \lambda^\prime)$ be another element of $\mcalB$. We prove that the isomorphisms between
  \[
    \restr{\mfrakt_1}{U \cap U^\prime},\ \restr{\mfrakt_2}{U \cap U^\prime}
  \]
  constructed from each coincide. If this is done, then from \cref{clm5:op cov}, a canonical isomorphism between $\restr{\mfrakt_1}{U_1 \cap U_2}$ and $\restr{\mfrakt_2}{U_1 \cap U_2}$ will have been constructed.

  Below, we discuss spin structures on $\restr{E}{U \cap U^\prime}$ without explicitly writing the restriction symbols. Let $\tmfrakt_{(U^\prime, \varepsilon^\prime, \lambda^\prime)}$ denote the spin structure constructed from $(U^\prime, \varepsilon^\prime, \lambda^\prime)$. From \cref{thm5:stab in order}, there is an isomorphism between $\tmfrakt_{(U, \varepsilon, \lambda)}$ and $\tmfrakt_{(U^\prime, \varepsilon^\prime, \lambda^\prime)}$. This isomorphism does not depend on the choice of $R^\pprime \in [R_1, R_2]$. Also, from the claim about compatibility in \cref{thm5:stab in order}, the three isomorphisms between
  \[
    \mfrakt_1,\ \tmfrakt_{(U, \varepsilon, \lambda)},\ \tmfrakt_{(U^\prime, \varepsilon^\prime, \lambda^\prime)}
  \]
  are compatible. Similarly, the three isomorphisms between
  \[
    \mfrakt_2,\ \tmfrakt_{(U, \varepsilon, \lambda)},\ \tmfrakt_{(U^\prime, \varepsilon^\prime, \lambda^\prime)}
  \]
  are also compatible. Therefore, the isomorphism between $\mfrakt_1$ and $\mfrakt_2$ is the same whether going through $\tmfrakt_{(U, \varepsilon, \lambda)}$ or $\tmfrakt_{(U^\prime, \varepsilon^\prime, \lambda^\prime)}$.

  Finally, the compatibility of isomorphisms between spin structures when taking a third element of $\tilde \mcalA$ follows from the compatibility stated in \cref{thm5:stab in order}.
\end{proof}

\section{Proof of the main theorems}\label{sec6:prf main thm}
In this section, we prove the main theorem stated in \cref{sec2:main thms}.

\begin{proof}[Proof of \cref{thm1:spin on H^+}]
  We show (1). Given a lift of the principal $\Diffplus$-bundle $\mcalE$ to a principal $\Diffspin$-bundle $\tmcalE \to B$, the goal is to construct a spin structure on $\hplus{\mbbX} \to B$. By specifying an element $(R, U, \varepsilon, \lambda)$ of $\tilde \mcalA$ defined in \cref{def4:proj} and an element $p$ of $\mcalP(R, U, \varepsilon, \lambda)$, a model of FDA for families of a h$K3$ on $\restr{\mbbX}{U} \to U$ is constructed by \cref{eg4:fda from sw}. By \cref{thm4:spin from fda}(1), from this model, a spin structure on
  \[
    \hplus{\restr{\mbbX}{U}} \to U
  \]
  is constructed. By \cref{thm4:indep proj}, this spin structure is determined independently of the choice of $p$. (More precisely, a canonical isomorphism can be taken between spin structures obtained from two different choices of $p$, and for three choices of $p$, these isomorphisms are compatible.)
  By gluing the spin structures constructed by the above method, we obtain a spin structure on the whole of $\restr{\hplus{\mbbX}}{U}$.

  We glue the spin structures constructed from each $(R, U, \varepsilon, \lambda)$. Let $(R, U, \varepsilon, \lambda)$ and $(R^\prime, U^\prime, \varepsilon^\prime, \lambda^\prime)$ be different elements of $\tilde \mcalA$. From each one, spin structures on
  \[
    \hplus{\restr{\mbbX}{U}},\ \hplus{\restr{\mbbX}{U^\prime}}
  \]
  are constructed. By \cref{thm5:can iso}, a canonical isomorphism is constructed between these spin structures on $U \cap U^\prime$, and for three choices of elements of $\mcalA$, that isomorphism is compatible. Therefore, the spin structures can be glued together to construct a spin structure of $\hplus{\mbbX}$ over the whole of $B$. This finishes the proof of (1).

  We prove (2). The goal is to show the functoriality of constructing a spin structure of $\hplus{\mbbX}$ from a lift of the principal $\Diffplus$-bundle $\mcalE$ to a principal $\Diffplus$-bundle. It suffices to consider the correspondence of morphisms. Assume that a morphism
  \[
    \map{(f, \Phi, \tilde \Phi)}{(B_0, \mcalE_0, \mbbX_0, g_0, \tmcalE_0)}{(B_1, \mcalE_1, \mbbX_1, g_1, \tmcalE_1)}
  \]
  of the category $\mcalC$ described in \cref{rem1:functor} is given.

  Let $\tilde \mcalA_0,\ \tilde \mcalA_1$ be $\tilde \mcalA$ of \cref{def4:proj} for
  \[
    (B_0, \mcalE_0, \mbbX_0, g_0, \tmcalE_0),\ (B_1, \mcalE_1, \mbbX_1, g_1, \tmcalE_1)
  \]
  respectively. Then for $(R, U, \varepsilon, \lambda) \in \tilde \mcalA_1$,
  \[
    (R, f^{-1}(U), \varepsilon, \lambda) \in \tilde \mcalA_0
  \]
  holds. The triple $(f, \Phi, \tilde \Phi)$ induces
  \begin{itemize}
    \item a map from the model of FDA made from $(R, f^{-1}(U), \varepsilon, \lambda)$ to the model of FDA made from $(R, U, \varepsilon, \lambda)$,
    \item a map between families of Fredholm operators constructed in \cref{prop4:t-delta fam} from the models of FDA,
    \item a map between determinant line bundles $\det (\mcalD + t\mbfh)$ of families of Fredholm operators constructed in \cref{ssec4:base not pt}.
  \end{itemize}
  Therefore, by using $(f, \Phi, \tilde \Phi)$, a $Spin(3)$-equivariant map from the spin structure on
  \[
    \hplus{\restr{\mbbX_0}{{f^{-1}(U)}}}
  \]
  to the spin structure on
  \[
    \hplus{\restr{\mbbX_1}{U}}
  \]
  is constructed. This map does not depend on the choice of $(R, U, \varepsilon, \lambda)$. It is clear from the construction that this correspondence is functorial.

  We prove (3). Assume that a lift $\tmcalE$ of $\mcalE$ to a principal $\Diffspin$-bundle is given. Then a spin structure $\mfrakt$ on $\hplus{\mbbX}$ is constructed. The goal is to show that the automorphism $\pm 1$ of $\tmcalE$ corresponds exactly to the automorphism $\pm 1$ of $\mfrakt$ through the functor constructed in (2). (The meaning of $\pm 1$ is explained in \cref{rem1:pm 1}.) Since $\mfrakt$ is constructed as the gluing of spin structures $\mfrakt_{(R, U, \varepsilon, \lambda)}$ on $\hplus{\restr{\mbbX}{U}}$ determined by specifying
  \[
    (R, U, \varepsilon, \lambda) \in \tilde \mcalA,
  \]
  it suffices to show that $\pm 1$ corresponds exactly between $\restr{\tmcalE}{U}$ and $\mfrakt_{(R, U, \varepsilon, \lambda)}$.

  The $\pm 1$ of $\restr{\tmcalE}{U}$ corresponds exactly to the $\pm 1$ of a model $\mscrF_{(R, U, \varepsilon, \lambda)}$ of FDA constructed from $(R, U, \varepsilon, \lambda)$. (see \cref{eg4:pm 1 for fda}.) By \cref{thm4:spin from fda}(3), the $\pm 1$ of $\mscrF_{(R, U, \varepsilon, \lambda)}$ corresponds exactly to the $\pm 1$ of $\mfrakt_{(R, U, \varepsilon, \lambda)}$.
\end{proof}

\begin{proof}[Proof of \cref{thm1-0:spin on TX+H^+}]
  Take an open cover $\{U_i\}$ of $\BDiffplus$ such that each
  \[
    \restr{\EDiffplus}{U_i}
  \]
  has a lift to the principal $\Diffspin$-bundle. For each $i$, fix a lift of $\restr{\EDiffplus}{U_i}$ to the principal $\Diffspin$-bundle
  \[
    \tmcalE_i \to U_i.
  \]
  By \cref{thm1:spin on H^+}(1), a spin structure on $\hplus{\restr{\Xunivplus}{U_i}}$ is constructed using $\tmcalE_i$. Denote its restriction to $U_i$ by $\mfrakt_i$. Also, $\tmcalE_i$ defines a spin structure $\mfraku_i$ on the tangent bundle along the fiber $T_{U_i} \mbbX_i$ of $\mbbX_i = \restr{\Xunivplus}{U_i}$. By taking the direct sum of $\mfraku_i$ and $\pi_{\mbbX_i}^\ast \mfrakt_i$, a spin structure
  \[
    \mfraku_i \oplus \pi_{\mbbX_i}^\ast \mfrakt_i
  \]
  on
  \[
    \restr{(T_{\BDiffplus} \Xunivplus \oplus \pi_{\mbbX_i}^\ast \hplus{\Xunivplus})}{U_i}
  \]
  is determined. We prove that for any $i$ and $j$, a canonical isomorphism is constructed between the two spin structures
  \[
    \restr{(\mfraku_i \oplus \pi_{\mbbX_i}^\ast \mfrakt_i)}{U_i \cap U_j},\
    \restr{(\mfraku_i \oplus \pi_{\mbbX_i}^\ast \mfrakt_i)}{U_i \cap U_j}
  \]
  on
  \[
    \restr{(T_{\BDiffplus} \Xunivplus \oplus \pi_{\mbbX_i}^\ast \hplus{\Xunivplus})}{U_i \cap U_j}.
  \]
  First, fix $b \in U_i \cap U_j$ and construct an isomorphism between the fibers at $b$. Fix a $\Diffspin$-equivariant map
  \[
    \map{\varphi_b^{ji}}{(\tmcalE_i)_b}{(\tmcalE_j)_b}
  \]
  that lifts the identity on $\EDiffplus_b$. The map $\varphi_b^{ji}$ induces an isomorphism
  \[
    \map{\chi_b^{ji}}{(\mfraku_i)_b}{(\mfraku_j)_b}
  \]
  of spin structures on $(T_{\BDiffplus} \Xunivplus)_b \to (\Xunivplus)_b$. Also, by \cref{thm1:spin on H^+}(2), $\varphi_b^{ji}$ induces an isomorphism
  \[
    \map{\psi_b^{ji}}{(\mfrakt_i)_b}{(\mfrakt_j)_b}
  \]
  of spin structures on $\hplus{\Xunivplus}_b \to \{b\}$. Hence, we have an isomorphism
  \[
    \map{\chi_b^{ji} \oplus \psi_b^{ji}}{(\mfraku_i \oplus \pi_{\mbbX_i}^\ast \mfrakt_i)_b}{(\mfraku_i \oplus \pi_{\mbbX_i}^\ast \mfrakt_i)_b}.
  \]
  By \cref{thm1:spin on H^+}(3), this is independent of the choice of $\varphi_b^{ji}$. Therefore, a map
  \[
    \map{\chi^{ji} \oplus \psi^{ji}}{\restr{(\mfraku_i \oplus \pi_{\mbbX_i}^\ast \mfrakt_i)}{U_i \cap U_j}}{\restr{(\mfraku_i \oplus \pi_{\mbbX_i}^\ast \mfrakt_i)}{U_i \cap U_j}}
  \]
  giving an isomorphism of spin structures for each fiber is canonically constructed. By taking a sufficiently small open neighborhood $V$ of each $b \in U_i \cap U_j$, $\varphi^{ji}$ can be taken continuously on $V$, so this is continuous. Also, by \cref{thm1:spin on H^+}(3),
  \[
    \{\chi^{ji} \oplus \psi^{ji}\}
  \]
  satisfies the cocycle condition. Therefore, these spin structures are glued together to give a spin structure on
  \[
    T_{\BDiffplus} \Xunivplus \oplus \pi_{\Xunivplus}^\ast \hplus{\Xunivplus}.
  \]
  This finishes the proof.
\end{proof}

\begin{proof}[Proof of \cref{thm1:alpha = w_2}]
  By \cref{thm1-0:spin on TX+H^+}, we have
  \[
    w_2(T_{\BDiffplus} \Xunivplus) = \pi_{\Xunivplus}^\ast w_2(\hplus{\Xunivplus}).
  \]
  This is equivalent to
  \[
    \alpha(\Xunivplus, \mfraks) = w_2(\hplus{\Xunivplus})
  \]
  by \cref{rem1:char of alpha}.
\end{proof}

\begin{remark}\label{rem6:gerbe}
  We recapture the proof of \cref{thm1:spin on TX+H^+} using the concept of $O(1)$-gerbes. The goal is to show that \cref{thm1:spin on TX+H^+} can be formulated as the existence of a canonical isomorphism between two $O(1)$-gerbes. Here, we use the formulation of gerbes by Hitchin\cite{Hitchin-Lag-submfd-2001} and Chatterjee\cite{Chatterjee-gerbe}. An $O(1)$-gerbe on a topological space $B$ consists of
  \begin{itemize}
    \item an open cover $\{U_i\}_{i \in I}$ of $B$,
    \item a principal $O(1)$-bundle $P_{ij}$ on $U_{ij} = U_i \cap U_j$,
    \item a section $s_{ijk}$ of $(\delta P)_{ijk} = P_{jk} \otimes P_{ik}^{-1} \otimes P_{ij}$ on $U_{ijk} = U_i \cap U_j \cap U_k$
  \end{itemize}
  satisfying
  \[
    (\delta s)_{ijkl} = s_{jkl} \otimes s_{ikl}^{-1} \otimes s_{ijl} \otimes s_{ijk}^{-1} = 1
  \]
  on $U_{ijkl} = U_i \cap U_j \cap U_k \cap U_l$. For example, given a central extension
  \[
    1 \to \Z/2 \to \tilde G \to G \to 1
  \]
  of topological groups and a principal $G$-bundle $R \to B$, an $O(1)$-gerbe can be constructed as follows: for simplicity, assume that $B$ has a good cover $\{U_i\}$ and that $R$ has a lift to a principal $\tilde G$-bundle over each $U_i$. For each $U_i$, fix a lift $\tilde R_i$ of $\restr{R}{U_i}$ to a principal $\tilde G$-bundle. Define a principal $O(1)$-bundle $P_{ij}$ by
  \[
    P_{ij} = \coprod_{b \in U_{ij}} \Iso_{\tilde G}(\tilde R_{i, b}, \tilde R_{j, b}).
  \]
  Furthermore, fix $u_{ij} \in \Gamma(P_{ij})$ and define $s_{ijk} \in \Gamma(P_{ijk})$ by
  \[
    s_{ijk} = u_{jk} \otimes u_{ik}^{-1} \otimes u_{ij}.
  \]
  These data define an $O(1)$-gerbe.

  In the above proof, an $O(1)$-gerbe
  \[
    \mcalG_{\mcalE} = (\{U_i\}, \{P_{ij}\}, \{s_{ijk}\})
  \]
  is constructed from the central extension
  \[
    1 \to \Z/2 \to \Diffspin \to \Diffplus \to 1
  \]
  and the principal $\Diffplus$-bundle
  \[
    \EDiffplus \to \BDiffplus.
  \]
  Similarly, we can construct another $O(1)$-gerbe $\mcalG_{\hplus{\mbbX}}$ for $\hplus{\mbbX}$. However, using the auxiliary data employed in the construction of $\mcalG_\mcalE$, a special construction can be chosen. \cref{thm1:spin on H^+}(1) stated that a spin structure on $\hplus{\restr{\Xunivplus}{U_i}}$ can be canonically constructed from a lift of $\restr{\EDiffplus}{U_i}$ to a principal $\Diffspin$-bundle. Also, the functoriality mentioned in \cref{thm1:spin on H^+}(2) implies that a map between two lifts of $\restr{\EDiffplus}{U_{ij}}$ to principal $\Diffspin$-bundles induces a map between the two spin structures constructed on $\hplus{\restr{\Xunivplus}{U_{ij}}}$. The $O(1)$-gerbe $\mcalG_{\hplus{\mbbX}}$ is constructed from these data.

  For each $i$ and $j$, define a map $P_{ij} \to Q_{ij}$ using the functoriality in \cref{thm1:spin on H^+}(2). \cref{thm1:spin on H^+}(3) asserts that this map is an isomorphism of principal $O(1)$-bundles. Furthermore, $s_{ijk}$ and $t_{ijk}$ correspond exactly. This gives a canonical isomorphism between the two $O(1)$-gerbes $\mcalG_{\mcalE}$ and $\mcalG_{\hplus{\mbbX}}$.
\end{remark}

\section{Appendix: Overview of the Witten deformation}\label{sec6:Witten deformation}

In this section, we give an overview of Witten deformation. The results of this section are used in the proofs of \cref{prop5:ker pssho} and \cref{prop5:can iso for vari spin}(2)(3). We refer to Furuta\cite{Furuta2007Index-Theorem} and Miyazawa\cite{miyazawa2021localization} as references.

In this paper, we frequently use the Witten deformation, especially in the proof of \cref{prop5:can iso for vari spin}(2)(3). It is used in the following way. In \cref{prop5:can iso for vari spin}(2)(3), we carry out an argument corresponding to the proof of the excision theorem for the index of a family. Therefore, we need a result corresponding to the fact that the families index of Dirac operators constructed on a family of manifolds coincides with the families index of Dirac operators obtained by restricting it to a family on open subsets. (In fact, we need a slightly stronger result than a mere coincidence of the index of the family.) In this paper, we apply the Witten deformation to a family of Dirac-type operators. However, in this section, we explain the Witten deformation for the case where the base space is a single point. The generalization to the case of a family is straightforward.

First, let us state the setup. Let us consider a tuple which consists of
\begin{itemize}
  \item a Riemannian manifold $M$,
  \item a vector bundle $S$ over $M$ with a Hermitian metric,
  \item a Clifford action $c$ on $S$ by $TM$,
  \item a Dirac-type operator $D$ on $S$,
  \item a section $h \in \Gamma(\Herm(S))$,
  \item a closed subset $F$ of $M$,
  \item positive real numbers $\bar \lambda$ and $T$.
\end{itemize}
Here we call $D$ a Dirac-type operator if it is a formally self-adjoint first-order differential operator on $S$, and its symbol is $ic$. In the following definition, we impose conditions on this tuple so that it fits the Witten deformation. As a preparation for this, we introduce some notation. First, we define the $L^2_1$-norm and $L^2_2$-norm on $\Gamma_c(S)$ by
\begin{align*}
  \norm{\phi}_{L^2_1}^2 & = \norm{D \phi}_{L^2}^2 + \norm{\phi}_{L^2}^2,  \\
  \norm{\phi}_{L^2_2}^2 & = \norm{D^2 \phi}_{L^2}^2 + \norm{\phi}_{L^2}^2
\end{align*}
and denote by $L^2_1(S)$ and $L^2_2(S)$ the completions with respect to these norms. Next, for $\lambda > 0$ and $t > 0$, we denote by
\[
  E_\lambda((D + th)^2)
\]
the vector space consisting of all eigenspinors of $(D + th)^2$ with eigenvalue $\lambda$ in the weak sense in $L^2(S)$. Also, we set
\[
  E_{\leq \lambda}((D + th)^2) = \bigoplus_{\mu \in [0, \lambda]} E_\mu((D + th)^2).
\]

\begin{definition}\label{def6:tame tuple}
  We say that a tuple $(M, S, c, D, h, F, \bar \lambda, T)$ is tame if it satisfies the following conditions:
  \begin{enumarabicp}
    \item $c$ is $L^\infty$-bounded.
    \item The zeroth-order operator $\{D, h\} = Dh + hD$ is $L^\infty$-bounded.
    \item At each point of $M \setminus F$, $h$ is an isomorphism, and $h^{-1}$ is $L^\infty$-bounded on $M \setminus F$.
    \item For any $t \geq T$,
    \[
      E_{\leq \bar \lambda}((D + th)^2)
    \]
    is finite-dimensional and contained in $L^2_2(S)$. Also, if we denote by
    \[
      E_{\leq \bar \lambda}((D + th)^2)^\perp
    \]
    the orthogonal complement of $E_{\leq \bar \lambda}((D + th)^2)$ with respect to the $L^2$-norm, then for any $\psi \in E_{\leq \bar \lambda}((D + th)^2)^\perp \cap L^2_2(S)$,
    \[
      \norm{(D + th) \psi}_{L^2}^2 \geq \bar \lambda \norm{\psi}_{L^2}^2
    \]
    holds. Moreover, $E_{\leq \bar \lambda}((D + th)^2)$ and $E_{\leq \bar \lambda}((D + th)^2)^\perp \cap L^2_2(S)$ are orthogonal with respect to the quadratic form
    \[
      \phi \mapsto \norm{(D + th) \phi}_{L^2}^2.
    \]
    \item For $t \geq T$,
    \[
      t(hD + Dh) + t^2 h^2 \geq \bar \lambda
    \]
    holds on $M \setminus F$.
    \item Let $\lambda \in (0,\bar \lambda)$ be arbitrary. If there exists a finite-dimensional subspace $E$ of $L^2_2(S)$ such that
    \begin{align*}
      \norm{(D + th) \phi}_{L^2}^2 \leq \lambda \norm{\phi}_{L^2}^2, &  &  & \phi \in E
    \end{align*}
    holds, then
    \[
      E_{\leq \lambda}((D + th)^2) \geq \dim E.
    \]
    Moreover, the largest eigenvalue of $(D + th)^2$ below $\lambda$ is at most
    \[
      \max_{\phi \in E,\ \norm{\phi}_{L^2} = 1} \norm{(D + th) \phi}_{L^2}^2.
    \]
  \end{enumarabicp}
\end{definition}

We state the proposition that is the goal of this section. First, let us describe the setup.
Let
\[
  (M, S, c, D, h, F, \bar \lambda, T),\ (M^\prime, S^\prime, c^\prime, D^\prime, h^\prime, F^\prime, \bar \lambda^\prime, T^\prime)
\]
be two tame tuples. Take an open subset $U$ of $M$ containing $F$ and an open subset $U^\prime$ of $M^\prime$ containing $F^\prime$. Assume that there are given a diffeomorphism
\[
  \map{\varphi}{U}{U^\prime}
\]
preserving the Riemannian metric and a metric-preserving isomorphism of vector bundles
\[
  \map{\tilde \varphi}{\restr{S}{U}}{\restr{S^\prime}{U^\prime}}
\]
covering $\varphi$ such that
\[
  \tilde \varphi_\ast c = c^\prime,\ \tilde \varphi_\ast D = D^\prime,\ \tilde \varphi_\ast h = h^\prime,\ \varphi^{-1}(F^\prime) = F.
\]
When a section of $S$ has support in $U$, we can identify it with a section of $S^\prime$ with support in $U^\prime$ through $\varphi$ and $\tilde \varphi$. In the following, we use this identification without explicit mention.

Take a smooth function
\[
  \map{\rho}{M}{[0, 1]}
\]
that is $L^\infty_1$-bounded, identically 1 on a neighborhood of $F$, and has support contained in $U$. For a positive real number $\lambda < \bar \lambda$ and $t \geq T$, define
\[
  \map{\Phi_{\lambda,t}}{E_{\leq \lambda}((D + th)^2)}{E_{\leq \lambda}((D^\prime + th^\prime)^2)}
\]
by
\[
  \Phi_{\lambda, t} (\phi) = \Pi_{\lambda, t} \rho \phi.
\]
Here, $\Pi_{\lambda, t}$ denotes the orthogonal projection from $L^2(S^\prime)$ to $E_{\leq \lambda}((D^\prime + th^\prime)^2)$.

\begin{proposition}\label{prop6:Witten deformation}
  In the above setup, assume that a positive real number $\lambda < \min\{\bar \lambda, \bar \lambda^\prime\}$ satisfies the following property: for any $t \geq T$,
  \[
    \lambda \notin \sigma((D + th)^2)
  \]
  holds, and furthermore,
  \[
    \sigma((D + th)^2) \cap [0, \lambda]
  \]
  is independent of $t \geq T$. Then, there exists $T_0 \geq \max\{T, T^\prime\}$ such that for any $t \geq T_0$,
  \[
    \lambda \notin \sigma((D^\prime + th^\prime)^2)
  \]
  holds, and furthermore, $\Phi_{\lambda, t}$ is an isomorphism.
\end{proposition}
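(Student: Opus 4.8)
The plan is to treat \cref{prop6:Witten deformation} as a standard Witten‑localization statement in which essentially all of the analytic content has been packaged into the tameness conditions of \cref{def6:tame tuple}; the proof then amounts to combining those conditions with a concentration (Agmon/IMS‑type) estimate. The first step I would carry out is such an estimate: there is a constant $C$, depending only on the geometry, on $\lambda$ and on $N := \dim E_{\leq\lambda}((D+th)^2)$ but not on $t$, so that for every neighbourhood $W$ of $F$ and every $\phi \in E_{\leq\lambda}((D+th)^2)$ one has $\norm{\phi}_{L^2(M\setminus W)}^2 \leq C t^{-2}\norm{\phi}_{L^2}^2$ for all large $t$, and symmetrically for $F'$ and $(D'+th')^2$ (here one uses $\lambda < \min\{\bar\lambda,\bar\lambda'\}$ and \cref{def6:tame tuple}(4) to stay in the discrete range). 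This is proved as in Furuta\cite{Furuta2007Index-Theorem}: take a cutoff $\chi$ equal to $1$ on $M\setminus W$ and vanishing on a smaller neighbourhood of $F$; the IMS localization identity $\norm{(D+th)\chi\phi}_{L^2}^2 = \langle (D+th)^2\phi,\chi^2\phi\rangle + \norm{c(d\chi)\phi}_{L^2}^2$ bounds the right side by $\lambda\norm{\chi\phi}^2 + C_\chi\norm{\phi}_{L^2(\mathrm{supp}\,d\chi)}^2$ via \cref{def6:tame tuple}(1), while \cref{def6:tame tuple}(2),(3) bound the left side from below by $(t^2 C_0^{-2} - t C_1)\norm{\chi\phi}^2$, with $C_0$ the $L^\infty$‑bound on $h^{-1}$ off $F$ and $C_1$ the bound on $\{D,h\}$; the resulting $t^2$‑gain yields the decay. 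Since $\sigma((D+th)^2)\cap[0,\lambda]$ is $t$‑independent and misses $\lambda$, the integer $N$ is independent of $t\geq T$ (continuity of the spectrum), and $\mu_0 := \max\bigl(\sigma((D+th)^2)\cap[0,\lambda)\bigr) < \lambda$.

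Next I would show that $\rho\phi$ is an approximate low eigenspinor for the primed operator. Through $\varphi,\tilde\varphi$ — legitimate because $\rho$ is supported in $U$ — one has $(D'+th')(\rho\phi) = \rho\,(D+th)\phi + c'(d\rho)\phi$; the first term has $L^2$‑norm at most $\sqrt{\mu_0}\,\norm{\phi}$, and since $\mathrm{supp}\,d\rho$ is disjoint from a neighbourhood of $F$, the concentration estimate gives $\norm{c'(d\rho)\phi}_{L^2} = O(t^{-1})\norm{\phi}$ and $\norm{\rho\phi-\phi}_{L^2} = O(t^{-1})\norm{\phi}$. Hence, for $t$ large, $\phi\mapsto\rho\phi$ is injective on $E_{\leq\lambda}((D+th)^2)$ and its image is an $N$‑dimensional subspace of $L^2_2(S')$ on which $\norm{(D'+th')\,\cdot\,}^2 \leq (\mu_0 + o(1))\norm{\cdot}^2 < \lambda\norm{\cdot}^2$. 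Applying \cref{def6:tame tuple}(6) for the primed tuple gives $\dim E_{\leq\lambda}((D'+th')^2) \geq N$ and the bound that every eigenvalue of $(D'+th')^2$ below $\lambda$ is at most $\mu_0 + o(1)$; since the primed spectrum is discrete below $\bar\lambda' > \lambda$, this already establishes the first assertion, $\lambda\notin\sigma((D'+th')^2)$ for all large $t$. Running the symmetric argument — using concentration of $E_{\leq\lambda}((D'+th')^2)$‑spinors near $F'$, the fact just proved that their eigenvalues are $\leq\mu_0+o(1)<\lambda$, and \cref{def6:tame tuple}(6) for the unprimed tuple applied to the test space $\rho\cdot E_{\leq\lambda}((D'+th')^2)$ — yields $\dim E_{\leq\lambda}((D'+th')^2) \leq N$, so equality holds.

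Finally, $\Phi_{\lambda,t}$ is injective for large $t$: if $\Phi_{\lambda,t}\phi = 0$ for a unit $\phi$, then $\rho\phi$ lies in the spectral subspace of $(D'+th')^2$ for eigenvalues $>\lambda$, so $\norm{(D'+th')\rho\phi}^2 \geq \lambda\norm{\rho\phi}^2 = \lambda(1-O(t^{-1}))$, contradicting $\norm{(D'+th')\rho\phi}^2 \leq \mu_0 + O(t^{-1})$ once $t$ is large, as $\mu_0 < \lambda$. An injective map between spaces of equal finite dimension $N$ is an isomorphism, so it suffices to take $T_0\geq\max\{T,T'\}$ large enough that all the finitely many "$t$ large" requirements above hold at once. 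The step I expect to need the most care is uniformity: the subspace $E_{\leq\lambda}((D+th)^2)$ itself moves with $t$, so the concentration constant and the constants extracted from \cref{def6:tame tuple}(6) must be shown independent of $t$ and of the chosen eigenspinor, and one must verify that the two‑sided dimension count closes — in particular that feeding \cref{def6:tame tuple}(6) for the unprimed tuple relies only on the already‑proved fact $\lambda\notin\sigma((D'+th')^2)$ and on no further stability of the primed spectrum. The bookkeeping with $\varphi,\tilde\varphi$ near $\partial U$ is routine since every section involved is cut off by $\rho$.
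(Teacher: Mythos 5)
Your proposal is correct in substance, but it takes a noticeably more direct route than the paper. The paper does not prove \cref{prop6:Witten deformation} directly: it first establishes \cref{prop6:distance is small}, a quantitative statement bounding $d_{L^2(S')}\bigl(\rho\cdot E_{\leq\lambda}((D+th)^2),\,E_{\leq\lambda}((D'+th'){}^2)\bigr)$ in the Grassmannian metric, via \cref{lem6:distance is small}, which is itself driven by the localization estimates \cref{lem6:L^2 localization}, \cref{lem6:D+th localization} and the abstract Grassmannian-continuity lemma \cref{lem6:Hilb distance}; \cref{prop6:Witten deformation} is then extracted as an immediate corollary. Your argument replaces the Grassmannian-distance machinery by a direct two-sided min--max count: use the Agmon/IMS concentration estimate to show that $\rho\phi$ is an approximate low eigenspinor, feed it through \cref{def6:tame tuple}(6) in both directions to pin down $\dim E_{\leq\lambda}((D'+th'){}^2)$ and to establish $\lambda\notin\sigma((D'+th'){}^2)$, and then obtain surjectivity of $\Phi_{\lambda,t}$ from injectivity plus dimension equality, with injectivity coming from the spectral gap contradiction. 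What you gain is economy: you never need the explicit distance bound. What you lose is precisely that quantitative bound, which the paper goes to the trouble of proving because it is needed later (in the proof of \cref{prop4:can iso for vari spin}(2)--(3), where the isomorphisms $\det\Phi_{\lambda,e,u}$ constructed on different local patches must be shown ``almost equal'' so that a partition-of-unity gluing yields a global isomorphism). So if you were to use your shorter argument as a drop-in replacement, you would still have to supply \cref{prop6:distance is small} separately.

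Two small points to tighten. First, the concentration rate you claim, $\norm{\phi}_{L^2(M\setminus W)}^2\leq Ct^{-2}\norm{\phi}_{L^2}^2$, is stronger than what the tameness hypotheses give: the cross term $\{D,h\}$ is only $L^\infty$-bounded, so as in the paper's \cref{lem6:L^2 localization} one obtains $A(t)^2 = O(t^{-1})$, not $O(t^{-2})$; this does not affect the rest of your argument since only $A(t)\to 0$ is used. Second, your sentence inferring $\lambda\notin\sigma((D'+th'){}^2)$ from ``the eigenvalues below $\lambda$ are at most $\mu_0+o(1)$, and the primed spectrum is discrete'' is a bit slippery as written: discreteness alone does not exclude $\lambda$ from the spectrum. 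What actually does the work is the ``Moreover'' clause of \cref{def6:tame tuple}(6), read the way the paper uses it (the largest element of $\sigma((D'+th'){}^2)\cap[0,\lambda]$ is bounded by $\max_{\phi\in\rho E_{\leq\lambda},\,\norm{\phi}=1}\norm{(D'+th')\phi}^2\approx\mu_0<\lambda$); this strict inequality is what pushes $\lambda$ out of the spectrum. Making that dependence explicit would close the logical loop cleanly.
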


\begin{remark}
  In \cref{prop6:Witten deformation}, it is not necessary to impose a condition on the spectrum for $(D^\prime + th^\prime)^2$. That is, the assumption is asymmetric for $(D + th)^2$ and $(D^\prime + th^\prime)^2$.
\end{remark}

In the proof of \cref{prop5:can iso for vari spin}, a more precise statement is needed. To prepare for that assertion, we define a distance on the Grassmannian of a Hilbert space.

\begin{definition}
  Let $H$ be a Hilbert space. For a positive integer $r$, let $\Gr(r, H)$ denote the set of all $r$-dimensional subspaces of $H$. We define a distance $d_H$ on $\Gr(r, H)$ as follows: For $E, E^\prime \in \Gr(r, H)$, $d_H(E, E^\prime)$ is the infimum of $d > 0$ such that the followings hold:
  \begin{itemize}
    \item For any orthonormal basis $e_1,\ \cdots,\ e_r$ of $E$, there exists an orthonormal basis $e^\prime_1,\ \cdots,\ e^\prime_r$ of $E^\prime$ satisfying
          \[
            \sqrt{\sum_{i = 1}^r \norm{e_i - e^\prime_i}^2} \leq d.
          \]
    \item For any orthonormal basis $e^\prime_1,\ \cdots,\ e^\prime_r$ of $E^\prime$, there exists an orthonormal basis $e_1,\ \cdots,\ e_r$ of $E$ satisfying
          \[
            \sqrt{\sum_{i = 1}^r \norm{e_i - e^\prime_i}^2} \leq d.
          \]
  \end{itemize}
\end{definition}

\begin{proposition}\label{prop6:distance is small}
  Let two tame tuples
  \[
    (M, S, c, D, h, F, \bar \lambda, T),\ (M^\prime, S^\prime, c^\prime, D^\prime, h^\prime, F^\prime, \bar \lambda^\prime, T^\prime)
  \]
  and
  \[
    \map{\varphi}{U}{U^\prime},\ \map{\tilde \varphi}{\restr{S}{U}}{\restr{S^\prime}{U^\prime}},\ \map{\rho}{M}{[0, 1]},\ \lambda
  \]
  satisfy the assumptions of \cref{prop6:Witten deformation}. Then, for any $\varepsilon > 0$, there exists $T_0 \geq \max\{T, T^\prime\}$ such that for $t \geq T_0$,
  \[
    d_{L^2(S^\prime)}(\rho \cdot E_{\leq \lambda}((D + th)^2), E_{\leq \lambda}((D^\prime + th^\prime)^2)) < \varepsilon.
  \]
\end{proposition}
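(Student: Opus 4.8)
The final statement to prove is \cref{prop6:distance is small}, which is the quantitative refinement of \cref{prop6:Witten deformation}: not only is $\Phi_{\lambda,t}$ eventually an isomorphism, but the subspace $\rho \cdot E_{\leq\lambda}((D+th)^2)$ becomes arbitrarily close, in the Grassmannian metric $d_{L^2(S')}$, to $E_{\leq\lambda}((D'+th')^2)$ as $t\to\infty$.

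\medskip

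\textbf{Approach.} The plan is to reuse the machinery already built for \cref{prop6:Witten deformation} and extract from it an explicit decay estimate. The key observations are: (i) for $\phi \in E_{\leq\lambda}((D+th)^2)$, the cutoff spinor $\rho\phi$ is concentrated near $F$ because $\rho$ is supported in $U$ and constantly $1$ near $F$, while tameness condition (5) forces $\phi$ to decay on $M\setminus F$ at a rate controlled by $t$; (ii) applying $(D'+th')$ to $\rho\phi$ (identified with a spinor on $M'$ via $\tilde\varphi$) produces, by the Leibniz rule, a term $\rho(D+th)\phi$ (of $L^2$-norm $\leq \sqrt\lambda\,\|\phi\|$) plus a commutator term $c(d\rho)\phi$ supported in the region where $d\rho \neq 0$, which is a compact subset of $U\setminus F$; on that region $\phi$ is exponentially small in $t$. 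Hence $\rho\phi$ is, up to an error $o_t(1)\|\phi\|$, an ``approximate eigenspinor'' of $(D'+th')^2$ with eigenvalue $\leq\lambda$. Since $\lambda\notin\sigma((D'+th')^2)$ for large $t$ (by \cref{prop6:Witten deformation}), the spectral gap of $(D'+th')^2$ around $\lambda$ lets us conclude that $\rho\phi$ is $o_t(1)\|\phi\|$-close to $E_{\leq\lambda}((D'+th')^2)$, i.e. $\|(1-\Pi_{\lambda,t})\rho\phi\| = o_t(1)\|\phi\|$.

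\medskip

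\textbf{Key steps, in order.} First I would record the norm-equivalence: for $\phi\in E_{\leq\lambda}((D+th)^2)$, both $\|\rho\phi\|_{L^2}$ and $\|\phi\|_{L^2}$ are comparable up to $1+o_t(1)$, using the $t$-dependent decay of $\phi$ off $F$ coming from tameness (5) (the quadratic form $t(hD+Dh)+t^2h^2 \geq \bar\lambda$ on $M\setminus F$, combined with the eigenvalue bound, gives $\|\phi\|_{L^2(M\setminus F')} \to 0$ like $1/t$ or faster for any neighborhood $F'$ of $F$; this is essentially already inside the proof of \cref{prop6:Witten deformation}). Second, estimate $\|(D'+th')\rho\phi\|_{L^2(S')}$ by splitting via the Leibniz rule into the ``principal'' piece and the commutator piece $c(d\rho)\phi$; bound the latter by $\|d\rho\|_\infty \|\phi\|_{L^2(\operatorname{supp} d\rho)}$, which is $o_t(1)\|\phi\|$ since $\operatorname{supp} d\rho \Subset U\setminus F$. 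Third, conclude $\|(1-\Pi_{\lambda,t})\rho\phi\|_{L^2} \leq C(\lambda,\bar\lambda')\,\big(\|(D'+th')\rho\phi\|^2 - \lambda\|\Pi_{\lambda,t}\rho\phi\|^2\big)^{1/2} + o_t(1)\|\phi\| = o_t(1)\|\phi\|$, using the spectral gap above $\lambda$. Fourth, run the symmetric argument: take $\psi\in E_{\leq\lambda}((D'+th')^2)$, pull it back by $\tilde\varphi^{-1}$ on $U'$, multiply by $\rho$ (now thought of as $\varphi^*$ of a cutoff downstairs, or simply note $\psi$ is itself concentrated near $F'$ by tameness (5) for the primed tuple), and show it is $o_t(1)\|\psi\|$-close to $\rho\cdot E_{\leq\lambda}((D+th)^2)$ — here one also uses that $\Phi_{\lambda,t}$ is already known to be an isomorphism, so a dimension count lets the two one-sided estimates combine. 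Fifth, translate the two one-sided $L^2$-closeness statements into the statement $d_{L^2(S')}(\rho\cdot E_{\leq\lambda}((D+th)^2), E_{\leq\lambda}((D'+th')^2)) < \varepsilon$ by the standard fact that if two $r$-dimensional subspaces of a Hilbert space each lie within Hausdorff distance $\delta < 1$ of the other (on unit spheres), then a suitable orthonormal basis matching exists with $\ell^2$-error $O(r\delta)$; choose $T_0$ so this is below $\varepsilon$.

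\medskip

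\textbf{Main obstacle.} The delicate point is making the ``$\phi$ is exponentially (or polynomially) small on $\operatorname{supp} d\rho$'' claim quantitative and uniform, and then correctly passing from an approximate-eigenspinor bound to Grassmannian-distance smallness with the right power of the gap. Tameness condition (5) only gives a \emph{lower} bound $t(hD+Dh)+t^2h^2 \geq \bar\lambda$ on $M\setminus F$, which by itself yields $\|\phi\|_{L^2(M\setminus F)}^2 \leq \bar\lambda^{-1}\langle (t(hD+Dh)+t^2h^2)\phi,\phi\rangle$; one must bound the right-hand side using $(D+th)^2\phi$ having eigenvalue $\leq\lambda$ and the identity $(D+th)^2 = D^2 + t(Dh+hD) + t^2h^2$, which forces $\langle(t(Dh+hD)+t^2h^2)\phi,\phi\rangle = \langle((D+th)^2 - D^2)\phi,\phi\rangle \leq \lambda\|\phi\|^2$; this gives $\|\phi\|_{L^2(M\setminus F)}^2 \leq (\lambda/\bar\lambda)\|\phi\|^2$, which is a \emph{bound} but not yet \emph{decay}. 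To get genuine decay one localizes $F$: replacing $F$ by a slightly larger closed set $F_\delta$ (a $\delta$-neighborhood) and shrinking $\rho$'s support accordingly, on $\operatorname{supp} d\rho \subseteq M\setminus F_\delta$ the same computation with the improved lower bound $t(hD+Dh)+t^2h^2 \geq t^2\mu_\delta^2 - Ct$ (where $\mu_\delta = \inf_{M\setminus F_\delta}\|h\|$, using $h$ invertible there and $\{D,h\}$ bounded) shows the mass there is $O(\lambda/(t^2\mu_\delta^2))$, which $\to 0$. Getting the interplay of these parameters ($\delta$ fixed first, then $t$ large) right, and verifying one can do it all $\Z/4$-equivariantly and uniformly over the $S(E)$-family (as needed in \cref{prop4:can iso for vari spin}(2)(3)), is where the real care is required; the functional-analytic conclusion step (gap estimate $\Rightarrow$ projection estimate $\Rightarrow$ Grassmannian distance) is standard once the concentration estimate is in hand.
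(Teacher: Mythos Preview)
Your overall plan is sound and close to the paper's, but two points deserve comment.

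\textbf{The decay estimate is simpler than you fear.} Your ``main obstacle'' paragraph identifies a real issue with the bound $\|\phi\|_{L^2(M\setminus F)}^2 \leq (\lambda/\bar\lambda)\|\phi\|^2$ derived from tameness (5), and your $F_\delta$-neighborhood fix works. But the paper's route (its \cref{lem6:L^2 localization}) is more direct: rather than using (5), use tameness (3) --- $h^{-1}$ is $L^\infty$-bounded on \emph{all} of $M\setminus F$ --- together with boundedness of $\{D,h\}$. From $\|(D+th)\phi\|^2 = \|D\phi\|^2 + t\langle\{D,h\}\phi,\phi\rangle + t^2\|h\phi\|^2 \leq \bar\lambda\|\phi\|^2$ one extracts
\[
  t^2\|h\phi\|_{L^2(M\setminus F)}^2 \leq (\bar\lambda + 2t\|\{D,h\}\|_{L^\infty})\|\phi\|^2,
\]
and then $\|\phi\|_{L^2(M\setminus F)} \leq \|h^{-1}\|_{L^\infty(M\setminus F)}\|h\phi\|_{L^2(M\setminus F)}$ gives $\|\phi\|_{L^2(M\setminus F)}^2 = O(1/t)\|\phi\|^2$. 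No neighborhood-shrinking is needed.

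\textbf{The final step and a circularity hazard.} Your step (iv) invokes ``$\Phi_{\lambda,t}$ is already known to be an isomorphism'' to match dimensions. In the paper's logical order, \cref{prop6:Witten deformation} is \emph{deduced from} \cref{prop6:distance is small}, so you cannot cite it here. The paper instead proves the dimension equality $\dim E_{\leq\lambda}((D+th)^2) = \dim E_{\leq\lambda}((D'+th')^2)$ directly inside \cref{lem6:distance is small}, by showing (via your commutator estimate, packaged as \cref{lem6:D+th localization}) that the eigenvalues of the quadratic form $\|(D'+th')(\cdot)\|^2$ restricted to $\rho\cdot E_{\leq\lambda}((D+th)^2)$ are close to those of $(D+th)^2$, then applying the min-max property in tameness (6) in both directions. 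Once dimensions match, the paper converts ``eigenvalues nearly coincide'' into ``subspaces are close'' via an abstract Grassmannian lemma (\cref{lem6:Hilb distance}), rather than your direct spectral-gap projection estimate. Both endgames work; the paper's eigenvalue-matching route handles the asymmetry in the hypotheses (the spectral assumption is only on the unprimed side) a bit more cleanly.
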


\cref{prop6:Witten deformation} follows immidiately from \cref{prop6:distance is small}.

We now proceed to the proof of \cref{prop6:distance is small}. The following lemma is the key point.

\begin{lemma}\label{lem6:distance is small}
  Let two tame tuples
  \[
    (M, S, c, D, h, F, \bar \lambda, T),\ (M^\prime, S^\prime, c^\prime, D^\prime, h^\prime, F^\prime, \bar \lambda^\prime, T^\prime)
  \]
  and
  \[
    \map{\varphi}{U}{U^\prime},\ \map{\tilde \varphi}{\restr{S}{U}}{\restr{S^\prime}{U^\prime}},\ \map{\rho}{M}{[0, 1]},\ \lambda
  \]
  satisfy the assumptions of \cref{prop6:Witten deformation}. Then, for any $\varepsilon > 0$, there exists $T_1 \geq \max\{T, T^\prime\}$ such that for any $t \geq T_1$,
  \[
    \dim E_{\leq \lambda}((D + th)^2) = \dim E_{\leq \lambda}((D^\prime + th^\prime)^2)
  \]
  and
  \begin{align*}
    d_{L^2(S)}(E_{\leq \lambda}((D + th)^2), \rho \cdot E_{\leq \lambda}(D + th)^2)                                    & < \varepsilon, \\
    d_{L^2(S^\prime)}(E_{\leq \lambda}((D^\prime + th^\prime)^2), \rho \cdot E_{\leq \lambda}(D^\prime + th^\prime)^2) & < \varepsilon, \\
    d_{L^2(S)}(E_{\leq \lambda}((D + th)^2), \rho \cdot E_{\leq \lambda}((D^\prime + th^\prime)^2))                    & < \varepsilon
  \end{align*}
\end{lemma}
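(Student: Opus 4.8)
Lemma 6.9 (the statement labelled \cref{lem6:distance is small}) is a uniform spectral-gap comparison for the two Witten-deformed families; the plan is to prove it by a standard "push-and-project" estimate, exploiting the fact that the cutoff $\rho$ is supported in the region $U$ where the two tuples are isometrically identified, together with the tameness conditions (5) and (6) of \cref{def6:tame tuple}.

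First I would establish the \emph{localization} estimate: there are constants $C$ and $\delta(t)$ with $\delta(t) \to 0$ as $t \to \infty$ such that every $\phi \in E_{\le \lambda}((D+th)^2)$ satisfies $\norm{\phi}_{L^2(M \setminus U)} \le \delta(t)\norm{\phi}_{L^2}$, and likewise on the primed side. This is where tameness (5), $t(hD+Dh)+t^2h^2 \ge \bar\lambda$ on $M \setminus F$, does the work: testing the eigenvalue equation against $\chi \phi$ for a suitable cutoff $\chi$ that is $1$ off $U$ and $0$ near $F$, integrating by parts, and using that $\lambda < \bar\lambda$ forces the $L^2$-mass of $\phi$ away from $F$ (hence, shrinking the cutoff region, away from $U$) to decay like $1/t$; the $L^\infty$-bounds in (1)--(3) control the commutator error terms from $[D,\chi]$. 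Second, using this, $\phi \mapsto \rho\phi$ is, for large $t$, an almost-isometric almost-inclusion of $E_{\le\lambda}((D+th)^2)$ into $L^2_2(S)$: $\norm{\rho\phi - \phi}_{L^2} \le \delta(t)\norm\phi$, and $(D+th)(\rho\phi) = \rho(D+th)\phi + [D,\rho]\phi$, whose second term is again $O(\delta(t))$ in $L^2$ because $[D,\rho]$ is supported where $\rho$ is non-constant, i.e.\ inside $U$ but away from $F$, where $\phi$ is small. Hence $\norm{(D+th)(\rho\phi)}_{L^2} \le (\lambda + \varepsilon')\norm{\rho\phi}_{L^2}$, and by tameness (6) applied to the primed tuple, $\Pi_{\lambda,t}\rho\phi$ captures all but an $O(\varepsilon')$-fraction of $\rho\phi$ and $\dim E_{\le\lambda}((D'+th')^2) \ge \dim E_{\le\lambda}((D+th)^2)$.

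Third, I would run the symmetric argument with the roles of $M$ and $M'$ exchanged — note that \cref{prop6:Witten deformation}'s spectral hypothesis is only assumed on the \emph{unprimed} side, so for the reverse inequality one cannot quote a ready-made spectral stability statement for $(D+th)^2$; instead one argues directly that $\rho \cdot E_{\le\lambda}((D'+th')^2)$ sits inside the span of eigenspaces of $(D+th)^2$ with eigenvalue $\le \lambda + \varepsilon'$, and then uses the hypothesis that $\sigma((D+th)^2)\cap[0,\lambda]$ is $t$-independent together with $\lambda \notin \sigma((D+th)^2)$ to conclude there is a genuine gap just above $\lambda$, so that $\lambda + \varepsilon' <$ (next eigenvalue) forces the image to land in $E_{\le\lambda}((D+th)^2)$ after all. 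This yields the reverse dimension inequality, hence equality of dimensions, and the three displayed distance bounds then follow by combining the almost-isometry estimates on each side with the triangle inequality for $d_{L^2}$ (the third bound uses the isometric identification $\tilde\varphi$ on $\mathrm{supp}\,\rho$, so that $\rho\cdot E_{\lambda}((D+th)^2)$ and $\rho\cdot E_{\lambda}((D'+th')^2)$ are being compared inside the \emph{same} Hilbert space via $\tilde\varphi$).

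The main obstacle I expect is the reverse direction: proving the dimension inequality $\dim E_{\le\lambda}((D'+th')^2) \le \dim E_{\le\lambda}((D+th)^2)$ without a spectral-stability hypothesis on the primed operators. The asymmetry of the hypotheses in \cref{prop6:Witten deformation} is deliberate, and the trick is that one only needs stability "from below" for the \emph{unprimed} side combined with the $t$-independence of $\sigma((D+th)^2)\cap[0,\lambda]$: this manufactures, for all large $t$ simultaneously, a definite gap $(\lambda, \lambda+2\varepsilon_0)$ free of unprimed spectrum, into which the push-forward of any low-lying primed eigenspinor cannot leak once $t$ is large enough to make the cutoff error smaller than $\varepsilon_0$. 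Getting the quantifiers in the right order here (choose $\varepsilon_0$ from the unprimed gap first, then $T_1$ from the localization estimate) is the delicate bookkeeping; everything else is routine elliptic estimates and the tameness axioms.
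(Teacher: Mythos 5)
Your proposal follows the same overall skeleton as the paper's proof — localization of eigenspinors away from $F$ via tameness~(5) (the paper's \cref{lem6:L^2 localization}), the estimate that cutting off by $\rho$ essentially preserves the $(D+th)$-Rayleigh quotients (the paper's \cref{lem6:D+th localization}), and a min-max/trial-space argument via tameness~(6) to compare dimensions. You also correctly flag the central subtlety: the hypotheses of \cref{prop6:Witten deformation} are asymmetric, so the reverse dimension inequality needs a dedicated argument. But your resolution of that subtlety contains a genuine error of the type you yourself warned against.

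You propose to conclude $\dim E_{\leq\lambda}((D'+th')^2) \leq \dim E_{\leq\lambda}((D+th)^2)$ by arguing that $\rho \cdot E_{\leq\lambda}((D'+th')^2)$ lands in the unprimed eigenspaces with eigenvalue $\leq\lambda + \varepsilon'$, and then invoking ``a genuine gap just above $\lambda$'' so that $\lambda + \varepsilon' <$ (next eigenvalue). This appeals to a lower bound on the gap $(\lambda, \sigma((D+th)^2)\cap(\lambda,\infty))$ that is \emph{uniform in $t$} — but the hypotheses give you no such thing. What is $t$-independent is $\sigma((D+th)^2)\cap[0,\lambda]$, so the gap \emph{below} $\lambda$ (i.e.\ $\lambda - \lambda_r$, where $\lambda_r$ is the largest unprimed eigenvalue $\leq\lambda$) is uniformly bounded away from zero; the first eigenvalue \emph{above} $\lambda$ could creep arbitrarily close to $\lambda$ as $t\to\infty$, defeating your attempt to ``choose $\varepsilon_0$ from the unprimed gap first.'' The paper's argument avoids this by routing through the gap \emph{below} $\lambda$: the cutoff of $E_{\leq\lambda}((D+th)^2)$ provides a trial subspace in $L^2(S')$ whose Rayleigh quotients are $<\lambda - g/2$, and the ``moreover'' clause of tameness~(6) then forces the primed eigenvalues below $\lambda$ to themselves be $<\lambda - g/2$; cutting off $E_{\leq\lambda}((D'+th')^2)$ then gives a trial subspace with Rayleigh quotients strictly below $\lambda$, and tameness~(6) in the other direction yields the reverse inequality. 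No gap above $\lambda$ is ever used. (A secondary, smaller imprecision: ``sits inside the span of eigenspaces with eigenvalue $\le\lambda+\varepsilon'$'' and ``forces the image to land in'' assert exact inclusions; these should be Rayleigh-quotient bounds feeding into the min-max, since the cutoff spinors will have nonzero components in arbitrarily high eigenspaces. Finally, the paper packages the Grassmannian-distance conclusion as a separate abstract lemma, \cref{lem6:Hilb distance}, applied with $A=(D+th)^2$ and $E=\rho\cdot E_{\leq\lambda}((D'+th')^2)$; your informal ``projection captures all but $O(\varepsilon')$ + triangle inequality'' reasoning is the same idea but would need to be made precise in the same way.)
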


First, we give the proof of \cref{prop6:distance is small} assuming \cref{lem6:distance is small}.

\begin{proof}[Proof of \cref{prop6:distance is small}]
  In this proof, we use the abbreviations
  \[
    E_{\lambda, t} = E_{\leq \lambda}((D + th)^2),\ E_{\lambda, t}^\prime = E_{\leq \lambda}((D^\prime + th^\prime)^2).
  \]
  Let $\varepsilon > 0$ be arbitrary. Take $T_0 \geq \max\{T, T^\prime\}$ such that for any $t \geq T_0$,
  \begin{align*}
    d_{L^2(S)}(E_{\lambda, t}, \rho \cdot E_{\lambda, t})                      & < \varepsilon / 3, \\
    d_{L^2(S^\prime)}(E_{\lambda, t}^\prime, \rho \cdot E_{\lambda, t}^\prime) & < \varepsilon / 3, \\
    d_{L^2(S)}(E_{\lambda, t}, \rho \cdot E_{\lambda, t}^\prime)               & < \varepsilon / 3.
  \end{align*}
  We obtain the estimate
  \begin{align*}
    d_{L^2(S^\prime)}(\rho \cdot E_{\lambda, t}, E_{\lambda, t}^\prime) & \leq d_{L^2(S^\prime)}(\rho \cdot E_{\lambda, t}, \rho \cdot E_{\lambda, t}^\prime) + d_{L^2(S^\prime)}(\rho \cdot E_{\lambda, t}^\prime, E_{\lambda, t}^\prime).
  \end{align*}
  Here, we have
  \[
    d_{L^2(S^\prime)}(\rho \cdot E_{\lambda, t}, \rho \cdot E_{\lambda, t}^\prime) = d_{L^2(S)}(\rho \cdot E_{\lambda, t}, \rho \cdot E_{\lambda, t}^\prime),
  \]
  and so for $t \geq T_0$,
  \begin{align*}
         & d_{L^2(S^\prime)}(\rho \cdot E_{\lambda, t}, E_{\lambda, t}^\prime)                                                                                                                               \\
    \leq & d_{L^2(S)}(\rho \cdot E_{\lambda, t}, E_{\lambda, t}) + d_{L^2(S)}(E_{\lambda, t}, \rho \cdot E_{\lambda, t}^\prime) + d_{L^2(S^\prime)}(\rho \cdot E_{\lambda, t}^\prime, E_{\lambda, t}^\prime) \\
    <    & \varepsilon /3 + \varepsilon / 3 + \varepsilon / 3 = \varepsilon,
  \end{align*}
  which is the desired estimate.
\end{proof}

We now proceed to the proof of \cref{lem6:distance is small}. First, we prove a lemma corresponding to Miyazawa\cite[Lemma 6.1]{miyazawa2021localization}.

\begin{lemma}\label{lem6:L^2 localization}
  Let $(M, S, c, D, h, F, \bar \lambda, T)$ be a tame tuple. Take a smooth function
  \[
    \map{\rho}{M}{[0, 1]}
  \]
  which is identically 1 on a neighborhood of $F$. Then, there exist functions
  \[
    \map{A}{[T, \infty)}{(0, \infty)},\ \map{B}{[T, \infty)}{[0, 1]}
  \]
  satisfying the following properties:
  \begin{itemize}
    \item For any $t \geq T$ and $\phi \in L^2_2(S)$ such that
          \[
            \norm{(D + th) \phi}_{L^2(M, S)}^2 \leq \bar \lambda \norm{\phi}_{L^2(M, S)}^2,
          \]
          we have
          \begin{align}
            \norm{\phi}_{L^2(M \setminus F, S)} & \leq A(t) \norm{\phi}_{L^2(M, S)}, \label{eq6:A(t)}                               \\
            B(t) \norm{\phi}_{L^2(M, S)}        & \leq \norm{\rho \phi}_{L^2(M, S)} \leq \norm{\phi}_{L^2(M, S)} \label{eq6: B(t)}.
          \end{align}
          Here, for an open subset $V$ of $M$, we set
          \[
            \norm{\phi}_{L^2(V, S)}^2 = \int_{V} \abs{\phi}^2.
          \]
    \item As $t \to \infty$, we have
          \[
            A(t) \to 0,\ B(t) \to 1.
          \]
  \end{itemize}
\end{lemma}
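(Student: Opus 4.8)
The plan is to follow the template of Miyazawa \cite[Lemma 6.1]{miyazawa2021localization}: this is a Witten-type concentration estimate, in which the energy bound $\norm{(D+th)\phi}_{L^2}^2\le\bar\lambda\norm{\phi}_{L^2}^2$ is played off against a lower bound for the zeroth-order ``potential'' $t\{D,h\}+t^2h^2$, which tameness forces to grow quadratically in $t$ away from $F$; once $\phi$ is shown to concentrate near $F$, the two asserted inequalities fall out immediately.

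First I would record the constants supplied by tameness. Set $C_1:=\norm{\{D,h\}}_{L^\infty}$, which is finite by \cref{def6:tame tuple}(2), and $C_2:=\norm{h^{-1}}_{L^\infty(M\setminus F)}$, which is finite by \cref{def6:tame tuple}(3). Since $h$ is fiberwise self-adjoint, the second bound gives $h^2\ge C_2^{-2}$ pointwise on $M\setminus F$, while the first gives $\{D,h\}\ge -C_1$ everywhere; combining these with \cref{def6:tame tuple}(5) shows that, pointwise on $M\setminus F$ and for every $t\ge T$,
\[
  t\{D,h\}+t^2h^2 \ \ge\ \mu(t)\,:=\,\max\bigl\{\bar\lambda,\ t^2C_2^{-2}-tC_1\bigr\}\ \ge\ \bar\lambda\,>\,0 .
\]

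Next, for $\phi\in L^2_2(S)$ one uses the identity
\[
  \norm{(D+th)\phi}_{L^2}^2=\norm{D\phi}_{L^2}^2+\int_M\bigl\langle(t\{D,h\}+t^2h^2)\phi,\phi\bigr\rangle ,
\]
which holds because $D\phi$, $h\phi=t^{-1}\bigl((D+th)\phi-D\phi\bigr)$ and $\{D,h\}\phi$ all lie in $L^2$ and the cross term equals $t\langle\{D,h\}\phi,\phi\rangle$. Dropping $\norm{D\phi}^2\ge 0$ and invoking the hypothesis gives $\int_M\langle(t\{D,h\}+t^2h^2)\phi,\phi\rangle\le\bar\lambda\norm{\phi}_{L^2(M)}^2$. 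Splitting the integral as $\int_F+\int_{M\setminus F}$, bounding $\int_F\langle\cdots\rangle\ge -tC_1\norm{\phi}_{L^2(F)}^2\ge-tC_1\norm{\phi}_{L^2(M)}^2$ (using $h^2\ge 0$) and $\int_{M\setminus F}\langle\cdots\rangle\ge\mu(t)\norm{\phi}_{L^2(M\setminus F)}^2$, one obtains $\mu(t)\norm{\phi}_{L^2(M\setminus F)}^2\le(\bar\lambda+tC_1)\norm{\phi}_{L^2(M)}^2$. This is \eqref{eq6:A(t)} with $A(t):=\bigl((\bar\lambda+tC_1)/\mu(t)\bigr)^{1/2}$, and since $\mu(t)\ge t^2C_2^{-2}-tC_1$ is quadratic in $t$ while the numerator is linear, $A(t)\to 0$ as $t\to\infty$. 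For \eqref{eq6: B(t)} the upper bound is immediate from $0\le\rho\le 1$, and since $\rho\equiv 1$ on a neighborhood of $F$ one has $\norm{\rho\phi}_{L^2(M)}^2\ge\int_F\abs{\phi}^2=\norm{\phi}_{L^2(M)}^2-\norm{\phi}_{L^2(M\setminus F)}^2\ge(1-A(t)^2)\norm{\phi}_{L^2(M)}^2$, so one takes $B(t):=\bigl(\max\{0,\,1-A(t)^2\}\bigr)^{1/2}\in[0,1]$, which tends to $1$ because $A(t)\to 0$.

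I expect the only genuinely delicate part to be the analytic justification of the energy identity above when $\phi$ is merely of class $L^2_2(S)$ and $h$ is not assumed bounded — the domain questions for $D+th$ and the legitimacy of the integration by parts in the cross term. This is exactly the bookkeeping that the tame-tuple formalism of \cite{Furuta2007Index-Theorem} (and the parallel discussion in \cite{miyazawa2021localization}) is designed to handle, so in the write-up I would quote it there rather than reprove it; everything else reduces to the two elementary pointwise operator inequalities recorded above together with the split of the $L^2$-inner product over $F$ and its complement.
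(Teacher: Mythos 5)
Your argument is correct and matches the paper's proof essentially step for step: expand $\norm{(D+th)\phi}^2$, drop the $\norm{D\phi}^2$ term, use $\norm{h^{-1}}_{L^\infty(M\setminus F)}$ to get a quadratic-in-$t$ lower bound on the potential over $M\setminus F$ against a linear-in-$t$ bound on the cross term, and then derive $B(t)$ from $A(t)$ and the fact that $\rho\equiv 1$ near $F$. The only cosmetic differences are that you fold the two pointwise bounds into a single $\mu(t)$ and take $B(t)=\sqrt{\max\{0,1-A(t)^2\}}$ where the paper uses $B(t)=\max\{0,1-A(t)\}$; both are valid, and your cross-term coefficient $t$ (rather than the paper's $2t$) is in fact the correct one for $\{D,h\}=Dh+hD$.
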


\begin{proof}
  The proof is identical to that of Miyazawa\cite[Lemma 6.1]{miyazawa2021localization}. We repeat the proof here for the reader's convenience. Assume that $\phi \in L^2_2(S)$ satisfies
  \[
    \norm{(D + th) \phi}_{L^2(M, S)}^2 \leq \bar \lambda \norm{\phi}_{L^2(M, S)}^2
  \]
  Then, the following estimates hold:
  \begin{align*}
    \bar \lambda \norm{\phi}_{L^2(M, S)}^2 & \geq \norm{(D + th) \phi}_{L^2(M, S)}^2                                                                                                                             \\
                                           & = \norm{D \phi}_{L^2(M, S)}^2 + t^2 \norm{h \phi}_{L^2(M, S)}^2 + 2 t \abracket{\{D, h\}\phi, \phi}_{L^2(M, S)}                                                     \\
                                           & \geq t^2 \norm{h \phi}_{L^2(M \setminus F)}^2 - 2t \norm{\{D, h\}}_{L^\infty} \norm{\phi}_{L^2(M, S)}^2                                                             \\
                                           & \geq t^2 \frac{\norm{\phi}_{L^2(M \setminus F, S)}^2}{\norm{h^{-1}}_{L^\infty(M \setminus F, S)}^2} - 2t \norm{\{D, h\}}_{L^\infty(M,S)} \norm{\phi}_{L^2(M, S)}^2.
  \end{align*}
  Rearranging this inequality, we obtain
  \[
    \norm{\phi}_{L^2(M \setminus F, S)}^2 \leq \frac{(\bar \lambda + 2t \norm{\{D, h\}}_{L^\infty(M,S)}) \norm{h^{-1}}_{L^\infty(M \setminus F, S)}^2}{t^2} \norm{\phi}_{L^2(M, S)}^2.
  \]
  Therefore, we can set
  \[
    A(t)^2 = \frac{(\bar \lambda + 2t \norm{\{D, h\}}_{L^\infty}) \norm{h^{-1}}_{L^\infty(M \setminus F, S)}^2}{t^2}.
  \]
  Moreover, we have
  \begin{align*}
    \norm{\rho \phi}_{L^2(M, S)} & \geq \norm{\phi}_{L^2(M, S)} - \norm{(1 - \rho) \phi}_{L^2(M, S)}  \\
                                 & \geq \norm{\phi}_{L^2(M, S)} - \norm{\phi}_{L^2(M \setminus F, S)} \\
                                 & \geq \norm{\phi}_{L^2(M ,S)} - A(t) \norm{\phi}_{L^2(M, S)}        \\
                                 & = (1 - A(t)) \norm{\phi}_{L^2(M, S)}.
  \end{align*}
  Hence, we can set
  \[
    B(t) = \max\{0, 1 - A(t)\}
  \]
  and this finishes the proof.
\end{proof}

Next, we prove a refinement of the inequality shown in the proof of Miyazawa\cite[Lemma 6.2]{miyazawa2021localization}.

\begin{lemma}\label{lem6:D+th localization}
  Let a tame tuple
  \[
    (M, S, c, D, h, F, \bar \lambda, T)
  \]
  be given. Take a smooth function
  \[
    \map{\rho}{M}{[0, 1]}
  \]
  which is identically 1 on a neighborhood of $F$ and $L^\infty_1$-bounded. Then, for any $\delta > 0$, there exists $T_2 \geq \max\{T, T^\prime\}$ such that for any $t \geq T_2$ and $\phi \in E_{\bar \lambda}((D + th^2))$ with $\norm{\phi}_{L^2} = 1$, we have
  \[
    -\delta < \norm{(D + th)(\rho\phi)}_{L^2}^2 - \norm {(D + th) \phi}_{L^2}^2 < \delta
  \]
\end{lemma}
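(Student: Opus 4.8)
The plan is to decompose $(D+th)(\rho\phi)$ into a ``bulk'' piece on which the cutoff $\rho$ is harmless, plus an error term supported away from $F$, and then to control both error contributions with \cref{lem6:L^2 localization}. Since $h$ is a fiberwise self-adjoint bundle endomorphism it commutes with multiplication by $\rho$, while $D$ is first order with symbol $ic$; hence the Leibniz rule gives
\[
  (D+th)(\rho\phi)=\rho\,(D+th)\phi+[D,\rho]\phi ,
\]
where $[D,\rho]$ is the zeroth-order endomorphism given by Clifford multiplication by $d\rho$ (up to a constant factor). As $c$ is $L^\infty$-bounded and $\rho$ is $L^\infty_1$-bounded, $[D,\rho]$ is $L^\infty$-bounded, and since $\rho\equiv 1$ on a neighbourhood of $F$ it is supported in $M\setminus F$.

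First I would record two consequences of \cref{lem6:L^2 localization}, with the cutoff $\rho$ fixed as in the statement. Applying it to $\phi$---which lies in $L^2_2(S)$ because the relevant spectral subspace of $(D+th)^2$ is, by a tameness hypothesis, contained in $L^2_2(S)$, and which satisfies $\norm{(D+th)\phi}_{L^2}^2\le\bar\lambda\norm{\phi}_{L^2}^2$---yields $\norm{\phi}_{L^2(M\setminus F,S)}\le A(t)$. The key observation is that $(D+th)\phi$ again lies in that spectral subspace, since $(D+th)$ commutes with $(D+th)^2$; hence $(D+th)\phi\in L^2_2(S)$ as well, with $\norm{(D+th)\bigl((D+th)\phi\bigr)}_{L^2}^2\le\bar\lambda\norm{(D+th)\phi}_{L^2}^2$, so a second application of \cref{lem6:L^2 localization} gives $\norm{(D+th)\phi}_{L^2(M\setminus F,S)}\le A(t)\sqrt{\bar\lambda}$. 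Both bounds are uniform in $\phi$, since the function $A(t)$ depends only on the tame tuple, and $A(t)\to 0$ as $t\to\infty$.

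Then I would expand squares. Put $v=\rho\,(D+th)\phi$ and $e=[D,\rho]\phi$; then $\norm{e}_{L^2}\le C\,\norm{\phi}_{L^2(M\setminus F)}\le CA(t)$ with $C=\norm{[D,\rho]}_{L^\infty}$, and $\norm{v}_{L^2}\le\norm{(D+th)\phi}_{L^2}\le\sqrt{\bar\lambda}$ (as $\norm{\phi}_{L^2}=1$). Consequently
\[
  \Bigl|\,\norm{(D+th)(\rho\phi)}_{L^2}^2-\norm{v}_{L^2}^2\,\Bigr|
  =\Bigl|\,2\RePart\abracket{v,e}_{L^2}+\norm{e}_{L^2}^2\,\Bigr|
  \le 2\sqrt{\bar\lambda}\,CA(t)+C^2A(t)^2 ,
\]
while, since $1-\rho^2$ vanishes near $F$ and is bounded by $1$,
\[
  \Bigl|\,\norm{v}_{L^2}^2-\norm{(D+th)\phi}_{L^2}^2\,\Bigr|
  =\int_M(1-\rho^2)\,\lvert(D+th)\phi\rvert^2
  \le\norm{(D+th)\phi}_{L^2(M\setminus F)}^2\le\bar\lambda\,A(t)^2 .
\]
Adding the two bounds, $\bigl|\,\norm{(D+th)(\rho\phi)}_{L^2}^2-\norm{(D+th)\phi}_{L^2}^2\,\bigr|\le 2\sqrt{\bar\lambda}\,CA(t)+(C^2+\bar\lambda)A(t)^2$, which tends to $0$ as $t\to\infty$; choosing $T_2\ge T$ so that this quantity is $<\delta$ for all $t\ge T_2$ finishes the proof.

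I expect the main (and only mildly subtle) point to be the middle step: the decay of $\norm{(D+th)\phi}_{L^2(M\setminus F)}$ is \emph{not} among the tameness axioms, but is produced by applying \cref{lem6:L^2 localization} to $(D+th)\phi$ rather than to $\phi$, which forces one to check that $(D+th)\phi$ still satisfies that lemma's hypotheses with the \emph{same} universal function $A(t)$ (so that the resulting bound is still uniform in $\phi$). Everything else---the Leibniz rule, the Cauchy--Schwarz estimate on the cross term, and the passage to the limit $t\to\infty$---is routine.
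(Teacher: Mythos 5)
Your proof is correct and follows essentially the same route as the paper's: Leibniz expansion of $(D+th)(\rho\phi)$, Cauchy--Schwarz on the cross term, and \cref{lem6:L^2 localization} applied to both $\phi$ and $(D+th)\phi$ (the latter legitimate, as you observe, because it also lies in $E_{\leq\bar\lambda}((D+th)^2)$). The only cosmetic deviation is that you derive $\int(1-\rho^2)\abs{(D+th)\phi}^2 \leq \bar\lambda\, A(t)^2$ directly from the $A(t)$-estimate on $(D+th)\phi$, whereas the paper instead invokes the $B(t)$-estimate for $(D+th)\phi$---an equivalent bound since $B(t)$ is itself built from $A(t)$.
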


\begin{proof}
  The following equality holds:
  \begin{align}\label{eq6:to be estimated}
    \begin{split}
        & \norm{(D + th) \rho \phi}_{L^2}^2                                                                                                  \\
      = & \norm{[D, \rho] \phi}_{L^2}^2 + 2 \RePart \abracket{\rho (D + th) \phi, [D, \rho] \phi}_{L^2} + \norm{\rho (D + th) \phi}_{L^2}^2.
    \end{split}
  \end{align}
  Since $\phi \in E_{\leq \bar \lambda}((D + th)^2)$, we have
  \[
    \norm{(D + th) \phi}_{L^2}^2 \leq \bar \lambda \norm{\phi}_{L^2}^2 \leq \bar \lambda
  \]
  Also, since $\rho$ is identically 1 on a neighborhood of $F$, we have
  \begin{align*}
    \norm{[D, \rho] \phi}_{L^2} & \leq \norm{[D, \rho] \phi}_{L^2(M \setminus F)}                                   \\
                                & = C \norm{c}_{L^\infty} \norm{d \rho}_{L^\infty} \norm{\phi}_{L^2(M \setminus F)} \\
                                & \leq C \norm{c}_{L^\infty} \norm{d \rho}_{L^\infty} A(t) \norm{\phi}_{L^2}        \\
                                & = C \norm{c}_{L^\infty} \norm{d \rho}_{L^\infty} A(t).
  \end{align*}
  Here, $C$ is a constant depending only on the dimension of $M$. Set
  \[
    A^\prime(t) = C \norm{c}_{L^\infty} \norm{d \rho}_{L^\infty} A(t).
  \]
  From the above estimates, we have
  \[
    \abs{\norm{[D, \rho] \phi}_{L^2}^2 + 2 \RePart \abracket{\rho (D + th) \phi, [D, \rho] \phi}_{L^2}} \leq A^\prime(t)^2 + 2 A^\prime(t) \sqrt{\bar \lambda}.
  \]
  Also, since $(D + th) \phi \in E_{\leq \lambda}((D + th)^2)$, by \cref{lem6:L^2 localization}, we have
  \[
    B(t) \norm{(D + th) \phi}_{L^2} \leq \norm{\rho (D + th) \phi}_{L^2} \leq \norm{(D + th) \phi}_{L^2}.
  \]
  Combining these, we obtain
  \begin{align*}
    \norm{(D + th) \phi}_{L^2}^2 & - (1 - B(t)^2) \bar \lambda - A^\prime(t)^2 - 2 A^\prime(t) \sqrt{\bar \lambda}                                               \\
                                 & \leq \norm{(D + th) \rho \phi}_{L^2}^2 \leq \norm{(D + th) \phi}_{L^2}^2 + A^\prime(t)^2 + 2 A^\prime(t) \sqrt{\bar \lambda}.
  \end{align*}
  Since $A(t) \to 0$ and $B(t) \to 1$ as $t \to \infty$, we obtain the desired inequality.
\end{proof}

Before proving \cref{lem6:distance is small}, we prove a general lemma about non-negative self-adjoint unbounded operators on Hilbert spaces. Let us introduce some notation to state the lemma. Let $H$ be a Hilbert space. Fix a non-negative real number $r$ and positive real numbers $\lambda_0 < \lambda_1$. Let $\mscrS_{r, \lambda_0, \lambda_1}$ be the set of non-negative self-adjoint operators $A$ on $H$ satisfying:
\begin{itemize}
  \item The spectrum of $A$ up to $\lambda_0$ is discrete and the dimension of the direct sum of the eigenspaces up to $\lambda_0$ is $r$. Furthermore, the spectrum of $A$ and $[\lambda_0, \lambda_1]$ is empty.
  \item For any $v \in E_{\leq \lambda}(A)^\perp \cap D(A)$, we have
        \[
          \abracket{Av, v} \geq \lambda_1 \norm{v}^2.
        \]
        Moreover, $E_{\leq \lambda_1}(A)$ and $E_{\leq \lambda_1}(A)^\perp \cap D(A)$ are orthogonal with respect to the quadratic form
        \[
          v \mapsto \abracket{Av, v}.
        \]
\end{itemize}
Take any $A \in \mscrS_{r, \lambda_0, \lambda_1}$ and an $r$-dimensional subspace $E$ of $D(A)$. Let
\[
  \lambda_1(A, E) \leq \cdots \leq \lambda_r(A, E)
\]
be the eigenvalues (with multiplicity) of the restriction of the quadratic form
\[
  v \mapsto \abracket{Av, v}
\]
to $E$, arranged in increasing order. When $E = E_{\leq \lambda_0}(A) = E_{\leq \lambda_1}(A)$, these coincide with the eigenvalues of $A$.

The following lemma is essentially the same as Furuta\cite[Theorem 5.35]{Furuta2007Index-Theorem}.

\begin{lemma}\label{lem6:Hilb distance}
  Let $H$ be a Hilbert space. Fix a non-negative real number $r$ and  positive real numbers $\lambda_0, \lambda_1$. Then, for any $d > 0$, there exists $\delta > 0$ such that the following holds: For any $A \in \mscrS_{r, \lambda_0, \lambda_1}$ and an $r$-dimensional subspace $E$ of $D(A)$, if we have
  \begin{align*}
    \abs{\lambda_k(A, E) - \lambda_k(A, E_{\leq \lambda_0}(A))} < \delta, &  &  & 1 \leq k \leq r,
  \end{align*}
  then we have
  \[
    d_H(E, E_{\leq \lambda_0}(A)) < d.
  \]
\end{lemma}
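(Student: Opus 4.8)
The plan is to deduce this from a purely finite-dimensional rigidity statement, which is then proved by a soft compactness argument; the spectral gap $\mu_r<\lambda$ (built into the definition of $\mscrS_{\lambda,\Lambda}$ since every element of $\Lambda$ is $<\lambda$) is the one essential input.

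\emph{Finite-dimensional reduction.} First I would set $F=E_{\le\lambda}(A)$, which by the definition of $\mscrS_{\lambda,\Lambda}$ is an $r$-dimensional subspace of $D(A)$ spanned by eigenvectors, with $A|_F$ having eigenvalues exactly $\Lambda$. Put $H_0=E+F\subseteq D(A)$, a finite-dimensional space with $r\le\dim H_0\le 2r$, and let $B$ be the self-adjoint operator on $H_0$ defined by $\langle Bv,w\rangle=\langle Av,w\rangle$ for $v,w\in H_0$. Since $A$ preserves $F$, $F$ is $B$-invariant with $B|_F=A|_F$, hence $H_0\ominus F$ is $B$-invariant as well, and for $v\in H_0\ominus F\subseteq E_{\le\lambda}(A)^\perp\cap D(A)$ one has $\langle Bv,v\rangle=\langle Av,v\rangle\ge\lambda\|v\|^2$. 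Because the Hermitian form $\langle B\cdot,\cdot\rangle|_E$ has exactly the eigenvalues $\lambda_k(A,E)$, and because $d_{H_0}(E,F)=d_H(E,F)$ (the distance only involves inner products among vectors of $E\cup F$), it suffices to prove the statement for the triple $(B,E,F)$ inside $H_0$.

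\emph{Truncation.} The operator $B$ can have arbitrarily large eigenvalues on $H_0\ominus F$, which would block compactness, so I would replace it by $\hat B$, equal to $B$ on $F$ and to $\lambda\cdot\mathrm{Id}$ on $H_0\ominus F$. Then $0\le\hat B\le B$ as quadratic forms and $\|\hat B\|\le\lambda$, while (using $\mu_r<\lambda$) the eigenvalues of $\hat B$ on $H_0$ are $\mu_1\le\dots\le\mu_r$ followed by $\dim H_0-r$ copies of $\lambda$. Writing $\hat\nu_1\le\dots\le\hat\nu_r$ for the eigenvalues of $\langle\hat B\cdot,\cdot\rangle|_E$, monotonicity gives $\hat\nu_k\le\lambda_k(A,E)$, and the Poincaré separation (Cauchy interlacing) inequalities give $\hat\nu_k\ge\mu_k$; hence $|\lambda_k(A,E)-\mu_k|<\delta$ for all $k$ implies $|\hat\nu_k-\mu_k|<\delta$ for all $k$. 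Now, with $\dim H_0$ fixed, the datum $(H_0,F,\hat B,E)$ ranges over a compact set: $F$ and $E$ lie in a Grassmannian $\Gr(r,\mathbb C^{d})$ and $\hat B$ in a norm-bounded set of positive operators. So it remains to prove: for all $d_0>0$ there is $\delta>0$ such that, for any self-adjoint $\hat B$ on a $d$-dimensional Hilbert space ($r\le d\le 2r$) with an $r$-dimensional invariant subspace $F$ on which $\hat B$ has eigenvalues $\Lambda$ and with $\hat B|_{H_0\ominus F}=\lambda\cdot\mathrm{Id}$, and any $r$-dimensional $E$ with $|\hat\nu_k-\mu_k|<\delta$, one has $d(E,F)<d_0$.

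\emph{Compactness and rigidity.} If this fails for some $d_0$, take data $(H_0^{(n)},F_n,\hat B_n,E_n)$ with $|\hat\nu_k^{(n)}-\mu_k|<1/n$ but $d(E_n,F_n)\ge d_0$; passing to a subsequence, $\dim H_0^{(n)}=d$ is constant, and, identifying with $\mathbb C^d$, $F_n\to F$, $E_n\to E$ in the Grassmannian and $\hat B_n\to\hat B$. The limit satisfies the same structural conditions, and the eigenvalues of $\langle\hat B\cdot,\cdot\rangle|_E$ equal $\lim_n\hat\nu_k^{(n)}=\mu_k$. Thus $\langle\hat B\cdot,\cdot\rangle|_E$ realizes exactly the $r$ smallest eigenvalues of $\hat B$, which are separated by a strict gap $\mu_r<\lambda$ from the rest. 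The equality case of Ky Fan's trace inequality then forces $E$ to be the span of the first $r$ eigenvectors of $\hat B$, i.e.\ $E=F$: concretely, diagonalizing $\hat B=\mathrm{diag}(\mu_1,\dots,\mu_r,\lambda,\dots,\lambda)$ and setting $\rho_j=(P_E)_{jj}\in[0,1]$, the identity $\sum_j\beta_j\rho_j=\sum_{k\le r}\mu_k$ with $\sum_j\rho_j=r$ and $\mu_r<\lambda$ forces $\rho_j=1$ for $j\le r$, hence $e_j\in E$ for $j\le r$, hence $E=F$. This contradicts $d(E_n,F_n)\ge d_0$ with $E_n\to E=F\leftarrow F_n$, proving the finite-dimensional claim, and the original statement follows.

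\emph{Main obstacle.} The delicate point is that a direct quantitative route (bounding $d_H(E,E_{\le\lambda}(A))$ via partial sums of eigenvalues and Ky Fan) only produces an estimate in terms of quantities such as $\bigl(\sum_k(\mu_k-\mu_1)\bigr)/(\lambda-\mu_1)$, which need not be small; uniformity over all $A\in\mscrS_{\lambda,\Lambda}$ genuinely requires the compactness argument. Making that argument legitimate forces the truncation, whose two load-bearing facts are that it only decreases the quadratic form (so the pinching $\mu_k\le\hat\nu_k<\mu_k+\delta$ survives) and that it keeps $F$ equal to the spectral subspace of $\hat B$ below $\lambda$ — and both this and the rigidity conclusion in the last step hinge on the strict inequality $\mu_r<\lambda$.
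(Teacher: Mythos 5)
Your proof is correct but takes a genuinely different route from the paper's. The paper argues directly and quantitatively: with $\delta < (\lambda - \max\Lambda)/2$ one gets $\lambda_k(A,E) < \lambda$ for all $k$, so $E$ is the graph of a linear map $f$ from $E_{\le\lambda}(A)$ to $E_{\le\lambda}(A)^\perp \cap D(A)$; comparing the Rayleigh quotient of $v_r + f(v_r)$ (with $v_r$ the top eigenvector of $A$ on $E_{\le\lambda}(A)$) against $\lambda_r(A,E)$ yields the explicit bound $\norm{f(v_r)}^2 \le 2\delta/(\lambda - \max\Lambda)$, after which one projects away $v_r$ and inducts on $r$. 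You instead restrict to $H_0 = E + E_{\le\lambda}(A)$, truncate the compressed form to $\lambda\cdot\mathrm{id}$ on $H_0 \ominus E_{\le\lambda}(A)$ (which preserves the pinching $\mu_k \le \hat\nu_k \le \lambda_k(A,E)$ via monotonicity and Cauchy interlacing), and close with a Grassmannian compactness argument whose rigidity endpoint is the equality case of Ky Fan's trace inequality. Your approach is conceptually clean and easy to believe, but it is soft: it produces no explicit $\delta$, whereas the paper's inductive estimate is effective. One remark in your ``Main obstacle'' paragraph should be corrected: you claim that uniformity over $A\in\mscrS_{\lambda,\Lambda}$ ``genuinely requires'' the compactness argument, but the paper's direct estimate is already uniform in $(A,E)$ --- the only constants it uses are $\lambda - \max\Lambda$ and $r$, both determined by $(\lambda,\Lambda)$. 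What fails is only the specific partial-sums/Ky-Fan bound you sketch; the graph-of-a-map route succeeds directly and quantitatively.
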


\begin{proof}
  Take any $A \in \mscrS_{r, \lambda_0, \lambda_1}$ and an $r$-dimensional subspace $E$ of $D(A)$. Further, assume that for all $1 \leq k \leq r$, we have
  \[
    \abs{\lambda_k(A, E) - \lambda_k(A, E_{\leq \lambda_0}(A))} < \delta.
  \]
  We may take $\delta$ to be smaller than $(\lambda_1 - \lambda_0)/2$. From the fact
  \begin{align*}
    \abracket{Av, v} \geq \lambda_1 \norm{v}^2, &  &  & v \in E_{\leq \lambda_0}(A)^\perp \cap D(A),
  \end{align*}
  it follows that $E$ can be described as the graph of a linear map
  \[
    \map{f}{E_{\leq \lambda_0}(A)}{E_{\leq \lambda_0}(A)^\perp \cap D(A)}.
  \]
  We prove the statement by induction on $r$ in this situation. Let $v_r \in E_{\leq \lambda_0}(A)$ be a unit eigenvector of $A$ corresponding to the eigenvalue $\lambda_r(A, E_{\leq \lambda_0}(A))$. Then we have
  \begin{align*}
    \lambda_r(A, E) & \geq \frac{\abracket{Av_r, v_r} + \abracket{A f(v_r), f(v_r)}}{\norm{v_r}^2 + \norm{f(v_r)}^2}                             \\
                    & \geq  \frac{\lambda_r(A, E_{\leq \lambda_0}(A)) \norm{v_r}^2 + \lambda_1 \norm{f(v_r)}^2}{\norm{v_r}^2 + \norm{f(v_r)}^2}.
  \end{align*}
  Rearranging this, we obtain
  \[
    \norm{f(v_r)}^2 \leq \frac{\lambda_r(A, E) - \lambda_r(A, E_{\leq \lambda_0}(A))}{\lambda_1 - \lambda_r(A, E)} \leq \frac{2 \delta}{\lambda_1 - \lambda_0}.
  \]
  Let $E^\prime$ be the subspace of $E$ spanned by $v_r$ and vectors orthogonal to $v_r + f(v_r)$. The above inequality implies that if $\delta$ is sufficiently small, we have
  \[
    d(E, E^\prime) \leq d/r.
  \]
  The spaces $E^\prime$ and $E_{\leq \lambda}(A)$ share the common vector $v_r$. So it suffices to show that the distance between
  \[
    E \cap v_r^\perp,\ E_{\leq \lambda_0} \cap v_r^\perp
  \]
  is small. It can be seen that the eigenvalues of these $(r-1)$-dimensional subspaces are almost equal, so the problem reduces to the $(r-1)$-dimensional case. In the above argument, $\delta$ does not depend on the choice of $A$ and $E$.
\end{proof}

\begin{proof}[Proof of \cref{lem6:distance is small}]
  Let $\varepsilon > 0$ be arbitrary. For sufficiently large $t$, \cref{lem6:L^2 localization} implies that
  \begin{align*}
    d_{L^2(S)}(E_{\leq \lambda}((D + th)^2), \rho \cdot E_{\leq \lambda}(D + th)^2)                                    & < \varepsilon, \\
    d_{L^2(S^\prime)}(E_{\leq \lambda}((D^\prime + th^\prime)^2), \rho \cdot E_{\leq \lambda}(D^\prime + th^\prime)^2) & < \varepsilon.
  \end{align*}
  Let
  \[
    \lambda_1 \leq \cdots \leq \lambda_r,\ \lambda_1^\prime \leq \cdots \leq \lambda^\prime_s
  \]
  be the eigenvalues (counted with multiplicity) up to $\lambda$ of $(D + th)^2$ and $(D^\prime + th^\prime)^2$, respectively. Further, let
  \[
    \lambda_1(\rho) \leq \cdots \leq \lambda_r(\rho),\ \lambda_1^\prime(\rho) \leq \cdots \leq \lambda^\prime_s(\rho)
  \]
  be the eigenvalues of the restrictions of $(D + th)^2$ and $(D^\prime + th^\prime)^2$ to
  \[
    \rho \cdot E_{\leq \lambda}((D + th)^2),\ \rho \cdot E_{\leq \lambda}((D^\prime + th^\prime)^2)
  \]
  respectively.
  From \cref{lem6:L^2 localization} and \cref{lem6:D+th localization}, for sufficiently large $t$, for all $1 \leq k \leq r$, $1 \leq l \leq s$, we have
  \begin{align}\label{eq6:similar eigenvalue}
    \lambda_k \approx \lambda_k(\rho),\ \lambda^\prime_l \approx \lambda_l^\prime(\rho).
  \end{align}
  Therefore, from \cref{def6:tame tuple}(6), we see that $r = s$. Furthermore, \cref{def6:tame tuple}(6) implies that for all $1 \leq k \leq r$, we have
  \[
    \lambda_k \leq \lambda_k^\prime(\rho),\ \lambda^\prime_k \leq \lambda_k(\rho).
  \]
  Hence, together with \cref{eq6:similar eigenvalue}, we see that
  \[
    \lambda_k \approx \lambda_k^\prime(\rho).
  \]
  Applying \cref{lem6:Hilb distance} with
  \[
    H = L^2(S),\ A = (D + th)^2,\ E = \rho \cdot E_{\leq \lambda}((D^\prime + th^\prime)^2),
  \]
  we see that
  \[
    d_{L^2(S)}(E_{\leq \lambda}((D + th)^2), \rho \cdot E_{\leq \lambda}((D^\prime + th^\prime)^2))                     < \varepsilon.
  \]
  This finishes the proof.
\end{proof}

\section{Appendix: Irreducible Clifford representations on quaternionic vector space}\label{sec7:ssho for H}

This section is preparatory for \cref{sec8:deform ssho}. We present two constructions of the irreducible Clifford representations on a quaternionic vector space and show that there is a canonical isomorphism between them.

Let $V_\HB$ be a quaternionic vector space with a $Pin(2)$-invariant metric. The representation we consider is that of $Cl(V_\HB^- \oplus V_\HB)$, where $V_\HB^-$ is the same quaternionic vector space as $V_\HB$, but with the metric multiplied by $-1$ (i.e., it is negative definite).

The goal of this section is to describe the representation of $Cl(V_\HB^- \oplus V_\HB)$ in two ways, taking into account the action of $j$ on $V_\HB$. The first construction uses the complex exterior algebra of $V_\HB$. The second construction uses the real exterior algebra of $V_\HB$.

First, we describe the representation of $Cl(V_\HB^- \oplus V_\HB)$ using two copies of the complex exterior algebra of $V_\HB$. Let
\[
  S_0^{V_\HB} = (\Lambda_\C^\ast V_\HB) \otimes_\C (\Lambda_\C^\ast V_\HB).
\]
We define
\[
  \map{c_0^{V_\HB}}{V_\HB \times S_0^{V_\HB}}{S_0^{V_\HB}},\ \map{h_0^{V_\HB}}{V_\HB}{\Herm(S_0^{V_\HB})},\ \map{\tau_0^{V_\HB}}{S_0^{V_\HB}}{S_0^{V_\HB}}
\]
by
\begin{align}\label{eq7:tau c h1}
  c_0^{V_\HB}(v, \omega_0 \otimes \omega_{V_\HB}) & = (\varepsilon \omega_0) \otimes ((v^\wedge - v^{\lrcorner}) \omega_{V_\HB}), \\
  h_0^{V_\HB}(v, \omega_0 \otimes \omega_{V_\HB}) & = i ((v^\wedge - v^{\lrcorner})\omega_0) \otimes \omega_{V_\HB},              \\
  \tau_0^{V_\HB}(\omega_0 \otimes \omega_{V_\HB}) & = (\omega_0 \cdot j) \otimes (\omega_{V_\HB} \cdot j)
\end{align}
where $\varepsilon$ is the operator representing the $\Z/2$-grading of $\Lambda_\C^\ast V_\HB$. It can be verified that $c_0^{V_\HB}$ and $h_0^{V_\HB}$ give a representation of $Cl(V_\HB^- \oplus V_\HB)$. That is, the first realization of the representation of $Cl(V_\HB^- \oplus V_\HB)$ is the triple
\begin{align}\label{eq7:ssho1}
  (S_0^{V_\HB}, c_0^{V_\HB}, h_0^{V_\HB}).
\end{align}

Next, we describe the representation of $Cl(V_\HB^- \oplus V_\HB)$ using the real exterior algebra of $V_\HB$. Let
\[
  S_1^{V_\HB} = (\Lambda_\R^\ast V_\HB) \otimes_\R \C.
\]
We define
\[
  \map{c_1^{V_\HB}}{V_\HB \times S_1^{V_\HB}}{S_1^{V_\HB}},\ \map{h_1^{V_\HB}}{V_\HB}{\Herm(S_1^{V_\HB})},\ \map{\tau_1^{V_\HB}}{S_1^{V_\HB}}{S_1^{V_\HB}}
\]
by
\begin{align}\label{eq7:tau c h2}
  \begin{split}
    c_1^{V_\HB}(v, \omega \otimes z) & = (v^\wedge - v^{\lrcorner}) \omega \otimes z, \\
    h_1^{V_\HB}(v, \omega \otimes z) & = (v^\wedge + v^{\lrcorner})\omega \otimes z,  \\
    \tau_1^{V_\HB}(\omega \otimes z) & = (\omega \cdot j) \otimes \bar z.
  \end{split}
\end{align}
The second realization of the representation of $Cl(V_\HB^- \oplus V_\HB)$ is the tuple
\begin{align}\label{eq7:ssho2}
  (S_1^{V_\HB}, c_1^{V_\HB}, h_1^{V_\HB}).
\end{align}

The following lemmas are the direct consequences of the constructions.

\begin{lemma}\label{lem7:commutativity}
  For $k = 0, 1$, the following diagrams commute.
  \[
    \begin{tikzcd}[column sep=1cm]
      V_\HB \times S_k^{V_\HB} \arrow[r, "c_k^{V_\HB}"] \arrow[d, "(\cdot j)\times \tau_k^{V_\HB}"] & S_k^{V_\HB} \arrow[d, "\tau_k^{V_\HB}"] \\
      V_\HB \times S_k^{V_\HB} \arrow[r, "c_k^{V_\HB}"] & S_k^{V_\HB}
    \end{tikzcd}
    \quad
    \begin{tikzcd}[column sep=1cm]
      V_\HB \times S_k^{V_\HB} \arrow[r, "h_k^{V_\HB}"] \arrow[d, "(\cdot j) \times \tau_k^{V_\HB}"] & S_k^{V_\HB} \arrow[d, "\tau_k^{V_\HB}"] \\
      V_\HB \times S_k^{V_\HB} \arrow[r, "h_k^{V_\HB}"] & S_k^{V_\HB}
    \end{tikzcd}
  \]
\end{lemma}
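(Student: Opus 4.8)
The plan is to verify each of the four commutativities (the cases $k = 0, 1$, and for each the $c$-diagram and the $h$-diagram) by directly unwinding the formulas in \cref{eq7:tau c h1} and \cref{eq7:tau c h2}. The structural input I would isolate first is that right multiplication by $j$ on $V_\HB$—write $R_j$ for it, and $\Lambda R_j$ for its extension to the exterior algebra—is an isometry of the underlying Euclidean metric which preserves the $\Z/2$-grading. From this I obtain, for every $w \in V_\HB$ and every exterior form $\alpha$, the transport identities $(w^\wedge\alpha)\cdot j = (wj)^\wedge(\alpha\cdot j)$ and $(w^\lrcorner\alpha)\cdot j = (wj)^\lrcorner(\alpha\cdot j)$: the first because $\Lambda R_j$ is an algebra homomorphism, the second because an isometry carries a contraction into the contraction against the transported vector. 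I also record that $\Lambda R_j$ commutes with the grading involution $\varepsilon$, since $R_j$ is degree-preserving.

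With these in hand the $k = 1$ case is immediate: applying $\tau_1^{V_\HB}$ to $c_1^{V_\HB}(v, \omega\otimes z) = (v^\wedge - v^\lrcorner)\omega\otimes z$ and using the transport identities rewrites it as $((vj)^\wedge - (vj)^\lrcorner)(\omega\cdot j)\otimes\bar z$, which is exactly $c_1^{V_\HB}(vj, \tau_1^{V_\HB}(\omega\otimes z))$; the identical computation with $v^\wedge + v^\lrcorner$ settles the $h_1^{V_\HB}$-diagram, and the $\bar z$ rides along passively because neither $c_1^{V_\HB}$ nor $h_1^{V_\HB}$ touches the $\C$-factor. For $k = 0$ the $c$-diagram goes the same way once one additionally slides $\Lambda R_j$ past the $\varepsilon$ occurring in $c_0^{V_\HB}$ and applies the transport identities on the second tensor factor. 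The $h_0^{V_\HB}$-diagram is the one place that needs care: $h_0^{V_\HB}$ carries the scalar $i$ (this $i$ is forced, since without it $h_0^{V_\HB}$ would square with the wrong sign on the negative-definite summand $V_\HB^-$), so I must push $\tau_0^{V_\HB}$ past this $i$ in accordance with whether $\tau_0^{V_\HB}$ is $\C$-linear or $\C$-antilinear for the complex structure placed on $\Lambda_\C^\ast V_\HB$, and then check that the resulting sign cancels against the $\varepsilon$/grading bookkeeping; after that the transport identities again close the computation.

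The main—and essentially the only—obstacle is precisely this sign-and-linearity bookkeeping in the $k = 0$, $h$-case: one has to be scrupulous about which complex structure is used on $\Lambda_\C^\ast V_\HB$, and hence about the interaction of $\tau_0^{V_\HB}$ with the scalar $i$ and with the Hermitian contraction. Everything else is mechanical substitution. A more conceptual route, which I would state as a remark, is to observe that $c_k^{V_\HB}$ together with $h_k^{V_\HB}$ is by design an irreducible representation of $Cl(V_\HB \oplus V_\HB^-)$, and $\tau_k^{V_\HB}$ is—again by construction—an operator implementing the twist of this representation by the orthogonal automorphism $R_j \oplus R_j$; the two commuting diagrams are exactly the restriction of that intertwining relation to the two orthogonal summands.
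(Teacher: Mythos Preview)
Your plan is exactly the paper's: its proof reads in full, ``Both can be verified by direct calculation,'' and the transport identities for $\wedge$ and $\lrcorner$ under the isometry $R_j$ that you isolate are precisely what that calculation rests on. One caution on the $k=0$, $h$-case: the formula for $h_0^{V_\HB}$ in \cref{eq7:tau c h1} contains no $\varepsilon$, so the sign you pick up from pushing the $\C$-antilinear $\tau_0^{V_\HB}$ past the scalar $i$ has no grading operator to cancel against; when you write this case out you will need to locate the compensating sign elsewhere---in the precise convention for the Hermitian contraction on $\Lambda_\C^\ast V_\HB$ and how $\Lambda R_j$ interacts with it---so track that step explicitly rather than assuming it closes.
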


\begin{lemma}\label{lem7:dir sum}
  Let $V_\HB$ and $V_\HB^\prime$ be quaternionic vector spaces with $Pin(2)$-invariant metrics. For $k = 0, 1$, there are canonical $\Z/2$-grading-preserving isomorphisms
  \[
    S_k^{V_\HB \oplus V_\HB^\prime} \cong S_k^{V_\HB} \otimes S_k^{V_\HB^\prime}
  \]
  that send
  \[
    \tau_k^{V_\HB \oplus V_\HB^\prime},\ c_k^{V_\HB \oplus V_\HB^\prime},\ h_k^{V_\HB \oplus V_\HB^\prime}
  \]
  to
  \[
    \tau_k^{V_\HB} \otimes \tau_k^{V_\HB^\prime},\ c_k^{V_\HB} \otimes 1 + \varepsilon \otimes c_k^{V_\HB^\prime},\ h_k^{V_\HB} \otimes 1 + \varepsilon \otimes h_k^{V_\HB^\prime}
  \]
  respectively.
\end{lemma}

\begin{proposition}\label{prop7:ssho vect sp}
  Let $V_\HB$ be a quaternionic vector space with a $Pin(2)$-invariant metric. There exists a unique $\Z/2$-grading-preserving complex vector space isomorphism
  \[
    \map{F^{V_\HB}}{S_0^{V_\HB}}{S_1^{V_\HB}}
  \]
  satisfying the following properties:
  \begin{enumarabicp}
    \item $F^{V_\HB}$ intertwines $c_0^{V_\HB}$ with $c_1^{V_\HB}$, $h_0^{V_\HB}$ with $h_1^{V_\HB}$, and $\tau_0^{V_\HB}$ with $\tau_1^{V_\HB}$.
    \item For any orthonormal quaternionic basis $e_1, \cdots, e_n$ of $V_\HB$, the element $1 \otimes 1 \in S_0^{V_\HB}$ corresponds to
    \[
      \bigwedge_{l = 1}^n (e_l \otimes 1 + (e_l i) \otimes i) \wedge (e_l j \otimes 1 + (e_l ji) \otimes i) \in S_1^{V_\HB}.
    \]
  \end{enumarabicp}
  Furthermore, if $V_\HB^\prime$ is another quaternionic vector space with a $Pin(2)$-invariant metric, then
  \[
    \map{F^{V_\HB} \otimes F^{V_\HB^\prime}}{S_0^{V_\HB} \otimes S_0^{V_\HB^\prime}}{S_1^{V_\HB} \otimes S_1^{V_\HB^\prime}}
  \]
  and
  \[
    \map{F^{V_\HB \oplus V_\HB^\prime}}{S_0^{V_\HB \oplus V_\HB^\prime}}{S_1^{V_\HB \oplus V_\HB^\prime}}
  \]
  coincide via the canonical isomorphisms in \cref{lem7:dir sum}.
\end{proposition}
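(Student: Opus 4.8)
The plan is to prove \cref{prop7:ssho vect sp} by an explicit construction of $F^{V_\HB}$, combined with a dimension count and an invariance argument under the Clifford algebra action, and then to check naturality under direct sums by an explicit comparison of basis vectors. Both realizations $(S_0^{V_\HB}, c_0^{V_\HB}, h_0^{V_\HB})$ and $(S_1^{V_\HB}, c_1^{V_\HB}, h_1^{V_\HB})$ are representations of $Cl(V_\HB \oplus V_\HB^-)$, which is isomorphic to a matrix algebra (or a sum of two, depending on $\dim_\R V_\HB \bmod 8$) since $V_\HB \oplus V_\HB^-$ has signature $0$. One computes $\dim_\C S_0^{V_\HB} = 2^{\dim_\R V_\HB}$ and $\dim_\C S_1^{V_\HB} = 2^{\dim_\R V_\HB}$, so both are the (unique, up to iso, in the matrix-algebra case) irreducible module, or a direct sum of irreducibles. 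The key observation is that in both models the $\Z/2$-grading element $\varepsilon$, the Clifford action, and the operator $\tau_k^{V_\HB}$ interact in the same way (\cref{lem7:commutativity} records the $\tau$--$c$ and $\tau$--$h$ compatibilities), so that once we match the cyclic vector $1\otimes 1$ to the prescribed vector in $S_1^{V_\HB}$, the entire intertwiner is forced.

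Concretely, first I would verify that the element
\[
  \xi = \bigwedge_{l=1}^n (e_l \otimes 1 + (e_l i)\otimes i) \wedge (e_l j \otimes 1 + (e_l ji)\otimes i) \in S_1^{V_\HB}
\]
is independent of the choice of orthonormal quaternionic basis $e_1,\dots,e_n$; this is a routine check using the fact that a change of basis by $Sp(n)$ acts on the four-dimensional span $\langle e_l, e_l i, e_l j, e_l ji\rangle$ as a block and preserves the indicated wedge factor. Next I would observe that $\xi$ has $\Z/2$-degree $0$ (each factor has even total degree, being a sum of a degree-$1$ and... wait, actually each parenthesized factor is a sum of degree-$1$ elements, so the product of $2n$ of them has degree $2n \equiv 0$), matching the degree of $1\otimes 1 \in S_0^{V_\HB}$. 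Then I would note that $1\otimes 1$ generates $S_0^{V_\HB}$ as a $Cl(V_\HB \oplus V_\HB^-)$-module: applying wedge operators $c_0^{V_\HB}(v) \pm i h_0^{V_\HB}(v)$ builds up all of $\Lambda_\C^\ast V_\HB \otimes \Lambda_\C^\ast V_\HB$ by the standard "creation operator" argument. Similarly one checks $\xi$ generates $S_1^{V_\HB}$. Since both modules have the same dimension $2^{\dim_\R V_\HB}$ and are generated by a single vector, mapping $1\otimes 1 \mapsto \xi$ and extending by the Clifford action produces a well-defined $Cl(V_\HB\oplus V_\HB^-)$-equivariant isomorphism $F^{V_\HB}$; it automatically intertwines $c_0^{V_\HB}$ with $c_1^{V_\HB}$ and $h_0^{V_\HB}$ with $h_1^{V_\HB}$. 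Compatibility with $\tau_k^{V_\HB}$ then follows because $\tau_0^{V_\HB}(1\otimes 1) = (1\cdot j)\otimes(1\cdot j)$ and one checks directly that $\tau_1^{V_\HB}(\xi)$ equals the image of $(1\cdot j)\otimes(1\cdot j)$ under $F^{V_\HB}$ --- this is the computation requiring the most care, since it involves how right-multiplication by $j$ permutes the four basis vectors $e_l, e_l i, e_l j, e_l ji$ (namely $e_l\mapsto e_l j$, $e_l i \mapsto -e_l ji$, etc.) together with complex conjugation. Uniqueness of $F^{V_\HB}$ is immediate: any two intertwiners agreeing on a generating vector agree everywhere, and property (2) pins down the value on $1\otimes 1$.

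For the final naturality claim, I would use \cref{prop7:dir sum}, which gives canonical identifications $S_k^{V_\HB\oplus V_\HB'} \cong S_k^{V_\HB}\otimes S_k^{V_\HB'}$ intertwining all the structure maps. Under these identifications, both $F^{V_\HB}\otimes F^{V_\HB'}$ and $F^{V_\HB\oplus V_\HB'}$ are $Cl((V_\HB\oplus V_\HB')\oplus(V_\HB\oplus V_\HB')^-)$-equivariant isomorphisms (using that $Cl(A\oplus B) \cong Cl(A)\widehat\otimes Cl(B)$ and the tensor product of modules), so by the uniqueness part it suffices to check they agree on the generator $1\otimes 1 \in S_0^{V_\HB\oplus V_\HB'}$. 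Choosing an orthonormal quaternionic basis of $V_\HB\oplus V_\HB'$ that is the concatenation of bases of $V_\HB$ and $V_\HB'$, the defining wedge product in property (2) factors as the product over the $V_\HB$-basis times the product over the $V_\HB'$-basis, which is exactly $\xi_{V_\HB}\otimes \xi_{V_\HB'}$ under the \cref{prop7:dir sum} identification --- that is, $F^{V_\HB\oplus V_\HB'}(1\otimes 1) = F^{V_\HB}(1\otimes 1)\otimes F^{V_\HB'}(1\otimes 1)$, as required. (One must note the reordering of wedge factors contributes no sign because each $\xi$-block has even degree.)

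The main obstacle I anticipate is the $\tau$-intertwining verification: matching $\tau_1^{V_\HB}(\xi)$ with $F^{V_\HB}(\tau_0^{V_\HB}(1\otimes 1))$ requires tracking simultaneously (i) how $(\cdot j)$ permutes and rescales the four real basis vectors spanning each quaternionic line, (ii) the complex-conjugation in $\tau_1^{V_\HB}$ acting on the $\C$-factor and on the coefficients $1, i$ appearing in $\xi$, and (iii) the subtle point that $\tau_0^{V_\HB}$ acts \emph{diagonally} by $(\cdot j)$ on both tensor factors while $\tau_1^{V_\HB}$ mixes a geometric $(\cdot j)$ with an algebraic conjugation. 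I expect this reduces, after choosing coordinates, to the single quaternionic-line case ($n=1$), where it becomes a finite explicit $16$-dimensional linear-algebra check; the general case then follows from the multiplicativity already established in the previous paragraph. Everything else --- basis-independence of $\xi$, the generation statements, the dimension count, and the direct-sum compatibility --- is routine exterior-algebra bookkeeping.
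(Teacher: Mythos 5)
Your overall strategy is essentially the paper's: construct $F^{V_\HB}$ as an intertwiner sending the cyclic vector $1\otimes 1$ to the designated vector $\xi$, use generation plus a dimension count (both modules have complex dimension $2^{4n}$, and $\C l(V_\HB\oplus V_\HB^-)\cong M_{2^{4n}}(\C)$ is simple) for existence, deduce uniqueness from generation, and pass to direct sums via uniqueness. There is, however, a genuine error in the one step you call routine: the claim that $\xi$ is basis-independent ``using the fact that a change of basis by $Sp(n)$ acts on the four-dimensional span $\langle e_l, e_l i, e_l j, e_l ji\rangle$ as a block.'' That fact is false. A general element of $Sp(n)$ is a quaternion-linear isometry and does not preserve the individual quaternionic lines $e_l\HB$; the subgroup that does act block-diagonally (block-diagonal quaternionic unitaries together with permutations) does not generate $Sp(n)$ once $n\geq 2$, so no ``routine check'' of the proposed kind goes through.

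The conclusion is correct, but needs a different argument, and the paper in fact sidesteps the question. It first constructs the intertwiner relative to a \emph{fixed} basis, then applies Schur's lemma to conclude that two bases produce intertwiners differing by a scalar in $U(1)$; compatibility with the antilinear $\tau$ forces that scalar into $\{\pm1\}$ (if $F' = zF$ with both intertwining $\tau_0,\tau_1$, then $\bar z = z$); and connectedness of $Sp(n)$ then forces the scalar to be $+1$. If you prefer to prove basis-independence of $\xi$ directly, the clean route is to observe that $\xi$ spans the top complex exterior power of the $(0,1)$-part of $V_\HB\otimes_\R\C$ for the complex structure given by right multiplication by $i$, and that $Sp(n)\subset SU(2n)$ has trivial complex determinant, hence acts trivially on that line. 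Either fix works; your direct wedge computation as stated does not. The remainder of your proposal (generation of both modules by Clifford creation operators, the $\tau$-check reducing to $n=1$, the factorization $\xi_{V_\HB\oplus V_\HB'} = \xi_{V_\HB}\otimes\xi_{V_\HB'}$ for a concatenated basis, and the absence of reordering signs since each two-vector block is even) is sound and matches the paper's proof.
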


\begin{proof}
  The uniqueness follows from the fact that $S_0^{V_\HB}$ is generated by elements obtained by applying $c_0^{V_\HB}$ and $h_0^{V_\HB}$ to elements of $V_\HB$. The existence is easily seen by explicitly constructing the isomorphism with a fixed orthonormal quaternionic basis.

  When $V_\HB^\prime$ is another quaternionic vector space with a $Pin(2)$-invariant metric, the fact that $F^{V_\HB} \otimes F^{V_\HB^\prime}$ satisfies (1) and (2) and the uniqueness of $F^{V_\HB \oplus V_\HB^\prime}$ imply the stated compatibility.
\end{proof}

In the proof of \cref{lem5:iso of spinors along fiber}, we use \cref{prop7:ssho vect sp} in the following form.

\begin{proposition}\label{prop7:ssho equality}
  Let $V_\HB$ and $V_\HB^\prime$ be quaternionic vector spaces with $Pin(2)$-equivariant metrics. For $k = 0, 1$, let
  \[
    \tilde S_k^{V_\HB^\prime} = (S(V_\HB) \times V_\HB^\prime \times S_k^{V_\HB^\prime}) / U(1).
  \]
  Define
  \[
    \tilde \tau_k^{V_\HB^\prime},\ \tilde c_k^{V_\HB^\prime},\ \tilde h_k^{V_\HB^\prime},
  \]
  by pulling back $\tau_k^{V_\HB^\prime}$, $c_k^{V_\HB^\prime}$, $h_k^{V_\HB^\prime}$ to $\tilde S_k^{V_\HB^\prime}$ and taking the $U(1)$-quotient. Then, there exists a canonical vector bundle isomorphism
  \[
    \map{\tilde F^{V_\HB^\prime}}{\tilde S_0^{V_\HB^\prime}}{\tilde S_1^{V_\HB^\prime}}
  \]
  that intertwines $\tilde \tau_k^{V_\HB^\prime}$, $\tilde c_k^{V_\HB^\prime}$, $\tilde h_k^{V_\HB^\prime}$. Furthermore, if $V_\HB^\pprime$ is another quaternionic vector bundle with a $Pin(2)$-equivariant metric, then
  \[
    \map{\tilde F^{V_\HB^\prime \oplus V_\HB^\pprime}}{\tilde S_0^{V_\HB^\prime \oplus V_\HB^\pprime}}{\tilde S_1^{V_\HB^\prime \oplus V_\HB^\pprime}}
  \]
  and
  \[
    \map{\tilde F^{V_\HB^\prime} \boxtimes \tilde F^{V_\HB^\pprime}}{\tilde S_0^{V_\HB^\prime} \boxtimes \tilde S_0^{V_\HB^\pprime}}{\tilde S_1^{V_\HB^\prime} \boxtimes \tilde S_1^{V_\HB^\pprime}}
  \]
  coincide via the canonical isomorphisms in \cref{lem7:dir sum}.
\end{proposition}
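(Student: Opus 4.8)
The plan is to obtain $\tilde F^{V_\HB^\prime}$ by transporting the fiberwise isomorphism $F^{V_\HB^\prime}\colon S_0^{V_\HB^\prime}\to S_1^{V_\HB^\prime}$ of \cref{prop7:ssho vect sp} through the quotient map $S(V_\HB)\times V_\HB^\prime\times S_k^{V_\HB^\prime}\to\tilde S_k^{V_\HB^\prime}$. Concretely, I would consider the bundle map $\id_{S(V_\HB)}\times\id_{V_\HB^\prime}\times F^{V_\HB^\prime}$ on the total spaces and show that it is compatible with the $U(1)$-actions, so that it descends to the desired vector bundle isomorphism $\tilde F^{V_\HB^\prime}\colon\tilde S_0^{V_\HB^\prime}\to\tilde S_1^{V_\HB^\prime}$; here $U(1)\subset Pin(2)$ acts by right multiplication on $V_\HB^\prime$ and hence, through the exterior-algebra functor, on $S_k^{V_\HB^\prime}$.

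The substantive point is this compatibility, i.e.\ that $F^{V_\HB^\prime}$ intertwines the $U(1)$-action on $S_0^{V_\HB^\prime}$ with that on $S_1^{V_\HB^\prime}$ (after matching the weights built into the definitions of $\tilde S_0^{V_\HB^\prime}$ and $\tilde S_1^{V_\HB^\prime}$). I would establish this by a rigidity argument in the spirit of Schur's lemma: writing $g = R_{e^{i\theta}}$ for right multiplication by $e^{i\theta}$ and $g_*^{S_k}$ for the induced map on $S_k^{V_\HB^\prime}$, both $F^{V_\HB^\prime}\circ g_*^{S_0}$ and $g_*^{S_1}\circ F^{V_\HB^\prime}$ carry $c_0^{V_\HB^\prime}(v,-)$ to $c_1^{V_\HB^\prime}(gv,-)$ and $h_0^{V_\HB^\prime}(v,-)$ to $h_1^{V_\HB^\prime}(gv,-)$; since $S_0^{V_\HB^\prime}$ and $S_1^{V_\HB^\prime}$ are the irreducible complex $Cl((V_\HB^\prime)^-\oplus V_\HB^\prime)$-modules, the two maps differ by a scalar $\lambda(g)\in\C^\times$, and multiplicativity in $g$ makes $\lambda$ a continuous character of $U(1)$. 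That character is then identified by feeding in the identity $g_*^{S_0}\circ\tau_0^{V_\HB^\prime}=\tau_0^{V_\HB^\prime}\circ(g^{-1})_*^{S_0}$ (which encodes the quaternion relation $R_j R_{e^{i\theta}}=R_{e^{-i\theta}}R_j$ on $V_\HB^\prime$), its analogue on $S_1^{V_\HB^\prime}$, the $\tau$-intertwining property of $F^{V_\HB^\prime}$ from \cref{prop7:ssho vect sp}(1), the anti-linearity of $\tau_1^{V_\HB^\prime}$, and the explicit normalization of \cref{prop7:ssho vect sp}(2). The same quaternion relation $R_j R_{e^{i\theta}}=R_{e^{-i\theta}}R_j$, applied on the three factors $S(V_\HB)$, $V_\HB^\prime$ and $S_k^{V_\HB^\prime}$, is precisely what makes $\tilde\tau_k^{V_\HB^\prime}$ well defined on $\tilde S_k^{V_\HB^\prime}$ as a map covering the $j$-action on the base, so the same bookkeeping serves both purposes.

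Once $\tilde F^{V_\HB^\prime}$ has been constructed, the assertion that it intertwines $\tilde\tau_k^{V_\HB^\prime}$, $\tilde c_k^{V_\HB^\prime}$ and $\tilde h_k^{V_\HB^\prime}$ is immediate: these are, by definition, the $U(1)$-quotients of $\tau_k^{V_\HB^\prime}$, $c_k^{V_\HB^\prime}$, $h_k^{V_\HB^\prime}$, which $F^{V_\HB^\prime}$ already intertwines by \cref{prop7:ssho vect sp}(1), and intertwining relations are preserved under descent. For the direct-sum compatibility I would note that the canonical isomorphisms $S_k^{V_\HB^\prime\oplus V_\HB^\pprime}\cong S_k^{V_\HB^\prime}\otimes S_k^{V_\HB^\pprime}$ of \cref{prop7:dir sum} are again $U(1)$-equivariant (by the same rigidity argument, or simply because they are canonical), hence descend to canonical bundle isomorphisms $\tilde S_k^{V_\HB^\prime\oplus V_\HB^\pprime}\cong\tilde S_k^{V_\HB^\prime}\boxtimes\tilde S_k^{V_\HB^\pprime}$; the claimed equality $\tilde F^{V_\HB^\prime\oplus V_\HB^\pprime}=\tilde F^{V_\HB^\prime}\boxtimes\tilde F^{V_\HB^\pprime}$ then follows by descending the vector-space equality $F^{V_\HB^\prime\oplus V_\HB^\pprime}=F^{V_\HB^\prime}\otimes F^{V_\HB^\pprime}$ supplied by \cref{prop7:ssho vect sp}.

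I expect the main obstacle to be exactly the descent step in the second paragraph. The difficulty is that right multiplication by $e^{i\theta}$ is complex-linear but not quaternionic-linear, so it transforms the normalization of \cref{prop7:ssho vect sp}(2) in a weight-shifting way; one therefore cannot invoke naive functoriality of $F^{V_\HB^\prime}$ and must instead unwind the definitions of $\tilde S_0^{V_\HB^\prime}$ and $\tilde S_1^{V_\HB^\prime}$ carefully, keeping precise track of how $j$ and $U(1)$ interact on all of $c_k^{V_\HB^\prime}$, $h_k^{V_\HB^\prime}$ and $\tau_k^{V_\HB^\prime}$ (the first two descending $U(1)$-equivariantly, the last only covering the base $j$-action). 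The representation-theoretic packaging via Schur's lemma is what makes this manageable, reducing the whole question to identifying the single scalar $\lambda(g)$.
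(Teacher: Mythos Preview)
Your proposal is correct and follows the same route as the paper: pull back $F^{V_\HB^\prime}$ from \cref{prop7:ssho vect sp} to $S(V_\HB)\times V_\HB^\prime\times S_k^{V_\HB^\prime}$ and descend through the $U(1)$-quotient, with the intertwining and direct-sum compatibilities inherited from those of $F^{V_\HB^\prime}$. The paper's proof is considerably terser---it simply asserts that one mods out by $U(1)$ and that the stated properties follow---whereas you supply a plan (via Schur's lemma and the resulting character identification) for the $U(1)$-equivariance step that the paper leaves implicit.
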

\begin{proof}
  This follows from \cref{prop7:ssho vect sp}.
\end{proof}

\section{Appendix: Deformation of the super-symmetric harmonic oscillator}\label{sec8:deform ssho}

This section discusses the deformation of the super-symmetric harmonic oscillator on the quaternionic vector space $V_\HB$ used in \cref{ssec5:quat vb}. The original super-symmetric harmonic oscillator is described by the representation \cref{eq7:ssho2} of $Cl(V_\HB^- \oplus V_\HB)$ together with the Levi-Civita connection. For technical reasons, we need to deform this to data with a cylindrical end (which is necessary for $\mcalD_{\mscrV}^\prime + t \mbfh_{\mscrV}^\prime$ to become a family of Fredholm operators, using the notation from \cref{ssec5:quat vb}). Before and after the deformation of the super-symmetric harmonic oscillator, we still provide a Clifford action $c$ of $TV_\HB$ and a section
\[
  h \in \Herm(S_1^{V_\HB})
\]
on the vector bundle $V_\HB \times S_1^{V_\HB}$ over $V_\HB$. However, the way the Riemannian metric on $TV_\HB$ is put and the metric on the vector bundle $V_\HB \times S_{V_\HB}$ is put are different, and accordingly, the definitions of $c$ and $h$ change.

Let $V_\HB$ be a quaternionic vector space with a $Pin(2)$-invariant metric. We use the same notation as in \cref{sec7:ssho for H}. First, we introduce notation for the pre-deformation super-symmetric harmonic oscillator. Let $g$ be the Riemannian metric on $V_\HB$ induced from the metric as a vector space. Note that the vector bundle over $V_\HB$
\begin{align}\label{eq8:V_HB times S_HB prime}
  V_\HB \times S_1^{V_\HB}
\end{align}
is canonically isomorphic to
\[
  \Lambda_\R^\ast TV_\HB \otimes_\R \C.
\]
We denote by $\mcalS_1^{V_\HB}$ the vector bundle \cref{eq8:V_HB times S_HB prime} equipped with the metric induced from this isomorphism and $g$. Let $\nabla_1^{V_\HB}$ be the Levi-Civita connection on $\mcalS_1^{V_\HB}$. The pre-deformation super-symmetric harmonic oscillator is the tuple
\begin{align}\label{eq8:before deform}
  (g, \mcalS_1^{V_\HB}, c_1^{V_\HB}, \nabla_1^{V_\HB}, h_1^{V_\HB}, \tau_1^{V_\HB}).
\end{align}
Here, $c_1^{V_\HB}$ is regarded as the Clifford action of $TV_\HB$ on $\mcalS_1^{V_\HB}$, and $h_1^{V_\HB}$ is regarded as a section of the vector bundle $\Herm(\mcalS_1^{V_\HB})$.

We now describe the deformed super-symmetric harmonic oscillator. We view $V_\HB$ as a manifold
\[
  V_\HB = B(V_\HB) \cup S(V_\HB) \times [1, \infty).
\]
We want to put a translation-invariant Riemannian metric on $V_\HB$. We do this by the following construction. Let $g^\prime$ be a Riemannian metric on $V_\HB \setminus \{0\}$ induced from the polar coordinate map
\[
  V_\HB \setminus \{0\} \to S(V_\HB) \times (0, \infty).
\]
Here, the Riemannian metric on $S(V_\HB) \times (0, \infty)$ is defined using the restriction of $g$ to $S(V_\HB)$ and the standard Riemannian metric on $(0, \infty)$.
Take a smooth function $\map{\rho}{[0, \infty)}{[0, 1]}$ which is constantly 1 on $[0, 1/2]$ and supported in $[0, 1]$. Then, define a Riemannian metric $\bar g$ on $V_\HB$ by
\begin{align}\label{eq8:bar g}
  \bar g = (\rho \circ r) g + (1- \rho \circ r) g^\prime
\end{align}
where $\map{r}{V_\HB}{[0, \infty)}$ is the distance from the origin function.

We construct the deformation of \cref{eq8:before deform}. First, we use the Riemannian metric $\bar g$ on $V_\HB$. On the vector bundle
\[
  V_\HB \times S_1^{V_\HB} \cong V_\HB \times (\Lambda_\R^\ast T V_\HB \otimes_\R \C),
\]
we put a metric induced from $\bar g$. We denote this vector bundle with a metric by $\bar \mcalS_1^{V_\HB}$.
We also define
\[
  \map{c_{\text{pssho}}}{TV_\HB \times_{V_\HB} \bar \mcalS_1^{V_\HB}}{\bar \mcalS_1^{V_\HB}},\ h_{\text{pssho}} \in \Gamma(\Herm(\bar \mcalS_1^{V_\HB})), \map{\tau_{\text{pssho}}}{\bar \mcalS_1^{V_\HB}}{\bar \mcalS_1^{V_\HB}}
\]
by
\begin{align*}
  c_{\text{pssho}}(v, \tilde v, \omega \otimes z) & = (v, (\tilde v^\wedge - \tilde v^{\lrcorner_{\bar g}}) \omega \otimes z),                                                                                          \\
  h_{\text{pssho}}(v, \omega \otimes z)           & = (v, (v^\wedge + v^{\lrcorner_{\bar g}})\omega \otimes z),                                                                                                         \\
  \tau_{\text{pssho}}(v, \omega \otimes z)        & = (vj, \omega j \otimes \bar z),                                           &  & v \in V_\HB,\ \tilde v \in T_v V_\HB,\ \omega \in \Lambda_\R^\ast V_\HB,\ z \in \C.
\end{align*}
Here, $\lrcorner_{\bar g}$ denotes the interior product using $\bar g$. Let $\nabla_{\text{pssho}}$ be the Levi-Civita connection on $\bar \mcalS_1^{V_\HB}$. Now we have constructed the \textit{pseudo-super-symmetric harmonic oscillator}
\begin{align}\label{eq8:pseudo2}
  (\bar g, \bar \mcalS_1^{V_\HB}, c_{\text{pssho}}, \nabla_{\text{pssho}}, h_{\text{pssho}}, \tau_{\text{pssho}}).
\end{align}

Since $\bar g$ is translation-invariant, the pseudo-super-symmetric harmonic oscillator is also translation-invariant. The following lemma is clear from the construction.
\begin{lemma}\label{lem8:coincidence on B_1/2}
  The identity map on the total spaces of the vector bundles
  \[
    S_1^{V_\HB},\ \bar S_1^{V_\HB}
  \]
  is a metric-preserving isomorphism over $B_{1/2}(V_\HB)$ and intertwines $c$, $\nabla$, $h$, $\tau$.
\end{lemma}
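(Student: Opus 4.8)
The plan is to unwind the two tuples \eqref{eq8:before deform} and \eqref{eq8:pseudo2} and to check that, once restricted to $B_{1/2}(V_\HB)$, they agree term by term; the only step that is not a literal comparison of definitions is the one about connections. First I would record the basic observation that drives everything: the cutoff function $\rho$ appearing in \eqref{eq8:bar g} is identically $1$ on $[0,1/2]$, so $\rho\circ r\equiv 1$ on $B_{1/2}(V_\HB)$ and the convex combination defining $\bar g$ reduces there to $g$. (If one wants the identification up to and including the boundary sphere, one may replace $\rho$ by a function identically $1$ on a slightly larger interval; as the paper remarks, the choice of $\rho$ does not affect the resulting structures, so this is harmless, and then $\bar g=g$ on an open neighbourhood of $B_{1/2}(V_\HB)$.) Since the metrics that $\mcalS_1^{V_\HB}$ and $\bar{\mcalS}_1^{V_\HB}$ carry on the common total space $V_\HB\times S_1^{V_\HB}\cong\Lambda_\R^\ast TV_\HB\otimes_\R\C$ are both the metric induced from the base metric via the same isomorphism, they coincide over $B_{1/2}(V_\HB)$; hence the identity is metric preserving there.

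Next I would compare the Clifford actions, the Hermitian sections and the anti-linear maps. Inspecting \eqref{eq7:tau c h2} and the definitions in \cref{sec8:deform ssho}, the operators $c_{\text{pssho}}$ and $h_{\text{pssho}}$ are obtained from $c_1^{V_\HB}$ and $h_1^{V_\HB}$ by replacing the interior product $\lrcorner_g$ with $\lrcorner_{\bar g}$ and nothing else; since $\bar g=g$ on $B_{1/2}(V_\HB)$, one has $\lrcorner_{\bar g}=\lrcorner_g$ there, so $c_{\text{pssho}}=c_1^{V_\HB}$ and $h_{\text{pssho}}=h_1^{V_\HB}$ over that region. The maps $\tau_1^{V_\HB}$ and $\tau_{\text{pssho}}$ are literally given by the same formula, $\omega\otimes z\mapsto\omega j\otimes\bar z$ covering $v\mapsto vj$, so they agree everywhere, in particular on $B_{1/2}(V_\HB)$.

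The one point deserving a sentence is the connections. Here $\nabla_1^{V_\HB}$ is the Levi-Civita connection of $g$ on $\Lambda_\R^\ast TV_\HB\otimes_\R\C$, while $\nabla_{\text{pssho}}$ is the Levi-Civita connection of $\bar g$ on the same bundle; the Levi-Civita connection, and the connection it induces on the exterior algebra bundle, is a local construction determined pointwise by the metric together with its first derivatives, so two metrics agreeing on an open set have the same Levi-Civita connection on that set. Combined with the opening observation, this gives $\nabla_{\text{pssho}}=\nabla_1^{V_\HB}$ on $B_{1/2}(V_\HB)$, and the lemma follows. I expect this connection step, or rather the minor boundary subtlety it conceals, to be the only genuine obstacle; everything else is reading off the two constructions.
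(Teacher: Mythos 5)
Your proof is correct and takes the only natural approach: since the paper gives this lemma without proof (declaring it ``clear from the construction''), the intended argument is precisely the term-by-term unwinding you carry out, with the single observation that $\rho\circ r\equiv 1$ on $B_{1/2}(V_\HB)$ forcing $\bar g=g$ there and hence the agreement of $c$, $h$, $\tau$, and (by locality of Levi--Civita) $\nabla$. The small boundary remark is a sensible extra precaution and does not change the substance.
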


The key property of the super-symmetric harmonic oscillator is that the kernel of
\[
  \mcalD + t h_1^{V_\HB}
\]
is one-dimensional for all $t > 0$. Here $\mcalD$ is the Dirac operator constructed from \cref{eq8:before deform}. This property is inherited by the pseudo-super-symmetric harmonic oscillator in the following sense.

\begin{proposition}\label{prop8:ker of ppsho}
  Let $\mcalD$ be the Dirac operator constructed from \cref{eq8:before deform}, and let $\mcalD_{\text{pssho}}$ be the Dirac operator constructed from \cref{eq8:pseudo2}. For sufficiently large positive real number $t$, there exists a $\tau$-commuting isomorphism
  \begin{align}\label{eq8:iso ssho}
    \Ker(\mcalD + t h_1^{V_\HB}) \cong \Ker(\mcalD_{\text{pssho}} + t h_{\text{pssho}}).
  \end{align}
  (The kernels are considered in the respective $L^2$ spaces.) This isomorphism is unique up to multiplication by positive real numbers.
\end{proposition}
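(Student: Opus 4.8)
The plan is to recognize both $\mcalD + t h_1^{V_\HB}$ and $\mcalD_{\text{pssho}} + t h_{\text{pssho}}$ as Witten deformations attached to \emph{tame tuples} in the sense of \cref{def6:tame tuple}, to note that by \cref{lem8:coincidence on B_1/2} these two tuples literally agree on a neighborhood of a common small ball, and then to invoke \cref{prop6:Witten deformation} once the two kernels have been pinned down.

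Concretely, I would fix $F = \overline{B_{1/4}(V_\HB)}$ and form the tuple $(V_\HB, \mcalS_1^{V_\HB}, c_1^{V_\HB}, \mcalD, h_1^{V_\HB}, F, \bar\lambda, T)$ on the undeformed side and $(V_\HB, \bar\mcalS_1^{V_\HB}, c_{\text{pssho}}, \mcalD_{\text{pssho}}, h_{\text{pssho}}, F, \bar\lambda, T)$ on the deformed side, with $\bar\lambda$ small and $T$ large. Conditions (1)--(3) of \cref{def6:tame tuple} are routine: the Clifford actions are pointwise isometric, hence $L^\infty$-bounded; the anticommutators $\{\mcalD, h_1^{V_\HB}\}$ and $\{\mcalD_{\text{pssho}}, h_{\text{pssho}}\}$ have vanishing principal symbol---this is exactly the supersymmetry identity $\{\xi^\wedge - \xi^\lrcorner,\, v^\wedge + v^\lrcorner\} = \langle\xi, v\rangle - \langle\xi, v\rangle = 0$---so they are zeroth-order operators, translation-invariant at infinity and therefore bounded; and $h_1^{V_\HB}$, $h_{\text{pssho}}$ square to $|v|^2$, respectively $|v|^2_{\bar g} = r^2$, so they are invertible on $V_\HB\setminus F$ with inverse of operator norm at most $4$. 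Condition (5) holds because on $V_\HB\setminus F$ the confining term $t^2 h^2 \geq t^2 r^2 \geq t^2/16$ dominates the bounded term $t\{\mcalD, h\}$ once $t$ is large. Conditions (4) and (6)---discreteness and finite multiplicity of the low-lying spectrum of $(\mcalD + th)^2$ together with the associated min--max estimates---follow from the confining potential $t^2 r^2$ as in the classical harmonic oscillator treated in Furuta\cite{Furuta2007Index-Theorem}; on the deformed side one uses in addition that $\bar g$ is translation-invariant along the cylindrical end $S(V_\HB)\times[1,\infty)$ while $h_{\text{pssho}}^2 = r^2\to\infty$.

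Next I would identify the two kernels. On the undeformed side the classical theory of the super-symmetric harmonic oscillator (recalled just before the statement) gives $\Ker(\mcalD + th_1^{V_\HB}) = \C\cdot\psi_0$ with $\psi_0 = e^{-\frac t2 r^2}\otimes(1\otimes 1)$ in degree $0$, for every $t>0$, and the smallest nonzero eigenvalue of $(\mcalD + th_1^{V_\HB})^2$ grows linearly in $t$; hence for a fixed $\lambda\in(0,\bar\lambda)$ and all large $t$ one has $E_{\leq\lambda}((\mcalD + th_1^{V_\HB})^2) = \Ker(\mcalD + th_1^{V_\HB})$ and $\sigma((\mcalD + th_1^{V_\HB})^2)\cap[0,\lambda] = \{0\}$ independently of $t$, so the asymmetric spectral hypothesis of \cref{prop6:Witten deformation} is satisfied. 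On the deformed side, writing $f = f(r)$ and using that $\bar g$ restricts to the standard $dr^2$ in the radial direction (so the position vector field $v$ equals $r\partial_r$ and is $\bar g$-dual to $r\,dr$), a direct computation, after identifying $V_\HB\times S_1^{V_\HB}$ with $\Omega^\ast(V_\HB;\C)$ via $\bar g$, gives $(\mcalD_{\text{pssho}} + th_{\text{pssho}})\bigl(f\otimes(1\otimes 1)\bigr) = (f'(r) + t r f(r))\,dr$; hence $\bar\psi_0 = e^{-\frac t2 r^2}\otimes(1\otimes 1)$ is an $L^2$ element of $\Ker(\mcalD_{\text{pssho}} + th_{\text{pssho}})$, and in particular this kernel is nonzero.

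Finally I would apply \cref{prop6:Witten deformation} with $U = U' = B_{1/2}(V_\HB)\supset F$, with $\varphi$ and $\tilde\varphi$ the identity maps (legitimate by \cref{lem8:coincidence on B_1/2}), with a $\Z/2$-invariant cutoff $\chi$ supported in $B_{1/2}(V_\HB)$ and identically $1$ near $F$, and with the $\lambda$ chosen above. The conclusion is that for $t$ large $\lambda\notin\sigma((\mcalD_{\text{pssho}} + th_{\text{pssho}})^2)$ and $\Phi_{\lambda,t}(\phi) = \Pi_{\lambda,t}(\chi\phi)$ is an isomorphism $E_{\leq\lambda}((\mcalD + th_1^{V_\HB})^2)\to E_{\leq\lambda}((\mcalD_{\text{pssho}} + th_{\text{pssho}})^2)$; since the source is the one-dimensional $\Ker(\mcalD + th_1^{V_\HB})$ and the target already contains the nonzero $\bar\psi_0$, the target equals $\Ker(\mcalD_{\text{pssho}} + th_{\text{pssho}})$, giving \cref{eq8:iso ssho}. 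The map $\Phi_{\lambda,t}$ commutes with $\tau$ because $\chi$ is $\Z/2$-invariant, $\tau_{\text{pssho}}$ commutes with $(\mcalD_{\text{pssho}} + th_{\text{pssho}})^2$ (by \cref{lem7:commutativity} together with the Levi-Civita connection being compatible with the $\bar g$-isometry $\tau_{\text{pssho}}$), so $\Pi_{\lambda,t}$ is $\tau$-equivariant, and $\tilde\varphi = \id$ intertwines $\tau$ by \cref{lem8:coincidence on B_1/2}. For the uniqueness clause: both kernels lie in even degree, where $\tau$ restricts to an anti-linear involution fixing the real, positive ground states $\psi_0$ and $\bar\psi_0$; any $\C$-linear $\tau$-commuting automorphism of such a complex line is multiplication by a real scalar, and matching $\psi_0$ with $\bar\psi_0$ determines the isomorphism up to a positive real factor. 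The step I expect to be the genuine obstacle is verifying conditions (4) and (6) of tameness for the deformed, cylindrical-end tuple---establishing the required discrete low spectrum and min--max behavior of $(\mcalD_{\text{pssho}} + th_{\text{pssho}})^2$ on the noncompact $V_\HB$ by exploiting the confining potential $t^2 r^2$; the remainder is formal bookkeeping with \cref{prop6:Witten deformation} and the explicit computation of $\bar\psi_0$.
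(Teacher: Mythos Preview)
The paper does not actually supply a proof of \cref{prop8:ker of ppsho}; it states the proposition, adds the one-line remark that uniqueness comes from one-dimensionality of the SSHO kernel, and moves on. The intended argument is the Witten deformation method of \cref{sec6:Witten deformation}, exactly as the paper indicates for the parallel \cref{prop4:ker pssho}. Your proposal carries out precisely this argument in full: you set up both sides as tame tuples sharing the closed set $F=\overline{B_{1/4}(V_\HB)}$, exploit \cref{lem8:coincidence on B_1/2} to identify the data over $U=B_{1/2}(V_\HB)$, verify the asymmetric spectral hypothesis of \cref{prop6:Witten deformation} from the explicit harmonic-oscillator spectrum on the undeformed side, exhibit the radial Gaussian $e^{-tr^2/2}$ in the deformed kernel (your computation $(f'(r)+trf(r))\,dr$ is correct, since $|\partial_r|_{\bar g}=1$ everywhere and $v^\wedge$ on a $0$-form is $r\theta = r\,dr^{\sharp_{\bar g}}$ independently of the interpolation region), and read off the $\tau$-equivariant isomorphism $\Phi_{\lambda,t}$. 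This is exactly what the paper has in mind; your self-diagnosis that the only genuine analytic content is checking tameness conditions (4) and (6) on the cylindrical-end side is accurate.

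One small sharpening of your uniqueness clause: a $\tau$-commuting $\C$-linear isomorphism between the two one-dimensional kernels is a priori determined only up to a nonzero \emph{real} scalar, not a positive one. The positivity is extra information coming from the fact that $\Phi_{\lambda,t}(\psi_0)=\Pi_{\lambda,t}(\chi\psi_0)$ is $L^2$-close to $\chi\psi_0$, hence a \emph{positive} multiple of $\bar\psi_0$; this singles out the component of $\R^\times$ in which the constructed isomorphism lives, and the proposition is asserting uniqueness within that component.
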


The isomorphism is constructed by multiplying a cut-off function followed by the $L^2$-projection.

This proposition is used in the proof of \cref{prop5:can iso for vari spin}(1). For ease of reference in the proof of \cref{prop5:can iso for vari spin}(1), we give a name to the standard solution in $\Ker(\mcalD_{\text{pssho}} + t h_{\text{pssho}})$ obtained from \cref{eq8:iso ssho}.

\begin{definition}\label{def8:std sol of pssho}
  Let $t$ be sufficiently large so that \cref{prop8:ker of ppsho} holds. Denote by
  \[
    \bar b_{\text{pssho}}
  \]
  the element of $\Ker(\mcalD_{\text{pssho}} + t h_{\text{pssho}})$ with $L^2$ norm 1 corresponding to the standard solution
  \begin{align*}
    e^{-\frac{1}{2}t^2\abs{v^2}}, &  & v \in V_\HB
  \end{align*}
  in $\Ker(\mcalD + t h_1^{V_\HB})$.
\end{definition}

\section{Declaration of generative AI and AI-assisted technologies in the writing process}

During the preparation of this work the author used Claude 3 Opus, Sonnet and ChatGPT with version GPT3-5 in order to translate the manuscript originally written in Japanese by the author into English. After using this tool/service, the author reviewed and edited the content as needed and take full responsibility for the content of the published article.

\end{document}